 \def\al{\alpha}
 \def\be{\beta}
 \def\de{\delta}
 \def\eps{\varepsilon}
 \def\ga{\gamma}
 \def\om{\omega}
 \def\VV{{\mathbf V}}
 \def\EN{{\mathcal E}}
  \def \UC {\mathrm{UC}}
\def \CC {\mathrm{C}}
 \def\R{{\mathbb R}}
 \def\N{{\mathbb N}}
  \def\A{{\mathcal  A}}
 \def\ov{\overline}
 \def\F{{\mathcal F}}
 \def\Lip{\mathrm {Lip \, }}
\def\txt{\quad\hbox}
\def \Txt{\qquad\hbox}
\newcommand{\OO}{\hbox{\rm O}}
  \renewcommand{\proofname}{{\bf Proof:}}
 \theoremstyle{plain}
 \newtheorem{Thm}{Theorem}[section]
 \newtheorem{Lemma}[Thm]{\bf Lemma}
 \newtheorem{Corollary}[Thm]{\bf Corollary}
 \newtheorem{Theorem}[Thm]{\bf Theorem}
 \newtheorem{Proposition}[Thm]{\bf Proposition}
 \theoremstyle{definition}
 \newtheorem{Definition}[Thm]{\bf Definition}
 \theoremstyle{remark}
 \newtheorem{Remark}[Thm]{\bf Remark}
 \newtheoremstyle{Cl}
  {5pt}
  {3pt}
  {\sl}
  {}
  {\it}
  {:}
  {.5em}
  {}
 \theoremstyle{Cl}
 \def\begincproof{
                  \renewcommand{\proofname}{\it Proof:}
                  \begin{proof}
                 }
 \def\endcproof{
                \renewcommand{\qedsymbol}{$\diamondsuit$}
                \end{proof}
                \renewcommand{\qedsymbol}{\openbox}
                \renewcommand{\proofname}{\bf Proof:}
               }
 \renewcommand{\proofname}{{\bf Proof:}}
\title[Time--dependent equations on networks]
{Time--dependent Hamilton--Jacobi equations on networks}
\author{Antonio Siconolfi}
\address{Dipartimento di Matematica, Sapienza Universit\`a  di Roma, Italy.}
\email{siconolfi@mat.uniroma1.it}
\subjclass[2010]{35F21, 35R02, 35B51, 49L25.}
\keywords{time--dependent Hamilton-Jacobi equations, Embedded networks,
Viscosity solutions, Comparison principle,
semidiscrete  equations on graphs.}
\begin{document}
\maketitle

\begin{abstract} We study well posedness of time--dependent Hamilton--Jacobi equations on a network, coupled with a continuous initial datum and a flux limiter. We show existence and uniqueness of solutions as well as stability properties. The novelty of our approach is that comparison results are proved linking the equation to a suitable semidiscrete problem, bypassing doubling variable method. Further, we do not need special test functions, and perform tests relative to the equations on different arcs separately.
\end{abstract}

\section{Introduction}

True to the title, the purpose  of this  paper   is to study the well posedness of a time--dependent Hamilton--Jacobi equation,  coupled with suitable additional conditions, posed on a network.

 We consider a connected network $\Gamma$ embedded in $\R^N$ with a finite number of arcs $\ga$, which are regular simple curves parametrized in $[0,1]$,  linking  points of $\R^N$ called vertices, which make   up a set we denote by $\VV$.  We define a Hamiltonian on $\Gamma$ as a collection of Hamiltonians $H_\ga:[0,1] \times \R \to \R$, indexed by arcs, with  the crucial feature  that Hamiltonians associated to arcs possessing  different support, are totally unrelated.

The equations we deal with are accordingly of the form
\begin{equation}\label{intro1}
    u_t + H_\ga(s,u')=0  \Txt{in $(0,1) \times (0,+\infty)$}
\end{equation}
on each arc $\ga$, the aim  being  to uniquely select  distinguished viscosity type solutions  of each equation which can be assembled  together continuously, making up  a continuous function $u: \Gamma\times (0,+\infty) \longrightarrow \R$
with $u(\ga(s),t)$  solution of \eqref{intro1} for each $\ga$. To accomplish it, one has  to appropriately exploit the network geometry, via the adjacency condition between arcs and vertices, and the decisive issue for that is the right definition of supersolution. The subtle point in fact  is that the
conditions  for supersolutions are not the same  at all vertices,  but are given taking into account the
network structure, as made precise in   Definition \ref{defsupersol} {\bf (ii)}.

The problem  becomes discontinuous across all the  one--dimensional interfaces of the  form
\[\{(x,t), t \in [0,+\infty)\} \Txt{with $x \in \VV$,}\]
in contrast to what happens for the stationary version of this kind of equations, where the discontinuities are located at the vertices, that is to say: they are   finite and of zero  dimension.  This dimensional change explains why    the analysis  of evolutive equations on networks is by far more challenging than the stationary ones.

There are consequently few results available in the literature. The basic reference paper  is \cite{IM} by Imbert and Monneau, where the topic is treated  through PDE techniques, adapting tools from viscosity solutions theory, under the assumptions that the Hamiltonians in play are continuous, semiconvex and coercive. See also \cite{BI}, \cite{GIM} for applications of this theory.  A previous contribution of the same authors, with an additional coauthor, see \cite{IMZ}, requires  instead the Hamiltonians to be convex, and  attacks  the problem using control theoretic representation formulae.

 Here we prove existence, uniqueness and stability of solutions on the network   assuming convexity of the Hamiltonians, but without the growth conditions which allow applying Fenchel transform, so that  an action functional cannot be defined. In addition, the Hamiltonians we consider cannot be put in relation to any control model. In conclusion, though the Hamiltonians are convex, we do not have representation formulae for solutions at hand, and our techniques employ purely PDE methods.

One of the main discoveries in \cite{IM} is that to get well posedness of the evolutive problem,  the assignment of an initial datum at $t=0$  is not enough. It must actually be coupled with a condition regarding the time derivative of solutions on the discontinuity interfaces. They qualify as
 {\em flux--limited} the corresponding solutions. We adopt here the same point of view, and the terminology of {\em  flux limiter} as well.

We make in Remark \ref{compara} a comparison between our definition of solution and the one of \cite{IM}. They are clearly  the same outside the discontinuity interfaces, namely classical viscosity solutions. On the  interfaces, the definition of subsolution coincides as well, while regarding  supersolution, which is the most delicate  point, the formulation is different, and our definition is stronger. We believe that our pattern  is  more related to the geometrical sense of the definition, and   is more simple to write down,  in particular because  we take into account, for any arc,  also the arc with the opposite orientation.

In \cite{IM}, the method is first developed in the context of junctions, namely networks with a single vertex,  for Hamiltonians only depending on the momentum variable. It is  then generalized to Hamiltonians also depending on state variable and time, and defined on general networks. In our opinion this last part, which contains  interesting ideas, would deserve to be developed more.

 We do not need the preliminary step of junctions. We directly work, with Hamiltonian depending on state variable and momentum,  on a compact network, namely such that any arc has bounded length, with a general geometry and the unique limitation that no loops are admitted, namely  arcs for which initial and final point coincide.  We believe that our approach can also include the presence of loops, but this should require nontrivial adjustments. In \cite{IM}, unbounded arcs are admitted, but no loops.

 The approach of \cite{IM} is based on the construction of special test functions at the vertices, and a clever  adaptation of Crandall--Lions  doubling variable method to get the comparison result. Perron--Ishii method is used to prove existence of solutions.

 Our method is  different.  First of all, we do not use doubling variable techniques, but instead we prove  a comparison principle by  associating the Hamilton--Jacobi equation to a semidiscrete problem posed on the discontinuity interfaces.  This is the same road walked in \cite{PoSi}, \cite{SiconolfiSorrentino} for the stationary  case, even if the evolutive setting brings in some complications. The proof of  the comparison result for the semidiscrete problem turns out to be quite simple, and it is then transferred to the initial equation  exploiting the fundamental property that a continuous function $u:\Gamma \times [0,+\infty)   \to \R$  is solution of the main problem if and only $u(\ga(s),t)$ solves \eqref{intro1} in the viscosity sense  for any $\gamma$, and its trace on the discontinuity interfaces is solution of the semidiscrete problem.

A further relevant peculiarity of our techniques with respect to those in \cite{IM}, is that we do not use special test functions at the vertices, more generally, we do not need functions testing at the same time solutions of equations with different Hamiltonians. For our definition, it is enough to consider viscosity test functions for the equations \eqref{intro1}, separately considered, plus test functions on the discontinuity interfaces.
Finally,  we do not use Perron--Ishii method to  prove existence of solutions, but rely on  a more constructive technique, showing first existence for small time interval and then gluing  together the local solutions to get a solution global in time.

All in all, the main outputs of the paper are:
\begin{itemize}
  \item[--] comparison principle for uniformly continuous sub and supersolutions, see Theorem \ref{unicumque};
  \item[--] existence  results for Lipschitz continuous initial data, see Theorem \ref{faber}; and continuous initial data, see Proposition \ref{nichil};
  \item[--] existence of unique continuous solution, which is the maximal among the continuous subsolutions, it is in addition uniformly continuous if the initial datum is continuous, and Lipschitz continuous for Lipschitz continuous initial data, see Theorem \ref{unicumquebis} and Proposition \ref{orte}
  \item[--]  stability results, see  Corollary \ref{uniqlo}   and Theorem \ref{tombola}.
\end{itemize}

The paper is organized as follows:  In Section \ref{pre} we provide some preliminaries, we give the assumptions on the Hamiltonians, the definition of flux limiter and of solution in Section \ref{defsol}. The Section \ref{AAA} is devoted to the definition and the study of the main properties of an operator, denoted by $G$, which enters, together with the operator $F_x$ defined in Section \ref{semidiscrete},  in the definition of the semidiscrete equation. Roughly speaking, the operator $G$ allows taking into account the constraint due to the presence of the flux limiter. The proofs in this section are elementary.

In Section \ref{interval} we gather  material  regarding one--dimensional evolutive Hamilton--Jacobi equation on an interval. The focus is on (sub)solutions less than or equal to a given datum on part of the parabolic boundary. Since this point of view is quite unusual, we did not find the needed statements  in the literature. Therefore we have chosen  to prove everything in full details to make the presentation self--contained. This part is quite lengthy, but the arguments are traditional and plain. The corresponding proofs are mainly in Appendix \ref{A}.

The core  and the most innovative part of the paper is assembled in Sections \ref{semidiscrete} and \ref{well}. In Section \ref{semidiscrete}, we  write down the semidiscrete problem, prove a comparison principle for it, see Theorem \ref{unicum},  and study the connection with  the time--dependent Hamilton--Jacobi equation. The part regarding supersolutions is the most demanding one, see in particular Proposition \ref{fundasuper}. In Section \ref{well} we set down our main results, all the proofs are rather simple except that of the existence Theorem \ref{faber}.

Finally in Appendix \ref{AA}, we record some well--known results on $t$--partial sup convolutions we need in the paper.

\medskip

\subsection*{Acknowledgments}
The  author wishes to express his gratitude to the  Mathematical Sciences Research Institute in Berkeley (USA) for its kind hospitality in Fall 2018 during the trimester program ``{\it Hamiltonian systems, from topology to applications through analysis}'',
where this project was initiated.

\bigskip

\section{Preliminaries} \label{pre}

\subsection{Notations and basic definitions}

We denote by $\CC(\cdot)$, $\UC(\cdot)$, the spaces of real valued continuous and uniformly continuous functions, respectively. We further denote by $|\cdot|_\infty$ the uniform norm. Given $a$, $b$ in $\R$, we set
\[ a \vee b = \max \{a,b\}   \qquad  a \wedge b = \min \{a,b\}.\]
We set
 \[\R^+ = [0,+\infty),  \qquad  Q = (0,1) \times (0,+\infty)\]
Given an open rectangle $R=(a,b) \times (T_0,T_1) \subset Q$, with $T_1 \in \R \cup \{+\infty\}$,  we further  set
\begin{eqnarray*}
  \partial_p^+ R &=& [a,b] \times \{T_0\}  \cup \{b\} \times (T_0,T_1) \\
  \partial_p^- R &=& [a,b] \times \{T_0\} \cup \{a\} \times (T_0,T_1)\\
  \partial_p R &=&\partial_p^- R  \cup \partial_p^+ R
\end{eqnarray*}
For any $C^1$ function $\psi: Q \to \R$ and $(s_0,t_0) \in Q$, we denote by $\psi'(s_0,t_0)$ the {\em space} derivative, with respect to $s$,  at $(s_0,t_0)$, and by $\psi_t(s_0,t_0)$ or $\frac d{dt} \psi(s_0,t_0)$ the time derivative.  Given a Lipschitz continuous function, $w: Q \to \R$ we define
\[ \Lip w= \sup_{(s_1,t_1) \neq (s_2,t_2) \atop (s_i,t_i) \in Q} \frac{|w(s_1,t_1) -w(s_2,t_2)|}{|t_1-t_2| + |s_1-s_2|}.\]
The {\em (Clarke) generalized gradient} of a Lipschitz continuous function  $u: Q \to \R$ at $(s_0,t_0)$ is given by
\[  {\mathrm {\ov {co}}} \big \{(p,r)= \lim (u'(s_i,t_i), u_t(s_i,t_i)) \mid (s_i,t_i) \;\hbox{diffe. pts of $u$},\, (s_i,t_i) \to (s_0,t_0) \big \},\]
where $\mathrm{co}$ stands for convex hull, and is indicated by $\partial u(s_0,t_0)$.

Given a continuous function $u: \Gamma  \times \R^+ \to \R$ and an arc $\ga$ of $\Gamma$, we define $u \circ \ga:[0,1] \times [0,+ \infty)
\to \R$ as
 \[u\circ \ga(s,t)= u(\ga(s),t) \Txt{for any $(s,t) \in Q$.}\]
 Given a continuous function $u: Q \to \R$, we call {\em supertangents} (resp.  {\em subtangents}) to $u$ at $(s_0,t_0) \in Q$  the viscosity test functions from above (resp. below). If needed, we take, without explicitly mentioning, $u$ and test function coinciding at $(s_0,t_0)$ and test function strictly greater (resp. less) than $u$ in a punctured neighborhood of $(s_0,t_0)$.

 Given a closed subset $C \subset \ov Q$,  where $\ov Q$ stands for the closure of $Q$, we say that a supertangent  (resp. subtangent) $\varphi$ to $u$ at $(s_0,t_0) \in C$ is {\em constrained to $C$} if $(s_0,t_0)$ is maximizer (resp. a minimizer) of $u - \varphi$ in a neighborhood of $(s_0,t_0)$ intersected with $C$.

 The same notions apply, with obvious adaptations, to continuous function from $\R^+$ to $\R$.

\medskip
\subsection{Networks}  An {\it embedded network},  is a subset $
\Gamma \subset \R^N$ of the form
\[ \Gamma = \bigcup_{\ga \in \EN} \, \gamma([0,1]) \subset \R^N,\]
where $\EN$ is a finite collection of regular ({\it i.e.}, $C^1$
with non-vanishing derivative) simple oriented curves, called {\it
arcs} of the network,  that we assume, without any loss of
generality, parameterized on $[0,1]$.
 \\

Observe that on the support of any arc $\ga$, we  also consider the
inverse parametrization   defined as
\[- \ga(s)= \ga( 1 -s) \qquad\hbox{for $s \in [0,1]$.}\]
We call $- \ga$ the {\it inverse arc} of $ \ga$.  We assume
\begin{equation}\label{netw}
    \ga((0,1)) \cap \ga'([0,1]) = \emptyset \qquad\hbox{whenever $\ga \neq
\ga'$, $\ga \neq -{\ga'}$}.\\
\end{equation}

\medskip
 We call  {\it vertices} the
initial  and terminal points of the arcs, and denote  by  $\VV$ the
sets of all such vertices. Note that \eqref{netw} implies that
\[\ga((0,1)) \cap \VV  = \emptyset \qquad\hbox{for any $\ga \in
\EN$.}\] We assume that the network  is  {\em connected}, namely given two
vertices there is a finite concatenation of  arcs linking them. The unique  restriction we assume  on the geometry of the network is the nonexistence of loops, namely  arcs with initial and final point coinciding.   See \cite{Sunada} for a comprehensive treatment on graphs and networks.\\

Given $x \in \VV$, we define
\[ \Gamma_x =\{ \ga  \mid \ga(1)=x\}.\]

\medskip
A Hamiltonian on a network $\Gamma$ is a collection of Hamiltonians
$H_{\gamma}:[0,1] \times \R \to \R$, indexed by the arcs satisfying
\begin{equation}\label{ovgamma}
   H_{-\ga}(s,p) = H_{\ga}(1-s,-p) \qquad\hbox{for any $\ga
\in \EN$} \\
\end{equation}
Apart the above compatibility condition, the Hamiltonians $H_\ga$ are unrelated.

\bigskip
\section{Setting of the problem and definition of solution} \label{defsol}
We consider a Hamiltonian $\{H_\ga\}$ on the network $\Gamma$.
We  require  any  $H_\ga$ to be:
\begin{itemize}
    \item[{\bf(H1)}]  continuous in both arguments;
    \item[{\bf(H2)}]  convex   in the momentum variable;
    \item[{\bf(H3)}]  coercive  in the momentum variable, uniformly  in $s$;
    \item[{\bf (H4)}] uniformly local Lipschitz continuous  in $p$. Namely, given $M >0$, there exists $C_M$   such that
  \[ H_\ga(s,p) - H_\ga(s,q) \leq C_M \, |p-q| \Txt{for any $s \in [0,1]$, $q$, $p$ in $(-M,M)$}\]
   \end{itemize}

\smallskip

We set
\[c_\ga= - \max_s \, \min_p  H_\ga(s,p) \Txt{for any arc $\ga$.}\]
Note that the stationary equation
\[H_\ga(s,u')=a\]
admits (viscosity) subsolutions in $(0,1)$ if and only if $a \geq - c_\ga$.

Following \cite{IM}, we call {\em flux limiter} any   function  $x \mapsto c_x$ from $\VV$ to $\R$ satisfying
\[c_x  \leq \min_{\ga\in \Gamma_x} c_\ga \Txt{for any $x \in \VV$.}\]

\smallskip

Let $ (T_0,T_1)$ be an interval, possibly unbounded, contained in $\R^+$. For any given arc $\ga$, we consider  the time--dependent
equation
\begin{equation}\label{HJg}  \tag{HJ$_\ga$}
    u_t + H_\ga(s,u')=0 \Txt{in $ (0,1) \times (T_0,T_1)$.}
\end{equation}
We are interested in finding  a function $v: \Gamma \times [T_0,T_1) \to \R$  such that $v \circ \ga$ solves \eqref{HJg} in $R$, for any $\ga$, taking into account, in the sense we are going to specify, a flux limiter $c_x$ at any vertex. We denote by (HJ$\Gamma$) the problem as a whole.
\smallskip

The definition of (sub / super) solution to (HJ$\Gamma$) is as follows:

\smallskip

\begin{Definition}\label{defsupersol} We say that a  continuous  function $v(x,t)$,  $v: \Gamma  \times [T_0,T_1) \to \R$, is a {\em supersolution}  in $(T_0,T_1)$ if
\begin{itemize}
   \item[{\bf (i)}] $v\circ \ga$ is a viscosity supersolution of \eqref{HJg} in  $(0,1) \times (T_0,T_1)$ for any arc $\ga$;
     \item[{\bf (ii)}] for any vertex $x$ and  time $t_0 \in (T_0,T_1)$,  if
     \[ \frac d{dt} \phi (t_0) < c_x\]
     for some $C^1$ subtangent $\phi$ to $v(x,\cdot)$ at $t_0$, then
there is  an arc $\ga \in \Gamma_x$
such that  all the $C^1$  subtangents   $\varphi$,
constrained  to $[0,1] \times \ov {(T_0,T_1)}$, to
$v \circ \ga$  at  $(1,t_0)$   satisfy
     \[   \varphi_t(1,t_0) +  H_\ga(1, \varphi'(1,t_0))  \geq 0.\]
\end{itemize}
\end{Definition}
  Note  that  the arc $\ga$,  with $\ga(1)=x$, where   condition {\bf (ii)} holds true  changes in function of the time.

\smallskip

\begin{Definition}\label{defsubsol} We say that a  continuous  function $v(x,t)$, $v: \Gamma \times [T_0,T_1)  \to \R$, is a {\em subsolution} to (HJ$\Gamma$) in $(T_0,T_1)$ if
\begin{itemize}
   \item[{\bf (i)}] $v\circ \ga$ is a viscosity subsolution  of \eqref{HJg} in  $(0,1) \times (T_0,T_1)$ for any arc $\ga$;
     \item[{\bf (ii)}] for any vertex $x$ and  time $t_0 \in (T_0,T_1)$,  all supertangents $\psi(t)$ to $v(x,\cdot)$ at $t_0$ satisfy
     \[  \frac d{dt} \psi(t_0) \leq c_x.\]
\end{itemize}
\end{Definition}

We finally say that a  continuous function $v$ is {\em solution}  to (HJ$\Gamma$) in $(T_0,T_1)$  if it subsolution and supersolution at the same time.

\smallskip

\begin{Remark}\label{stufa} Given  a constant $a \in \R$ and  the family of Hamiltonians
\[H'_\ga(s,p)= H_\ga(s,p) + a \Txt{for any $\ga$,}\]
it is apparent that $c_x -a$ is a flux limiter for the $H'_\ga$. It is also apparent that $u$ is solution to (HJ$\Gamma$) with $H'_\ga$ in place of $H_\ga$, initial datum $g$ and flux limiter $c_x - a$, if and only if $u+a \,t$ is solution of the original problem (HJ$\Gamma$).  This means that the analysis of the problem is not affected if we add to all Hamiltonians the same constant. We can therefore assume, without any loss of generality, all the Hamiltonians $H_\ga$ to be strictly positive.
 This implies that flux limiter is   negative  at any vertex, and consequently  all subsolutions to \eqref{HJg} are decreasing in time.
\end{Remark}

\medskip

\begin{Remark}\label{compara}

We make some  comparisons  between our definition of solution and the one in \cite{IM}. Clearly, the point is to look at  the conditions  required on the interfaces
\[\{(x,t) \mid t \in [T_0,T_1)\}    \Txt{with $x \in \VV$}.\]
According to \cite[Theorem 2.10]{IM}, the notion of subsolution is the same.
As first pointed out in \cite{SC}, the definition of supersolution, for equations posed in networks,  is more delicate.

Our definition reads roughly like: if at a vertex $x$, for  some instant of time, the constraint given by the flux limiter is non active, then at least  for the equation on one arc $\ga$ ending at $x$,  some   state  constraint conditions must be satisfied.
This follows along the same  line of the definition given in \cite{PoSi}, \cite{SiconolfiSorrentino} for stationary equations.  In this case, due to the absence of the time variable, there is no flux limiter, so that just the state constraint condition survives at the vertices.

As far as we can see, our definition  is stronger in two respects.  First, we have more test functions from below at points on the interfaces  because we {\em perform tests separately}  on any branch  joining at the vertex under exam. On the contrary, in \cite{IM}  $C^1$ functions {\em testing from below at the same time all the equations} on the branches are used. Secondly, we require the supersolution property to be satisfied for {\em all} the test functions relative to a {\em distinguished equation}. On the contrary, in \cite{IM}, given a joint test function,   the validity of the supersolution property is just assumed  for {\em some} equation, which should {\em change  together with the test function}.
\end{Remark}

\bigskip

\section{The operator   $G$} \label{AAA}

Loosely speaking, the operator $G$ allows taking into account the constraint given, in problem (HJ$\Gamma$), by the flux limiter.  Given a time interval $[T_0,T_1) \subset \R^+$, with $T_1 \leq + \infty$, we  define $G:  \CC([T_0,T_1) \times (-\infty,0)) \to \CC([T_0,T_1))$ via
\begin{equation}\label{defG}
G[\psi,a](t)= \min \{  \psi(r)  +a  \,(t-r) ,\, r  \in [T_0,t].\}
\end{equation}

\smallskip

We recall two basic results that we will exploit in this section.

\smallskip
\begin{Lemma}\label{decresce} Let $u$ be a continuous function in an interval $[\al,\be]$ satisfying
\[ \frac d{dt} \varphi(t) \leq  0 \;\hbox{(resp. $\geq 0$)}  \]
for any $t \in (\al,\be)$, any $C^1$ supertangent  to $u$ at $t$. Then $u$ is nonincreasing (resp. nondecreasing) in $[\al,\be]$.
\end{Lemma}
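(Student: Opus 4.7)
The plan is to prove the nonincreasing statement by contradiction; the nondecreasing version will follow by a symmetric construction. Suppose for contradiction that there exist $t_1 < t_2$ in $[\al,\be]$ with $u(t_1) < u(t_2)$. Continuity of $u$ lets us perturb $t_1$ and $t_2$ slightly so that both lie in the open interval $(\al,\be)$, and we fix some $t_3 \in (t_2, \be)$. The idea is to locate an interior maximizer of $u$ minus a carefully perturbed affine function, and then read off a $C^1$ supertangent with strictly positive derivative, contradicting the hypothesis.

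Concretely, for small parameters $\eps, \mu > 0$, consider
\[
w(t) = u(t) - \eps\,(t - t_1) - \frac{\mu}{t_3 - t}, \qquad t \in [t_1, t_3).
\]
The barrier $\mu/(t_3-t)$ drives $w(t) \to -\infty$ as $t \to t_3^-$, so $w$ attains its maximum on $[t_1,t_3)$ at some $t^\ast$. Choosing $\eps$ and $\mu$ small enough that $\eps(t_2 - t_1) + \mu/(t_3-t_2) < u(t_2) - u(t_1)$ yields $w(t_2) > w(t_1)$, which forces $t^\ast \neq t_1$ and hence $t^\ast \in (t_1,t_3) \subset (\al,\be)$. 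The function
\[
\varphi(t) = u(t^\ast) + \eps\,(t - t^\ast) + \mu\!\left(\frac{1}{t_3 - t} - \frac{1}{t_3 - t^\ast}\right)
\]
is $C^1$ near $t^\ast$, coincides with $u$ there, and dominates $u$ in a neighborhood of $t^\ast$; that is, $\varphi$ is a $C^1$ supertangent to $u$ at $t^\ast$. But $\varphi'(t^\ast) = \eps + \mu\,(t_3-t^\ast)^{-2} > 0$, contradicting the hypothesis.

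For the nondecreasing case, assume $u(t_1) > u(t_2)$ for some $t_1 < t_2$, again reduced to the interior, fix $t_0 \in (\al,t_1)$, and run the same argument with
\[
w(t) = u(t) + \eps\,(t_2 - t) - \frac{\mu}{t - t_0},
\]
which yields an interior maximizer and a $C^1$ supertangent with strictly negative derivative. The only delicate point in the whole argument—routine but worth highlighting—is the potential escape of the maximizer of $u - \eps(t-t_1)$ to the right endpoint, where no test function property is available; the role of the barrier term is precisely to confine the maximizer to $(\al,\be)$ while contributing a term of the correct sign to the derivative of the resulting supertangent.
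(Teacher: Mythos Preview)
Your argument for the nonincreasing case is correct and is a genuinely different route from the paper's. The paper perturbs to $u_\eps=u-\eps t$, observes that $u_\eps$ can have no interior local maximizer, and then appeals to the density of points admitting a $C^1$ supertangent to derive a contradiction on a subinterval where $u_\eps$ would be nondecreasing. Your approach is instead a direct barrier construction in the spirit of viscosity comparison: you penalize near the right endpoint to force an interior maximizer of $u$ minus a smooth function, and then read off a supertangent with strictly positive slope. Your argument is more self--contained (it does not invoke the density fact) and gives an explicit test function; the paper's argument is shorter once that density is granted.

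There is, however, a slip in your treatment of the nondecreasing case. With your choice
\[
w(t) = u(t) + \eps\,(t_2 - t) - \frac{\mu}{t - t_0},
\]
the resulting supertangent at the interior maximizer $t^\ast$ is $\varphi(t)=w(t^\ast)-\eps(t_2-t)+\mu/(t-t_0)$, whose derivative is $\varphi'(t^\ast)=\eps-\mu/(t^\ast-t_0)^2$. This sign is not controlled: nothing prevents $t^\ast$ from sitting far from $t_0$, making $\varphi'(t^\ast)>0$, which is no contradiction to the hypothesis $\geq 0$. The fix is immediate: either change the sign of the $\eps$--term to $w(t)=u(t)-\eps(t_2-t)-\mu/(t-t_0)$, which gives $\varphi'(t^\ast)=-\eps-\mu/(t^\ast-t_0)^2<0$ while still ensuring $w(t_1)>w(t_2)$ for $\eps,\mu$ small; or simply deduce the nondecreasing case from the nonincreasing one by applying it to $\tilde u(t)=u(\al+\be-t)$.
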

\begin{proof}
 We treat the case $\leq 0$. Given $\eps >0$, the function $u_\eps:=u(t)- \eps \, t$ satisfies the assumptions with strict inequality. This implies that it cannot have local maximizers in $(\al,\be)$ and consequently it has at most one local minimizer. If it is not strictly  decreasing in $[\al,\be]$, there is a nontrivial subinterval, say $(\ga,\de)$,  where it is nondecreasing. Since the set of points where $u_\eps$ admits $C^1$ supertangent is dense in $[\al,\ga]$,  we find $t_0 \in (\ga,\de)$ and $\varphi$ supertangent to $u_\eps$ at $t_0$. Summing up: there is a neighborhood of $t_0$ where $\varphi$ is strictly decreasing, $u_\eps$ nondecreasing and $\varphi$ supertangent to $u_\eps$ at $t_0$. This is clearly impossible.

We derive that $u_\eps$ is decreasing in $[\al,\be]$ and, passing at the limit as $\eps \to 0$, we find that $u$ is nonincreasing, as was claimed. The case $ \geq 0$ can be proved arguing similarly.

\end{proof}

\smallskip

The same statement of above also holds by replacing supertangents by subtangents.

\begin{Lemma}\label{decrescebis} Let $u$ be a continuous function in an interval $[\al, \be]$ satisfying
\[ \frac d{dt} \varphi(t) \leq  0 \;\hbox{(resp. $\geq 0$)}  \]
for any $t \in (\al,\be)$ any $C^1$ subtangent  to $u$ at $t$. Then $u$ is nonincreasing (resp. nondecreasing) in $[\al,\be]$.
\end{Lemma}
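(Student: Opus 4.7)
The plan is to mirror the author's proof of Lemma~\ref{decresce} with the roles of super- and subtangents (and of local maxima and minima) swapped throughout. Fix $\eps>0$ and perturb by setting $u_\eps(t):=u(t)-\eps\,t$: every $C^1$ subtangent $\varphi$ to $u_\eps$ at an interior point $t$ lifts to the $C^1$ subtangent $\varphi(\cdot)+\eps\,\cdot$ of $u$ at $t$, so the hypothesis gives $\varphi_t(t)+\eps\le 0$, i.e.\ $\varphi_t(t)\le -\eps<0$. Hence $u_\eps$ satisfies the subtangent condition with strict inequality.

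I would then observe that $u_\eps$ cannot admit any interior local minimizer, since at such a point the constant function is a $C^1$ subtangent of derivative zero, contradicting the strict inequality just obtained. A continuous function on $[\al,\be]$ with no interior local minimum possesses at most one interior local maximum, so $u_\eps$ consists of at most two monotone pieces.

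If $u_\eps$ were not strictly decreasing on $[\al,\be]$, there would exist a non-trivial open subinterval $(\ga,\de)\subset(\al,\be)$ on which $u_\eps$ is nondecreasing. Invoking the standard fact---mirroring the one used in Lemma~\ref{decresce}---that the set of points at which a continuous function admits a $C^1$ subtangent is dense, pick $t_0\in(\ga,\de)$ together with a $C^1$ subtangent $\varphi$ to $u_\eps$ at $t_0$. Since $\varphi_t(t_0)<0$, $\varphi$ is strictly decreasing near $t_0$; so looking to the left, for all sufficiently small $h>0$,
\[\varphi(t_0-h)>\varphi(t_0)=u_\eps(t_0)\ge u_\eps(t_0-h),\]
contradicting $\varphi\le u_\eps$ near $t_0$. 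Therefore $u_\eps$ is strictly decreasing on $[\al,\be]$, and letting $\eps\downarrow 0$ yields that $u$ is nonincreasing. The ``$\ge 0$'' case is obtained by applying the foregoing argument to $-u$.

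The only ingredient I would not take as routine is the density of points admitting a $C^1$ subtangent; this is classical (obtainable via inf-convolution regularisation or a Jensen-type lemma) and exactly mirrors the supertangent density tacitly used in Lemma~\ref{decresce}. Modulo this input, the proof amounts to a careful, sign-reversed transcription of the supertangent argument, so I do not anticipate a genuine obstacle.
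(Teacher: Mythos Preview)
Your proof is correct and follows precisely the route the paper intends: the paper does not give a separate argument for Lemma~\ref{decrescebis}, merely remarking that ``the same statement of above also holds by replacing supertangents by subtangents,'' and your proposal is exactly that sign-reversed transcription of the proof of Lemma~\ref{decresce}, including the perturbation $u_\eps=u-\eps t$, the exclusion of interior local minima, and the density of subtangent points.
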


\smallskip

\begin{Lemma} We have

\begin{equation}\label{pfb2tris}
 G[\psi,a](t) = \min \{ G[\psi,a](r)  +a  \,(t-r) ,\, r  \in [T_0,t]\} \Txt{for any $t \in [T_0,T_1)$.}
 \end{equation}
 \end{Lemma}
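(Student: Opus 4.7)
The plan is to prove the two inequalities separately, which will amount to a direct unpacking of the definition
\[ G[\psi,a](t)=\min\bigl\{\psi(s)+a(t-s):s\in[T_0,t]\bigr\}. \]
This is essentially a semigroup (dynamic programming) identity, so no deep ingredient is needed; the Lemmata \ref{decresce}, \ref{decrescebis} do not enter here.

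For the inequality ``$\leq$'' in \eqref{pfb2tris}, I would simply note that $r=t$ is an admissible choice in the min on the right--hand side, contributing the value $G[\psi,a](t)+a(t-t)=G[\psi,a](t)$. Hence the minimum over $r\in[T_0,t]$ is at most $G[\psi,a](t)$.

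For the reverse inequality, I fix $r\in[T_0,t]$ and exploit that $[T_0,r]\subseteq[T_0,t]$. By continuity of $\psi$ and compactness of $[T_0,r]$, pick $s^\ast\in[T_0,r]$ with $G[\psi,a](r)=\psi(s^\ast)+a(r-s^\ast)$. Then
\[ G[\psi,a](r)+a(t-r)=\psi(s^\ast)+a(r-s^\ast)+a(t-r)=\psi(s^\ast)+a(t-s^\ast), \]
and since $s^\ast\in[T_0,t]$, the right--hand side is at least $G[\psi,a](t)$ by definition. Taking the minimum over $r\in[T_0,t]$ yields the remaining inequality.

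There is essentially no obstacle: the argument is the standard chaining/concatenation property enjoyed by infimal convolutions with a linear kernel. The only minor point worth flagging is the existence of the minimizer $s^\ast$, which follows from continuity of $\psi$ on the compact set $[T_0,r]$; alternatively one may work with an $\varepsilon$--minimizer and let $\varepsilon\to 0$, which avoids any compactness issue should $\psi$ only be assumed continuous on $[T_0,T_1)\times(-\infty,0)$.
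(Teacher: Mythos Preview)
Your argument is correct and essentially identical to the paper's: both directions are handled by the same chaining computation (pick an optimal $s^\ast$ for $G[\psi,a](r)$ and observe $\psi(s^\ast)+a(t-s^\ast)\ge G[\psi,a](t)$), together with the trivial choice $r=t$. The only slip is that you have the labels of the two inequalities reversed: the $r=t$ argument gives $\min_r\{\,G[\psi,a](r)+a(t-r)\,\}\le G[\psi,a](t)$, which is the ``$\ge$'' direction of \eqref{pfb2tris}, and your second paragraph yields the ``$\le$'' direction.
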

\begin{proof} We  set $w(t)= G[\psi,a](t)$.
It is apparent  that $w(t)$ is greater than or equal to the function in the right hand--side of \eqref{pfb2tris}.  Given $ r \leq t$, we find for a suitable  $r' \leq r$
\[w(r) + a \, (t-r)= \psi(r') +a \, (r-r') +a \, (t -r) = \psi(r') +a \, ( t-r') \geq w(t),\]
which  proves
\[ w(t) \leq \min \{  w(r)  +a  \,(t-r) ,\, r  \in [T_0,t]\} \Txt{for any $t \in [T_0,T_1)$}\]
and concludes the proof.
\end{proof}

\smallskip
\begin{Lemma}\label{proprovvi}  $G[\psi,a]$ is the maximal  continuous function in $[T_0,T_1)$ less than or equal to $\psi$ satisfying
\begin{equation}\label{uffa}
  \frac d{dt}\varphi (t) \leq a \txt{for any $t \in (T_0,T_1)$, any  $C^1$ supertangent $\varphi$  to $G[\psi,a]$ at $t$ .}
\end{equation}
\end{Lemma}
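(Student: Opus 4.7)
The plan is to verify in turn continuity of $G[\psi,a]$, the bound $G[\psi,a]\leq \psi$, the supertangent condition \eqref{uffa} for $G[\psi,a]$ itself, and finally maximality. The first two points are immediate: taking $r=t$ in \eqref{defG} gives $G[\psi,a](t)\leq \psi(t)$, while continuity of $G[\psi,a]$ follows from the continuity of $\psi$ and the fact that the minimum in \eqref{defG} is taken over the compact set $[T_0,t]$ which varies continuously with $t$.

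For the supertangent condition, write $w(t):=G[\psi,a](t)$ and exploit the self--similarity identity \eqref{pfb2tris}, which yields $w(t)\leq w(r)+a(t-r)$ for every $r\in[T_0,t]$. In particular $t\mapsto w(t)-at$ is non--increasing on $[T_0,T_1)$. If now $\varphi$ is any $C^1$ supertangent to $w$ at $t_0\in(T_0,T_1)$, then for $t\leq t_0$ sufficiently close to $t_0$ one has
\[
\varphi(t)\ \geq\ w(t)\ \geq\ w(t_0)+a(t-t_0)\ =\ \varphi(t_0)+a(t-t_0);
\]
dividing by $t-t_0<0$ and letting $t\uparrow t_0$ yields $\varphi_t(t_0)\leq a$, as desired.

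For maximality, let $u\in \CC([T_0,T_1))$ satisfy $u\leq \psi$ and \eqref{uffa}. Observe that $\varphi$ is a $C^1$ supertangent to $u$ at $t_0$ if and only if $\varphi - a(\cdot)$ is a $C^1$ supertangent to $u-a(\cdot)$ at $t_0$, and under this correspondence the condition $\varphi_t(t_0)\leq a$ becomes $(\varphi-a(\cdot))_t(t_0)\leq 0$. Hence $t\mapsto u(t)-at$ meets the hypothesis of Lemma \ref{decresce} (case $\leq 0$) on any closed subinterval, and is therefore non--increasing on $[T_0,T_1)$. Consequently, for every $r\in[T_0,t]$,
\[
u(t)-at\ \leq\ u(r)-ar\ \leq\ \psi(r)-ar,
\]
i.e.\ $u(t)\leq \psi(r)+a(t-r)$; taking the minimum over $r\in[T_0,t]$ gives $u(t)\leq G[\psi,a](t)$.

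No step should present serious difficulty: the two main tools, namely the self--similarity \eqref{pfb2tris} and Lemma \ref{decresce}, are already in place. The only minor point worth care is the direction of monotonicity (the function $w-at$, not $w$ itself), and the limit from the left in deriving $\varphi_t(t_0)\leq a$; comparing $u$ and $\psi$ at the left endpoint $t=T_0$ is automatic from monotonicity on the half--open interval.
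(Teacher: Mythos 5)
Your proof is correct and follows essentially the same route as the paper: both derive the supertangent bound from the semigroup-type identity \eqref{pfb2tris} by comparing $\varphi$ with $w$ from the left at $t_0$, and both obtain maximality by applying Lemma \ref{decresce} (to $u(t)-at$) to get $u(t)\leq u(r)+a(t-r)\leq\psi(r)+a(t-r)$ and then minimizing over $r$. The only cosmetic difference is that the paper proves continuity of $G[\psi,a]$ by an explicit modulus estimate (which it reuses later in Remark \ref{cazzopapa}), whereas you invoke continuity of a minimum over a continuously varying compact set; both are fine for the statement at hand.
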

\begin{proof} We  set  $w(t)= G[\psi,a](t)$.    We deduce  from \eqref{pfb2tris} that $w$ is decreasing. Given $T> T_0$; we denote by $\om$ a continuity modulus of $w$ in $[T_0,T]$. We consider  $r$,  $t$  in $[T_0,T]$ with $r < t$, and denote by  $r_0 \in [T_0, t]$ a time realizing the equality in \eqref{defG}. If $r_0 \leq r$ then
\begin{eqnarray*}
  |w(r)- w(t)| &=& w(r)-w(t) \leq \psi(r_0) + a \, (r-r_0) - \psi(r_0) -a\,(t-r_0) \\
   &=& - a \, |t-r|.
\end{eqnarray*}
If instead $r_0 > r$ we obtain
\begin{eqnarray*}
  |w(r)- w(t)|  &\leq& \psi(r)  - \psi(r_0) - a \,(t-r_0) \leq \om (r_0-r) -a  \, (t-r_0) \\
 &\leq& \om(|t-r|)- a\, |t-r|.
\end{eqnarray*}
The above formulae show that  $w$ is continuous. If $\varphi$  is  a $C^1$ differentiable supertangent to $w$ at $t$, we have by \eqref{pfb2tris}
\[- \frac d{dt} \varphi (t) = \lim_{h \to 0^+} \frac {\varphi(t-h) -\varphi(t)}h \geq \lim_{h \to 0^+} \frac {w(t-h) -w(t-h)-a \, h}h = -a.\]
 Finally, If $v \leq \psi$ is a continuous function in $[T_0,T_1)$ satisfying  \eqref{uffa}  with $v$ in place of $G[\psi,a]$, then, given $t \in [T_0,T_1)$, we have by Lemma \ref{decresce}
\[v(t) \leq v(r)  + a \, (t-r)  \leq \psi(r)  + a \, (t-r) \txt{for any $r \in [T_0,t)$}.\]
This implies
\[v(t) \leq w(t).\]
and concludes the proof.

\end{proof}

\smallskip

\begin{Remark} \label{cazzopapa}
 We deduce from the proof of the above lemma that if $\psi$ is uniformly continuous  with continuity modulus $\om$ in  $[T_0,T_1)$,  then $G[\psi,a]$ is uniformly continuous as well with continuity modulus
 \[r \mapsto \om(r) - a \, r.\]
In addition, if $\psi$ is   Lipschitz continuous, then $G[\psi,a]$ is Lipschitz continuous as well,  it is maximal in the family of Lipschitz continuous functions $w: [T_0,T_1) \to \R$   satisfying
\[ v(t) \leq \psi(t) \txt{and} \qquad \frac d{dt} v(t) \leq a \txt{for a.e. $t$,}\]
and  has Lipschitz constant $-a \vee \Lip \psi$.

\end{Remark}
\smallskip
We record for later use:

\begin{Lemma}\label{lippi}
Assume that $\psi_n$ is a sequence of continuous functions uniformly converging to a function $\psi$ in $[T_0,T_1)$, then
\[G[\psi_n,a] \to G[\psi,a] \Txt{uniformly, in $[T_0,T_1)$.}\]
\end{Lemma}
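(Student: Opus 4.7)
The plan is to exploit the fact that the operator $G[\cdot,a]$, being built via a pointwise minimum, is 1-Lipschitz with respect to the uniform norm.

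Fix $t \in [T_0,T_1)$. Since $[T_0,t]$ is compact and $\psi$, $\psi_n$ are continuous, the minima in the definition \eqref{defG} are attained. Let $r \in [T_0,t]$ realize the minimum for $G[\psi,a](t)$, and $r_n \in [T_0,t]$ realize the minimum for $G[\psi_n,a](t)$. Setting $\eps_n := |\psi_n - \psi|_\infty \to 0$, I would write
\[ G[\psi_n,a](t) \leq \psi_n(r) + a(t-r) \leq \psi(r) + a(t-r) + \eps_n = G[\psi,a](t) + \eps_n, \]
and symmetrically, using $r_n$,
\[ G[\psi,a](t) \leq \psi(r_n) + a(t-r_n) \leq \psi_n(r_n) + a(t-r_n) + \eps_n = G[\psi_n,a](t) + \eps_n. \]
Combining these gives $|G[\psi_n,a](t) - G[\psi,a](t)| \leq \eps_n$, uniformly in $t$.

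There is no real obstacle here; the only thing to double-check is that both minimizers actually exist, which is immediate from the compactness of $[T_0,t]$ (for any $t<T_1$) and the continuity of the relevant integrands. The estimate is then uniform on the whole of $[T_0,T_1)$ because the bound $\eps_n$ does not depend on $t$, so $|G[\psi_n,a]-G[\psi,a]|_\infty \leq |\psi_n-\psi|_\infty$ and the claimed uniform convergence follows.
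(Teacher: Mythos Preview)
Your proof is correct and follows essentially the same idea as the paper: both establish that $G[\cdot,a]$ is $1$--Lipschitz for the uniform norm. The paper's version is just a more compressed phrasing of your computation, writing the two inequalities as $G[\psi_n,a]+\eps = G[\psi_n+\eps,a] \geq G[\psi,a] \geq G[\psi_n-\eps,a] = G[\psi_n,a]-\eps$ via the translation invariance and monotonicity of $G$, rather than unpacking the minimizers explicitly.
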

\begin{proof}
Given $\eps >0$, we have for $n$ large
\[G[\psi_n,a]+ \eps= G[\psi_n+\eps,a] \geq G[\psi,a] \geq G[\psi_n -\eps,a] = G[\psi_n,a]- \eps.\]
\end{proof}

\smallskip

\begin{Lemma}\label{provvi} Assume $\psi_1$, $\psi_2$ to be  continuous functions from $[T_0,T_1)$ to $\R$  satisfying
\[ \psi_1 > \psi_2 \Txt{in $[T_0,T]$, for some $T > T_0$}\]
then
\[G[\psi_1,a] > G[\psi_2,a] \Txt{in $[T_0,T]$, for any $a <0$.}\]
\end{Lemma}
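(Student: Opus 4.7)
The plan is to pick, for each $t \in [T_0, T]$, a minimizer $r^\star$ realizing the minimum in the definition \eqref{defG} of $G[\psi_1, a](t)$, and then use this same $r^\star$ as a trial point in the minimum defining $G[\psi_2, a](t)$. The strict inequality between the two values of $G$ will then follow directly from the pointwise strict inequality of the data.

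First I would fix an arbitrary $t \in [T_0, T]$ and observe that, since $\psi_1$ is continuous on the compact interval $[T_0, t]$, there exists $r^\star \in [T_0, t]$ with
\[
G[\psi_1, a](t) = \psi_1(r^\star) + a(t - r^\star).
\]
Since $t \leq T$, this minimizer satisfies $r^\star \in [T_0, T]$, so the hypothesis gives $\psi_1(r^\star) > \psi_2(r^\star)$.

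Next I would bound $G[\psi_2, a](t)$ from above by taking $r = r^\star$ as a competitor in \eqref{defG}, yielding
\[
G[\psi_2, a](t) \leq \psi_2(r^\star) + a(t - r^\star) < \psi_1(r^\star) + a(t - r^\star) = G[\psi_1, a](t),
\]
where the strict middle inequality uses $\psi_2(r^\star) < \psi_1(r^\star)$ and the fact that the linear term $a(t - r^\star)$ cancels on both sides. This gives the conclusion for the chosen $t$, and since $t \in [T_0, T]$ was arbitrary, the claim follows.

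I do not expect any genuine obstacle here: the sign of $a$ plays no role in the argument (it is already built into the domain of $G$), attainment of the minimum requires only continuity of $\psi_1$ on a closed bounded interval, and the dependence $r^\star = r^\star(t)$ is harmless since the proof proceeds pointwise in $t$.
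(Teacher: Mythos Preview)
Your argument is correct. It differs slightly from the paper's proof: instead of picking a minimizer for $G[\psi_1,a](t)$ and using it as a competitor for $G[\psi_2,a](t)$, the paper first exploits compactness of $[T_0,T]$ and continuity of $\psi_1,\psi_2$ to find a uniform gap $b>0$ with $\psi_1 > \psi_2 + b$ on $[T_0,T]$, and then applies the monotonicity of $G$ together with the translation identity $G[\psi+b,a]=G[\psi,a]+b$ to obtain $G[\psi_1,a] \geq G[\psi_2,a] + b > G[\psi_2,a]$ on $[T_0,T]$. Your approach is more direct and avoids appealing to those operator properties; the paper's route, on the other hand, yields the mildly stronger conclusion that the gap between $G[\psi_1,a]$ and $G[\psi_2,a]$ is uniform on $[T_0,T]$. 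Either way the proof is a one-liner, and your remark that the sign of $a$ is not actually used is accurate.
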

\begin{proof} Because of the continuity of $\psi_1$, $\psi_2$ there exists $b >0$ with
\[ \psi_1 > \psi_2+ b   \Txt{in $[T_0,T]$}\]
therefore
\[ G[\psi_1,a] \geq G[\psi_2+ b,a] = G[\psi_2,a] + b > G[\psi_2,a]\]
\end{proof}

\smallskip
The next result will be  used in Proposition \ref{fundasol}.

\begin{Proposition} \label{overpfb2}  Let   $(\psi,a) \in \CC([T_0,T_1))\times (-\infty,0)$. If  $G[\psi,a]$ admits a $C^1$ subtangent $\varphi$ at $t_0 \in (T_0,T_1)$  with $\frac d{dt} \varphi (t_0) < a$ then $G[\psi,a](t_0)= \psi(t_0)$.
\end{Proposition}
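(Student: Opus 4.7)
My plan is to argue by contradiction: suppose that $G[\psi,a](t_0) < \psi(t_0)$, and use the explicit min formula \eqref{defG} to produce a one--sided growth bound on $G[\psi,a]$ at $t_0$ that the subtangent $\varphi$ cannot accommodate.

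First, since $G[\psi,a](t) \leq \psi(t)$ always (taking $r = t$ in \eqref{defG}), the assumption $G[\psi,a](t_0) \neq \psi(t_0)$ gives a strict inequality $G[\psi,a](t_0) < \psi(t_0)$. By continuity of the map $r \mapsto \psi(r) + a(t_0 - r)$ on the compact interval $[T_0,t_0]$, the minimum in the definition of $G[\psi,a](t_0)$ is attained at some $r_0$, and the strict inequality forces $r_0 < t_0$.

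Next, for every $t \in (r_0, T_1)$ the value $r_0$ is still an admissible choice in the min defining $G[\psi,a](t)$, so
\[
G[\psi,a](t) \;\leq\; \psi(r_0) + a(t - r_0) \;=\; G[\psi,a](t_0) + a(t - t_0).
\]
Using that $\varphi$ is a subtangent to $G[\psi,a]$ at $t_0$ (so $\varphi \leq G[\psi,a]$ near $t_0$ with equality at $t_0$), this yields
\[
\varphi(t) - \varphi(t_0) \;\leq\; a(t - t_0) \qquad \text{for $t$ in a left neighborhood of $t_0$ with $t > r_0$.}
\]

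Dividing by $t - t_0 < 0$ and sending $t \to t_0^-$ (which is legitimate since $r_0 < t_0$) gives $\frac{d}{dt}\varphi(t_0) \geq a$, in direct contradiction with the hypothesis $\frac{d}{dt}\varphi(t_0) < a$. Hence $G[\psi,a](t_0) = \psi(t_0)$. The argument is entirely elementary; the only point that needs a little care is choosing the left limit (rather than the right), so as to exploit the sign reversal when dividing by $t-t_0$, and ensuring that $t$ stays above $r_0$ so that the bound from the min formula remains available.
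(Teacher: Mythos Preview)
Your proof is correct and follows essentially the same approach as the paper's: both locate a minimizer $r_0<t_0$ and use it to bound $G[\psi,a]$ from above by the affine function $t\mapsto G[\psi,a](t_0)+a(t-t_0)$ on a left neighborhood of $t_0$, which is incompatible with a subtangent of slope strictly less than $a$. The only cosmetic difference is that the paper works with the iterated formula \eqref{pfb2tris} and actually shows $w=G[\psi,a]$ is \emph{exactly} affine with slope $a$ on $[r_0,t_0]$, whereas you use \eqref{defG} directly and extract only the upper bound, which is all that is needed.
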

\begin{proof}
 We set $w = G[\psi,a]$. We take  $r_0$ realizing the equality in \eqref{pfb2tris} for  $t_0$ and assume $r_0 \in [T_0, t_0)$; for  $r \in [r_0,t_0]$, we have
 \begin{eqnarray*}
   w(t_0) &\leq&  w(r) + a \, (t_0-r) \leq w(r_0) +a  \, (r-r_0) +a  \, (t_0-r) \\
   &=& w(r_0)+ a  \, (t-r_0)
 \end{eqnarray*}
and since the first and last term in the above formula are equal, we conclude
\[ w(r)= w(r_0) + a  \, (r-r_0) \Txt{for $r \in [r_0,t_0]$}. \]
This is in contrast with the existence of a subtangent  $\varphi$ to $w$ at $t_0$  with $\frac d{dt}  \varphi(t_0)< a$.  We conclude  that $r_0=t_0$, which proves the assertion by  the very definition of $w$, see \eqref{defG}.

\end{proof}

\smallskip
We finally have:

\smallskip

\begin{Proposition}\label{ginetto} Let $w$ be a continuous function in $[T_0,T_1)$ and   $(\psi,a) \in \CC([T_0,T_1))\times (-\infty,0)$. Assume that $w(T_0) \geq \psi(T_0)$, and $w(t) \geq \psi(t)$ whenever there is a $C^1$ subtangent $\varphi$ to $w$ at $t$ with $\frac d{dt} \varphi(t) < a$.
Then
\[ w(t) \geq G[\psi,a](t)    \Txt{for any $t \in [T_0,T_1)$}\]
\end{Proposition}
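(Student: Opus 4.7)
The plan is to argue by contradiction. Suppose there exists $t_0 \in (T_0, T_1)$ with $w(t_0) < G[\psi,a](t_0)$; I want to locate a contact point to the left of $t_0$ at which the hypothesis on $w$ combines with the structural inequality \eqref{pfb2tris} for $G[\psi,a]$ to produce an estimate contradicting the assumption.

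The natural candidate is the last point where $w$ still dominates $G[\psi,a]$. I would set
\[ t_2 = \sup \{ t \in [T_0,t_0] : w(t) \geq G[\psi,a](t)\}. \]
Since $G[\psi,a](T_0) = \psi(T_0)$, the assumption $w(T_0) \geq \psi(T_0)$ puts $T_0$ in this set, so $t_2$ is well defined; continuity of $w - G[\psi,a]$ yields $w(t_2) = G[\psi,a](t_2)$ and, because $w(t_0) < G[\psi,a](t_0)$, also $t_2 < t_0$. On $(t_2,t_0]$ we then have $w(t) < G[\psi,a](t) \leq \psi(t)$, hence by the contrapositive of the hypothesis no $C^1$ subtangent $\varphi$ to $w$ at such a point $t$ can satisfy $\frac{d}{dt}\varphi(t) < a$; equivalently every $C^1$ subtangent to $w - a\,\cdot$ at $t \in (t_2,t_0)$ has nonnegative derivative.

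At this point Lemma \ref{decrescebis}, applied to $t \mapsto w(t) - a\,t$ on $[t_2,t_0]$, forces this function to be nondecreasing, so $w(t_0) \geq w(t_2) + a(t_0 - t_2)$. Combining with \eqref{pfb2tris} applied between $t_2$ and $t_0$,
\[ w(t_0) \geq w(t_2) + a(t_0 - t_2) = G[\psi,a](t_2) + a(t_0 - t_2) \geq G[\psi,a](t_0), \]
which contradicts the initial assumption and closes the argument.

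The main point that needs a little care is checking that Lemma \ref{decrescebis} applies with the right endpoints: its hypothesis is required only on the open interval $(t_2, t_0)$, which is precisely where I have control, and its conclusion extends to the closed interval by continuity, which is exactly what is needed to write $w(t_2) = G[\psi,a](t_2)$ on the right-hand side. Apart from this bookkeeping, the argument is elementary and avoids any direct differentiation of $G[\psi,a]$; all the non-smoothness of $G[\psi,a]$ is absorbed into the one-sided inequality \eqref{pfb2tris}.
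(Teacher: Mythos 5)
Your proof is correct and follows essentially the same route as the paper's: locate a last ``contact point'' by taking a supremum, then propagate the inequality forward with the monotonicity Lemma \ref{decrescebis} applied to $t \mapsto w(t)-a\,t$. The only cosmetic differences are that the paper argues directly rather than by contradiction, takes the supremum of the set of times where a subtangent with slope $<a$ exists (so that the hypothesis gives $w\geq\psi$ there, by continuity), and then invokes the definition \eqref{defG} of $G$ instead of the reformulation \eqref{pfb2tris}.
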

\begin{proof}  Given $t \in (T_0,T_1)$, we define $E$ as the set of points  $r \in (T_0,t)$ where there is  a subtangent $\phi$ to $w$  at $r$ with  $\frac d{dt} \phi(r) <a$.
We set
\[r_0= \left \{ \begin{array}{cc}
          \sup E & \txt{if $E \neq \emptyset$} \\
          T_0 & \txt{if $E = \emptyset$}
        \end{array} \right . \]
 By the assumption and $w(T_0) \geq \psi(T_0)$,  we have $w(r_0)\geq \psi(r_0)$ and by Lemma \ref{decresce}
\[w(t) \geq  w(r_0) +a \, (t-r_0) \geq \psi(r_0) +a \, (t-r_0).\]
Therefore $w(t) \geq G[\psi,a](t)$. This concludes the proof.
\end{proof}

\smallskip

\bigskip

\section{Hamilton Jacobi equations in an interval.}\label{interval}
In this section we consider a single Hamiltonian $H: [0,1] \times \R \to \R$  satisfying   {\bf (H1)}, {\bf (H2)}, {\bf (H3)}, {\bf (H4)} plus
\begin{equation}\label{assu}
   \max_{s \in [0,1]} \, \min_{p \in \R} H(s,p)  > 0,
\end{equation}
 see Remark \ref{stufa}, we further consider the equation
\begin{equation}\label{HJloc}  \tag{HJ}
    u_t + H(s,u')=0.
\end{equation}

We fix an open  rectangle $R = (a,b) \times (T_0,T_1) \subset Q$, possibly unbounded.  We call {\em admissible} an uniformly continuous function $w_0$ defined on $\partial_p^-R$ (resp. $\partial_p^+R$, $\partial_p R$) if there is an uniformly continuous subsolution of \eqref{HJloc} in $R$ agreeing with $w_0$ on $\partial_p^-R$  (resp. $\partial_p^+R$, $\partial_p R$).\\

\medskip
\subsection{Basic facts}     We start recalling some well known results on \eqref{HJloc}.  the first one is a   comparison result  when the boundary datum is assigned on the whole of parabolic boundary.

\smallskip

\begin{Theorem}\label{barles} Let $u$, $v$ be continuous sub and supersolution, respectively, to \eqref{HJloc}, in $R$ with  $u$  Lipschitz continuous.  If $u \leq v$ on $\partial_p R$ then $u \leq v$ in $R$.
\end{Theorem}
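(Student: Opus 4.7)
The plan is to argue by contradiction using Crandall--Lions doubling of variables, treating the unbounded-time case by slicing, and exploiting the Lipschitz regularity of $u$ to dispense with any coercivity assumption on the momenta of the doubled maximizers.

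First I would dispose of the potentially infinite time interval. Suppose for contradiction that $\sup_R(u-v)>0$. Then $\sup_{R\cap\{t<T\}}(u-v)>0$ for some $T\in(T_0,T_1)$, so one may work on the bounded rectangle $R_T=(a,b)\times(T_0,T)$. To push the doubled maximum away from the open top $[a,b]\times\{T\}$ (which is not part of $\partial_p R_T$) and simultaneously turn $u$ into a strict subsolution, I would replace $u$ by $\tilde u(s,t)=u(s,t)-\eta/(T-t)$ with $\eta>0$ chosen so small that $\sup_{R_T}(\tilde u-v)>0$; a direct computation shows that $\tilde u$ is a subsolution of $\tilde u_t+H(s,\tilde u')\leq -\eta/(T-t)^2$.

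Next I would consider, for $\varepsilon>0$,
\[
\Phi_\varepsilon(s,t,s',t')=\tilde u(s,t)-v(s',t')-\frac{1}{\varepsilon}\bigl(|s-s'|^2+|t-t'|^2\bigr)
\]
on $[a,b]\times[T_0,T)\times[a,b]\times[T_0,T)$. Because $\Phi_\varepsilon\to-\infty$ as $t,t'\to T^-$, a maximizer $(s_\varepsilon,t_\varepsilon,s'_\varepsilon,t'_\varepsilon)$ exists with $t_\varepsilon,t'_\varepsilon$ bounded away from $T$. Standard arguments (the max being at least $\sup(\tilde u-v)>0$, and the penalty forcing $|s_\varepsilon-s'_\varepsilon|+|t_\varepsilon-t'_\varepsilon|\to 0$) show the accumulation points lie in $\overline{R_T}\setminus\partial_p R_T$; the hypothesis $u\leq v$ on $\partial_pR$ rules out the lateral and bottom parts, so for $\varepsilon$ small both points sit in the open interior of $R_T$. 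Testing $\tilde u$ from above and $v$ from below with the canonical quadratic penalty, and subtracting the two viscosity inequalities, yields
\[
\frac{\eta}{(T-t_\varepsilon)^2}+H(s_\varepsilon,p_\varepsilon)-H(s'_\varepsilon,p_\varepsilon)\leq 0,\qquad p_\varepsilon=\frac{2(s_\varepsilon-s'_\varepsilon)}{\varepsilon}.
\]

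The key quantitative input is then the Lipschitz bound on $u$: comparing $\Phi_\varepsilon(s_\varepsilon,t_\varepsilon,s'_\varepsilon,t'_\varepsilon)$ with $\Phi_\varepsilon(s'_\varepsilon,t'_\varepsilon,s'_\varepsilon,t'_\varepsilon)$ gives
\[
\frac{1}{\varepsilon}\bigl(|s_\varepsilon-s'_\varepsilon|^2+|t_\varepsilon-t'_\varepsilon|^2\bigr)\leq \Lip u\,(|s_\varepsilon-s'_\varepsilon|+|t_\varepsilon-t'_\varepsilon|)+O(|t_\varepsilon-t'_\varepsilon|),
\]
from which $|p_\varepsilon|$ stays bounded by a constant $C$ independent of $\varepsilon$. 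Since $H$ is continuous on the compact set $[a,b]\times[-C,C]$ and $|s_\varepsilon-s'_\varepsilon|\to 0$, the difference $H(s_\varepsilon,p_\varepsilon)-H(s'_\varepsilon,p_\varepsilon)$ vanishes in the limit, leaving $\eta/(T-t_*)^2\leq 0$, the desired contradiction. The main delicate point I anticipate is the bookkeeping needed to simultaneously locate the maximum away from the top (requiring the $\eta/(T-t)$ penalty) and away from $\partial_pR_T$ (requiring smallness of $\varepsilon$), and to derive the momentum bound from Lipschitz regularity in spite of the extra $\eta$-term; no coercivity of $H$ is used, which is why hypothesis {\bf(H3)} plays no role in this step.
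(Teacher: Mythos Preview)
The paper does not actually prove Theorem~\ref{barles}: it is presented in Section~\ref{interval} among the ``well known results on \eqref{HJloc}'' that are merely recalled (the label itself points to a standard reference). Only the extension to uniformly continuous subsolutions, Theorem~\ref{nobarles}, is proved in Appendix~\ref{A}, and that proof works by reducing to Theorem~\ref{barles} via $t$--partial sup--convolutions.

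Your proposal supplies exactly the classical Crandall--Lions doubling-of-variables argument that underlies this recalled result, and it is correct. You have isolated the one nonroutine point accurately: the Lipschitz bound on $u$ lets you compare $\Phi_\varepsilon$ at the maximizer with its value at $(s'_\varepsilon,t'_\varepsilon,s'_\varepsilon,t'_\varepsilon)$, yielding a bound on $|p_\varepsilon|$ independent of $\varepsilon$; after that, continuity of $H$ alone (assumption {\bf (H1)}) closes the argument. Your observation that {\bf (H3)} is not used is right, and in fact neither are {\bf (H2)}, {\bf (H4)} nor \eqref{assu}. One minor point of bookkeeping worth making explicit: to justify that the $\eta/(T-t)$ correction contributes only $O(|t_\varepsilon-t'_\varepsilon|)$ in the momentum estimate you need $t_\varepsilon,t'_\varepsilon$ to stay uniformly away from $T$ \emph{before} the Lipschitz comparison; this follows from the lower bound $\Phi_\varepsilon\geq\sup(\tilde u-v)>0$ at the maximizer combined with the boundedness of $u$ and $v$ on $\overline{R_T}$, and you have already noted it---just be sure to invoke it in the right order.
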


\smallskip

This result can be generalized using $t$-- partial sup--convolutions, see Appendix \ref{AA}.

\begin{Theorem}\label{nobarles} Let $u$, $v$ be continuous sub and supersolution, respectively, to \eqref{HJloc}, in $R$ with  $u$  uniformly continuous.  If $u \leq v$ on $\partial_p R$ then $u \leq v$ in $R$.
\end{Theorem}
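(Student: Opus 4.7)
The plan is to reduce to the Lipschitz setting of Theorem \ref{barles} through a $t$-partial sup-convolution. For $\eps>0$ I would define
\[ u^\eps(s,t)=\sup_{\tau\in[T_0,T_1)}\Bigl\{u(s,\tau)-\frac{(t-\tau)^2}{2\eps}\Bigr\}. \]
By the properties collected in Appendix \ref{AA}, $u^\eps$ is Lipschitz continuous in $t$ (with constant blowing up as $\eps\to 0$), it stays a viscosity subsolution of \eqref{HJloc} in $R$ because the Hamiltonian is $t$-independent, and $u^\eps\to u$ uniformly on $\overline{R}$ as $\eps\to 0^+$ thanks to the uniform continuity of $u$ (an optimizing $\tau$ has $|\tau-t|\to 0$ uniformly in $(s,t)$).

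Next I would upgrade the Lipschitz regularity from $t$ to both variables. Writing the a.e.\ subsolution inequality as $H(s,u^\eps_s)\le -u^\eps_t\le L_\eps$, the coercivity assumption {\bf (H3)} forces $|u^\eps_s|\le M_\eps$ almost everywhere, so $u^\eps$ is globally Lipschitz in $R$ (or on any bounded subrectangle). This is the point at which the $t$-partial sup-convolution, together with coercivity, manufactures a Lipschitz competitor out of a merely uniformly continuous one.

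To take care of the boundary datum, I would first reduce to a bounded rectangle: if $T_1=+\infty$, comparison on every $R_T=(a,b)\times(T_0,T)$ with $T<+\infty$ yields comparison on $R$ because $\partial_p R_T\subset \partial_p R$. With $R$ now bounded, $v$ is uniformly continuous on $\overline{\partial_p R}$, and the uniform convergence $u^\eps\to u$ on that set lets me pick, for any $\eta>0$, an $\eps$ so small that $u^\eps-\eta\le v$ on $\partial_p R$. Since $u^\eps-\eta$ is still a Lipschitz subsolution of \eqref{HJloc}, Theorem \ref{barles} yields $u^\eps-\eta\le v$ in $R$, and I conclude by sending first $\eps\to 0^+$ and then $\eta\to 0^+$.

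The step I expect to require the most care is the preservation of the boundary inequality after the sup-convolution: $u^\eps$ is systematically above $u$ (it can pick optimal times $\tau>T_0$ near the initial edge $\{t=T_0\}$), so the shift by $\eta$ and the uniform convergence on $\partial_p R$ are exactly what absorbs the defect. The rest of the argument is a routine exploitation of coercivity and the already available comparison principle with a Lipschitz subsolution.
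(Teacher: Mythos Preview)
Your overall strategy is the paper's, but there is a gap in the step where you assert that $u^\eps$ ``stays a viscosity subsolution of \eqref{HJloc} in $R$.'' Proposition~\ref{propsupco}{\bf (ii)} does \emph{not} give this: it only yields the subsolution property on the shrunken rectangle $(a,b)\times(T_0+T_\de,T_1)$ with $T_\de=\OO(\sqrt\de)$. The reason is that for $t$ close to $T_0$ the maximizer $\tau$ in the sup--convolution can be the endpoint $T_0$; in that case the standard argument --- a supertangent $\varphi$ to $u^\eps$ at $(s_0,t_0)$ produces a supertangent to $u$ at $(s_0,\tau)$ --- lands on the parabolic boundary, where $u$ is not assumed to satisfy the equation, and the derivative identity $\varphi_t(s_0,t_0)=(\tau-t_0)/\eps$ breaks down because the maximum is achieved at a boundary point of the $\tau$--interval. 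So the subsolution property can genuinely fail in the strip $[a,b]\times[T_0,T_0+T_\de]$, and your application of Theorem~\ref{barles} on all of $R$ is not justified.

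The fix is exactly what the paper does: apply Theorem~\ref{barles} on $(a,b)\times(T_0+T_\de,T_1)$ instead, and use the uniform convergence $u^\de\to u$ together with $u\le v$ on $\partial_p R$ to secure $u^\de<v+\eta$ on the new bottom edge $[a,b]\times\{T_0+T_\de\}$ (indeed on the whole strip $[a,b]\times[T_0,T_0+T_\de]$) for $\de$ small. With that adjustment your argument coincides with the paper's proof; the remaining steps --- upgrading to Lipschitz in $s$ via coercivity, and the $\eta$--shift to absorb the boundary defect --- are already handled inside Proposition~\ref{propsupco} and are fine.
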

The proof is in Appendix \ref{A}.

\smallskip

The next proposition says that the vertical (in time) gluing of two (sub/super) solutions is still a (sub/super) solution.

\smallskip

\begin{Proposition} \label{melaton} Let $t^* \in (T_0,T_1)$.   Let $u$ be a  continuous function from $R$ to $\R$. Assume $u$ to be (sub/ super)solution of \eqref{HJloc} in $(a,b) \times (T_0,t^*) $ and in $(a,b) \times (t^*,T_1)$. Then $u$ is (sub/super)solution in $R$.
 \end{Proposition}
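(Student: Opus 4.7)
My plan is to reduce the problem to checking the viscosity property at points on the horizontal interface $\{(s,t^*) : s \in (a,b)\}$, since at any $(s_0,t_0) \in R$ with $t_0 \neq t^*$, the point lies in one of the two open rectangles where the hypothesis directly applies. So I fix $s_0 \in (a,b)$ and verify the (sub/super)solution condition at $(s_0,t^*)$. Throughout I will work on a closed ball $\bar B \subset R$ centered at $(s_0,t^*)$ of small radius $r$ so that $\bar B \subset R$.

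For the subsolution case, let $\varphi$ be a $C^1$ supertangent to $u$ at $(s_0,t^*)$; after the standard reduction (subtracting a small quadratic perturbation), assume $\varphi$ is strict, so that $u - \varphi$ has a strict local maximum equal to $0$ at $(s_0,t^*)$ on $\bar B$. The key device is the singular perturbation
\[
\varphi_\epsilon(s,t) = \varphi(s,t) + \frac{\epsilon}{t^* - t}, \qquad (s,t) \in \bar B \cap \{t < t^*\},
\]
which blows up as $t \to t^{*-}$. Hence $u - \varphi_\epsilon \to -\infty$ near the interface, forcing its maximum on $\bar B \cap \{t < t^*\}$ to be attained at some interior point $(s_\epsilon,t_\epsilon)$ of $R_1:=(a,b) \times (T_0,t^*)$. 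Applying the subsolution property at $(s_\epsilon,t_\epsilon)$ and observing $(\varphi_\epsilon)_t = \varphi_t + \epsilon/(t^*-t)^2$, $\varphi_\epsilon' = \varphi'$, one gets
\[
\varphi_t(s_\epsilon,t_\epsilon) + H(s_\epsilon,\varphi'(s_\epsilon,t_\epsilon)) \leq -\frac{\epsilon}{(t^*-t_\epsilon)^2} \leq 0.
\]
Standard considerations show the maximum value tends to $0$ as $\epsilon \to 0$, forcing $(s_\epsilon,t_\epsilon) \to (s_0,t^*)$; continuity of $\varphi_t$, $\varphi'$ and $H$ then yields the subsolution inequality at $(s_0,t^*)$.

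For the supersolution case, the argument is dual but a little more delicate. Let $\varphi$ be a strict $C^1$ subtangent at $(s_0,t^*)$; now $u-\varphi$ has a strict local minimum $0$ at $(s_0,t^*)$ on $\bar B$. I consider instead the linear perturbation $\tilde\varphi_\epsilon(s,t) = \varphi(s,t) + \epsilon(t^* - t)$ and examine the minimum of $u - \tilde\varphi_\epsilon$ on the closed set $\bar B \cap \{t \leq t^*\}$. Along the slice $s=s_0$, the value at $(s_0,t^* - \tau)$ is $(u-\varphi)(s_0, t^*-\tau) - \epsilon\tau$, which for $\tau$ small becomes strictly negative (after refining $\varphi$ by a quadratic so that $(u-\varphi)(s_0, t^*-\tau) = o(\tau)$). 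Since the value at $(s_0,t^*)$ itself is $0$, the minimum is strictly negative and is attained at some $(s_\epsilon,t_\epsilon)$ with $t_\epsilon < t^*$; the strict minimum property forces $(s_\epsilon,t_\epsilon) \to (s_0,t^*)$ and ultimately $(s_\epsilon,t_\epsilon) \in R_1$ for $\epsilon$ small. Applying the supersolution property there to the subtangent $\tilde\varphi_\epsilon$ yields
\[
\varphi_t(s_\epsilon,t_\epsilon) + H(s_\epsilon,\varphi'(s_\epsilon,t_\epsilon)) \geq \epsilon,
\]
and passing to the limit gives the desired inequality.

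The main obstacle in both cases is verifying that the perturbed extremum actually escapes the interface $\{t = t^*\}$ into the open rectangle where the hypothesis applies. For subsolutions this is automatic thanks to the $\epsilon/(t^* - t)$ blow-up; for supersolutions it requires the preliminary refinement of $\varphi$ by a strictly positive quadratic to guarantee that $(u-\varphi)(s_0,t^* - \tau)/\tau \to 0$, so that the linear penalty $-\epsilon(t^* - t)$ suffices to push the minimum strictly below $t^*$. With that in hand, the viscosity inequalities pass to the limit by continuity of $H$ and the $C^1$ regularity of $\varphi$.
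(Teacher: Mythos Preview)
The paper records Proposition~\ref{melaton} among the ``basic facts'' of Section~\ref{interval} and does not supply a proof, so there is nothing to compare line by line. Your subsolution argument is the standard and correct one: the singular barrier $\varphi + \eps/(t^*-t)$ forces the maximizer of $u-\varphi_\eps$ into the open lower rectangle, the viscosity inequality there passes to the limit, and the conclusion follows.

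The supersolution argument, however, contains a genuine gap. You assert that a quadratic refinement of the subtangent $\varphi$ yields $(u-\varphi)(s_0,t^*-\tau)=o(\tau)$. This is not achievable in general: replacing $\varphi$ by $\varphi \pm C\,|(s,t)-(s_0,t^*)|^2$ only modifies $(u-\varphi)(s_0,t^*-\tau)$ by $\pm C\tau^2$, which is itself $o(\tau)$ and therefore cannot kill a first--order term. If $u(s_0,\cdot)$ has a corner at $t^*$, say $u(s_0,t)-\varphi(s_0,t) \sim c\,|t-t^*|$ with $c>0$, then for every $\eps<c$ the quantity $(u-\varphi)(s_0,t^*-\tau)-\eps\tau$ stays nonnegative for small $\tau$, and the minimum of $u-\tilde\varphi_\eps$ on $\bar B\cap\{t\le t^*\}$ remains pinned at $(s_0,t^*)$. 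Your linear perturbation thus fails to push the minimizer off the interface, and the argument stalls.

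The remedy is to reuse the device from your subsolution case with the opposite sign: set $\varphi_\eps(s,t)=\varphi(s,t)-\dfrac{\eps}{t^*-t}$ on $\bar B\cap\{t<t^*\}$. Then $u-\varphi_\eps=(u-\varphi)+\eps/(t^*-t)\to+\infty$ as $t\to t^{*-}$, so the minimum is attained at some interior point $(s_\eps,t_\eps)$ with $t_\eps<t^*$; evaluating at $(s_0,t^*-\sqrt\eps)$ shows the minimum value tends to $0$, whence $(s_\eps,t_\eps)\to(s_0,t^*)$ by strictness. Since $(\varphi_\eps)_t=\varphi_t-\eps/(t^*-t)^2$ and $(\varphi_\eps)'=\varphi'$, the supersolution inequality in the lower rectangle gives
\[
\varphi_t(s_\eps,t_\eps)+H\big(s_\eps,\varphi'(s_\eps,t_\eps)\big)\ \ge\ \frac{\eps}{(t^*-t_\eps)^2}\ \ge\ 0,
\]
and passing to the limit yields the desired inequality at $(s_0,t^*)$.
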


 \smallskip

 We proceed stating  a result on maximal subsolutions.
\smallskip

\begin{Proposition} \label{2021} Let $w_0$ be a   continuous  admissible datum  on  $\partial^+_p R$ (resp. $\partial_p^- R$, $[a,b] \times \{T_0\}$). The maximal subsolution of \eqref{HJloc} attaining the datum $w_0$ on $\partial_p^+ R$ (resp. $\partial_p^- R$, $[a,b] \times \{T_0\}$), denoted by $w$, is characterized  by the properties of being solution in $R$, and to satisfy for any $(s^*,t^*) \in \partial_p R \setminus \partial^+_p R$ (resp. $\partial_p R \setminus \partial^-_p R$, $\partial_p R \setminus \big ( [a,b] \times \{T_0\} \big )$ any   subtangent $\varphi$,  constrained  to $\ov R$, to $w$  at $(s^*,t^*)$
\[\varphi_t(s^*,t^*) + H(1,\varphi'(s^*,t^*)) \geq 0.\]
\end{Proposition}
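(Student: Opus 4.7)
The plan is to prove existence of a maximal subsolution $w$ via a Perron--type construction, verify the two listed properties using localized perturbation arguments, and then establish the characterization by comparison with constrained supersolutions.

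First, define $w$ as the pointwise supremum of all uniformly continuous subsolutions of \eqref{HJloc} in $R$ attaining $w_0$ on $\partial_p^+ R$; admissibility guarantees this family is nonempty, and coercivity \textbf{(H3)} plus \textbf{(H4)} together with the uniform continuity of $w_0$ yield uniform local bounds forcing $w$ to be uniformly continuous. A standard argument shows the sup of subsolutions is a subsolution, hence $w$ is itself an admissible competitor. Continuity of $w_0$ ensures that $w$ attains the datum on $\partial_p^+ R$. This takes care of maximality.

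Next I would prove the two characterizing properties by a uniform contradiction scheme. Suppose $w$ fails to be a supersolution at an interior point $(s_0,t_0) \in R$, or that at some $(s^*,t^*) \in \partial_p R \setminus \partial_p^+ R$ there is a subtangent $\varphi$ constrained to $\ov R$ with $\varphi_t(s^*,t^*) + H(s^*,\varphi'(s^*,t^*)) < 0$. In either case, by continuity one has a strict classical subsolution $\varphi_\eps := \varphi + \eps - \delta(|s-s^*|^2 + |t-t^*|^2)$ of \eqref{HJloc} in a small (half-)ball $U$ around the point, lying strictly below $w$ on $\partial U \cap \ov R$ and strictly above $w$ at the reference point. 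Replacing $w$ with $w \vee \varphi_\eps$ on $U \cap \ov R$ produces a pointwise strictly larger subsolution still agreeing with $w_0$ on $\partial_p^+ R$ (because the modification is supported away from $\partial_p^+ R$). This contradicts the maximality of $w$. Hence $w$ is a solution in $R$ and satisfies the constrained supersolution inequality on $\partial_p R \setminus \partial_p^+ R$.

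For the characterization, let $v$ be any continuous function satisfying both properties. Since $v$ is in particular a subsolution of \eqref{HJloc} in $R$ with $v = w_0$ on $\partial_p^+ R$, maximality yields $v \leq w$. The reverse inequality requires a comparison principle in which the supersolution's property is valid up to and including the boundary part $\partial_p R \setminus \partial_p^+ R$ through constrained subtangents. I would obtain this by regularizing $w$ with the $t$-partial sup-convolution $w^\lambda$ from Appendix \ref{AA}, which is Lipschitz and a subsolution on a slightly shrunk rectangle; one then invokes the Lipschitz comparison Theorem \ref{barles} after observing that the maximum of $w^\lambda - v$ on the closure of the shrunk rectangle cannot be realized on $\partial_p R \setminus \partial_p^+ R$, since a maximum there would produce a constrained subtangent to $v$ contradicting the constrained supersolution inequality. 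On $\partial_p^+ R$ both functions agree with $w_0$ up to $o(1)$ in $\lambda$, so the comparison yields $w^\lambda \leq v + o(1)$. Passing to the limit $\lambda \to 0$ gives $w \leq v$, hence equality.

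The main obstacle I anticipate is the last step: running the comparison argument with only partial parabolic boundary data and a constrained supersolution condition on the complementary portion of $\partial_p R$, while the subsolution $w$ is only uniformly (not Lipschitz) continuous. Reducing to the Lipschitz case through $t$-sup-convolution is clean, but one must carefully check that the constrained supersolution inequality for $v$ at $\partial_p R \setminus \partial_p^+ R$ really precludes the comparison maximum from being attained on that portion of the boundary, which is exactly the role of the constraint subtangent formulation.
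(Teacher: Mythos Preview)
The paper does not prove Proposition \ref{2021}: it appears in the ``Basic facts'' subsection among results the author explicitly \emph{recalls} as well known, and no argument is supplied. There is therefore no proof in the paper to compare your proposal against.

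Your outline follows the standard Perron--Ishii route and is sound in its architecture: the supremum over admissible subsolutions is a subsolution attaining the datum; failure of the interior supersolution property or of the constrained boundary inequality permits a local bump that contradicts maximality; and conversely any $v$ with both properties dominates every subsolution by a state--constrained comparison. The first two parts are routine.

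The gap is exactly where you anticipate it. If $w^\lambda - v$ attains its maximum at a point $(s^*,t^*)$ of the lateral boundary, then $w^\lambda$ (shifted by the maximum value) is a constrained subtangent to $v$ there, but it is merely Lipschitz, not $C^1$. The constrained supersolution hypothesis on $v$ is stated for $C^1$ test functions, so you cannot read off a contradiction directly. The $t$--sup--convolution of Appendix \ref{AA} regularizes only in the time variable; in the space variable $w^\lambda$ need not be semiconvex, so it need not admit a supporting affine plane from below, and picking an element of $\partial w^\lambda(s^*,t^*)$ does not by itself yield a $C^1$ subtangent to $v$. What is actually needed here is either a genuine doubling--of--variables argument adapted to the state--constrained setting (so that the constrained supersolution property of $v$ is used inside the doubled problem rather than via a single touching function), or an additional space regularization of $w^\lambda$ making it semiconvex in both variables before running the contact argument. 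Both are standard, but neither is the one--line ``observation'' you describe; as written, the last step does not close.
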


\smallskip

We derive the following stability property:

\begin{Corollary} \label{2021bis} Let $H_n$ be a sequence of Hamiltonians in $\ov R$ satisfying the same assumptions of $H$ and locally uniformly convergent  to $H$, and $w^0_n$ a sequence of  continuous initial data in $\partial^-_p R$ locally uniformly convergent to $w_0$. Then the maximal subsolution $w_n$ of \eqref{HJloc}, with $H_n$ in place of $H$, attaining the datum $w^0_n$ on $\partial_p^-R$ locally uniformly converges in $\ov R$ to the maximal subsolution $w$ of \eqref{HJloc} agreeing with $w_0$ on $\partial_p^-R$.
\end{Corollary}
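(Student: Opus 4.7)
The plan is to establish the two-sided sandwich estimate
\[
 w(s,t) - \eps_n \, t - \de_n \;\leq\; w_n(s,t) \;\leq\; w(s,t) + \eps_n \, t + \de_n \qquad \hbox{in } \ov R,
\]
where, on a chosen compact $K \subset \ov R$, $\de_n := |w^0_n - w_0|_{\infty}$ on $\partial_p^- R \cap K$, and $\eps_n := \sup \{|H_n(s,p) - H(s,p)| : s \in [a,b],\ |p| \leq L\}$ with $L$ a uniform Lipschitz constant in $s$ for $w$ and $\{w_n\}_{n \geq n_0}$ (chosen below). Both $\de_n$ and $\eps_n$ tend to zero by the assumptions, so the sandwich estimate immediately yields locally uniform convergence $w_n \to w$ on $\ov R$.

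Each inequality follows by the same device. For the left one: since $w$ is a viscosity subsolution of \eqref{HJloc}, and any supertangent to $w$ at an interior point has $s$-slope bounded by the Lipschitz constant $L$, the elementary estimate $H_n(s,p) \leq H(s,p) + \eps_n$ on $[a,b] \times [-L,L]$ shows that $w - \eps_n \, t - \de_n$ is a viscosity subsolution of \eqref{HJloc} with $H_n$ in place of $H$. Since $w = w_0$ on $\partial_p^- R$,
\[
 w - \eps_n \, t - \de_n \;\leq\; w_0 - \de_n \;\leq\; w^0_n \qquad \hbox{on } \partial_p^- R.
\]
The pointwise maximum $v := \max\bigl(w_n,\, w - \eps_n t - \de_n\bigr)$ is therefore a continuous subsolution of the $H_n$-equation in $R$ (since the maximum of two continuous viscosity subsolutions of \eqref{HJloc} is again a subsolution), and it coincides with $w^0_n$ on $\partial_p^- R$. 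Invoking the maximality of $w_n$ asserted in Proposition \ref{2021} gives $v \leq w_n$, hence $w - \eps_n t - \de_n \leq w_n$ in $\ov R$. The symmetric construction, in which $w_n - \eps_n t - \de_n$ is shown to be a subsolution of \eqref{HJloc} with $H$ itself and is taken in maximum with $w$, followed by an appeal to the maximality of $w$, yields the other side of the sandwich.

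The main technical point, and where I expect all of the real work to lie, is the choice of $L$. The coercivity {\bf (H3)} of $H$ together with locally uniform convergence $H_n \to H$ shows that the family $\{H_n\}_{n \geq n_0} \cup \{H\}$ is uniformly coercive in $p$, uniformly in $s \in [a,b]$, for some $n_0$. Standard interior regularity for coercive Hamilton--Jacobi equations then provides a Lipschitz constant $L$ in the $s$-variable, valid on any compact subset of $R$ for $w$ and for all $w_n$ with $n \geq n_0$; this is precisely what legitimates comparing $H_n$ and $H$ only on momenta of size $\leq L$, and, with that in hand, $\eps_n \to 0$ is an immediate consequence of the locally uniform convergence $H_n \to H$.
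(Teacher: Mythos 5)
The paper states this corollary without proof (as a by-product of Proposition \ref{2021} and the maximal-subsolution machinery), so there is nothing to compare line by line; your sandwich
$w-\eps_n t-\de_n\leq w_n\leq w+\eps_n t+\de_n$ is a natural route and would work \emph{for Lipschitz data}. But as written there is a genuine gap at the step you yourself flag as carrying ``all of the real work'': the existence of a uniform spatial Lipschitz constant $L$ for $w$ and the $w_n$. The data $w^0_n,w_0$ are only continuous, Proposition \ref{maxistaruni} yields only uniform continuity of the maximal subsolutions, and the ``standard interior regularity'' you invoke is \emph{false} for convex Hamiltonians that are coercive but not superlinear --- which is exactly the regime of this paper (the author explicitly works without the growth needed for the Fenchel transform). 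Concretely, take $H(s,p)=|p|+1$ (this satisfies {\bf (H1)}--{\bf (H4)} and \eqref{assu}) and initial datum $w_0(s)=-1+\sqrt{|s-s_0|}$: the maximal subsolution is $u(s,t)=-t-1+\sqrt{(|s-s_0|-t)_+}$, whose spatial slope blows up along the interior line $|s-s_0|=t$. So no such $L$ exists, $\eps_n$ cannot be defined, and the claim that $w-\eps_n t-\de_n$ is a subsolution of the $H_n$-equation (which needs $|\varphi'|\leq L$ for every supertangent $\varphi$ in all of $R$) collapses. The repair is to first replace $w^0_n$ and $w_0$ by the Lipschitz approximations of Lemma \ref{hopf} (with error controlled, uniformly in $n$, by the common continuity modulus) and use Corollary \ref{cormaxistaruni} to transfer that error to the maximal subsolutions; for Lipschitz data Proposition \ref{maxistar} and Remark \ref{fabro} give genuine Lipschitz bounds \eqref{trump3}--\eqref{trump4}, which are uniform in $n$ for $n$ large because local uniform convergence of convex Hamiltonians to a coercive one forces uniform coercivity. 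Your sandwich then runs correctly at the Lipschitz level, and a triangle inequality concludes.

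A secondary, fixable issue: when $T_1=+\infty$ you define $\de_n$ as a supremum over $\partial_p^-R\cap K$ for a compact $K$, but the maximality argument ($v:=\max(w_n,\,w-\eps_n t-\de_n)\leq w_n$) requires $v\leq w^0_n$ on \emph{all} of $\partial_p^-R$, where local uniform convergence gives no control. You need to truncate to $(a,b)\times(T_0,T)$ and justify that the maximal subsolution on the truncated rectangle, with the restricted datum, coincides with the restriction of $w_n$; this is plausible (restriction in one direction, an extension of subsolutions in the other) but is not automatic and is not supplied.
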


\smallskip

We finally record for later use:

\begin{Proposition}\label{hoje} Assume that $u$ is a Lipschitz continuous subsolution to \eqref{HJloc} in $R$. Assume further that
\begin{equation}\label{hoje1}
\Lip u(s,\cdot) \leq M \Txt{for some $M >0$, any $s \in [a,b]$}.
\end{equation}
Then $u (\cdot,t)$ is subsolution to
\begin{equation}\label{hoje2}
 H(s,v') \leq M \Txt{in $(a,b)$, for any $t \in \ov{[T_0,T_1)}$.}
\end{equation}
\end{Proposition}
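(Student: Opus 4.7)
The idea is to reduce the claim, via the classical equivalence between viscosity and a.e.\ sub-solutions for Lipschitz functions of convex Hamilton--Jacobi equations, to a statement that holds at almost every time, and then to extend to every time by stability of viscosity sub-solutions under uniform limits.

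Since $u$ is Lipschitz on $R$ it is differentiable a.e.\ by Rademacher's theorem, and the viscosity subsolution property for \eqref{HJloc} is equivalent to the pointwise inequality
$u_t(s,t) + H(s, u'(s,t)) \leq 0$ for a.e.\ $(s,t) \in R$.
Combined with \eqref{hoje1}, which forces $|u_t(s,t)| \leq M$ wherever $u_t$ exists, this gives $H(s, u'(s,t)) \leq -u_t(s,t) \leq M$ for a.e.\ $(s,t) \in R$. Fubini's theorem then yields a full-measure subset $E \subset (T_0, T_1)$ such that, for every $t_0 \in E$, the $t_0$-section of the differentiability set has full measure in $(a,b)$, the one-dimensional Lipschitz function $u(\cdot, t_0)$ has a.e.\ derivative coinciding with $u'(\cdot, t_0)$, and $H(s, (u(\cdot, t_0))'(s)) \leq M$ for a.e.\ $s \in (a,b)$. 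By convexity of $H$ in the momentum (hypothesis \textbf{(H2)}), Lipschitz a.e.\ sub-solutions of a first-order equation are viscosity sub-solutions, so $u(\cdot, t_0)$ solves \eqref{hoje2} in the viscosity sense for every $t_0 \in E$.

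To extend this to an arbitrary $t_0 \in \ov{[T_0, T_1)}$, I would pick a sequence $t_n \to t_0$ in $E$; the uniform-in-$s$ bound $\Lip u(s, \cdot) \leq M$ forces $u(\cdot, t_n) \to u(\cdot, t_0)$ uniformly on $(a,b)$, and standard stability of viscosity sub-solutions under uniform convergence then yields the claim at $t_0$.

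The main technical input, where convexity is genuinely used, is the equivalence between a.e.\ and viscosity sub-solutions for Lipschitz functions of a convex first-order equation (obtained, for instance, by mollification and Jensen's inequality applied to $H(s, \cdot)$); the remaining steps are measure-theoretic bookkeeping and a routine stability passage, with no need to construct delicate two-dimensional test functions that smoothly dominate $u$ on both sides of the slice $\{t = t_0\}$ --- a construction that would otherwise be obstructed by the non-differentiability of $|t - t_0|$.
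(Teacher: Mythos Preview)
Your proof is correct and follows the same three-step skeleton as the paper: deduce $H(s,u'(s,t))\le M$ at a.e.\ point of $R$ from the subsolution property together with $|u_t|\le M$, use the convexity of $H$ in $p$ to upgrade the a.e.\ inequality to a viscosity subsolution statement for the slices $u(\cdot,t)$, and finish by stability under uniform limits. The one genuine difference is in the upgrade step. The paper invokes Clarke's generalized-gradient characterization \cite[Proposition~2.3.16]{C1} to pass directly from the a.e.\ bound to $H(s,p)\le M$ for every $p\in\partial u(\cdot,t)(s)$ at \emph{every} interior time, reserving stability only for the boundary times. You instead use Fubini to secure a full-measure set $E$ of good times, apply the one-dimensional equivalence between Lipschitz a.e.\ and viscosity subsolutions of convex equations on each such slice, and then use stability for \emph{all} remaining times, interior and boundary alike. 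Your route trades the Clarke machinery for one extra approximation argument; both are standard and neither is materially shorter.
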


The proof is in Appendix \ref{A}.

\medskip

\subsection{Maximal subsolutions}

\begin{Proposition}\label{maxistar}  Let $w_0$ be a Lipschitz  continuous    boundary  datum assigned on  $\partial_p^- R$. Then the  function
\begin{equation}\label{maxistar1}
  v(s,t) := \sup \{u(s,t) \mid u \;\hbox{un. cont. subsoln of \eqref{HJloc} in} \; R, u \leq w_0 \;\hbox{on $\partial_p^- R$}\}
\end{equation}
is a Lipschitz continuous  solution to \eqref{HJloc}  in $R$  with
\begin{eqnarray}
  |v(s,t_1) - v(s,t_2)| &\leq& M_0 \, |t_1-t_2|   \label{trump1bis}\\
  |v(s_1,t) - v(s_2,t)| &\leq& L_0 \, |s_1-s_2|, \label{trump2bis}
\end{eqnarray}
 where
\begin{eqnarray}
  M_0 &=& \min \{m \mid H(s,w'_0(\cdot,T_0)) \leq m \;\hbox{a.e. $s$}\} \vee \Lip w_0(a,\cdot)   \label{trump3}\\
  L_0 &=& \max\{|p| \mid H(s,p) \leq M_0 \, \forall s\}  \label{trump4}.
\end{eqnarray}
In addition, it  coincides with $w_0$ in $[a,b] \times \{T_0\}$.
\end{Proposition}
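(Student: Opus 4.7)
\emph{Plan.} The proof splits into four logical moves; the only delicate one is the time Lipschitz bound.

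\emph{Step 1 (admissible barrier and identification of the initial trace).} The natural competitor in \eqref{maxistar1} is the affine-in-time function $\phi(s,t):=w_0(s,T_0)-M_0(t-T_0)$. Since $w_0(\cdot,T_0)$ is Lipschitz with $H(s,w_0'(s,T_0))\le M_0$ a.e., one has $\phi_t+H(s,\phi')\le 0$ a.e., so $\phi$ is an a.e. (hence viscosity) subsolution of \eqref{HJloc}. The bound $\Lip w_0(a,\cdot)\le M_0$ gives $\phi\le w_0$ on $\{a\}\times[T_0,T_1)$, while equality holds on $[a,b]\times\{T_0\}$. Thus $\phi$ is admissible and $v\ge \phi$; combined with the fact that every admissible subsolution is nonincreasing in $t$ (Remark \ref{stufa}), which yields $v(\cdot,t)\le w_0(\cdot,T_0)$, one concludes that $v(\cdot,T_0)=w_0(\cdot,T_0)$.

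\emph{Step 2 (time Lipschitz estimate — the crux).} Fix $h\in(0,T_1-T_0)$ and an arbitrary admissible subsolution $u$ in the family defining $v$. Consider
\[
w_h(s,t)=\begin{cases}\phi(s,t), & t\in[T_0,T_0+h],\\ \max\{\phi(s,t),\,u(s,t-h)-M_0 h\}, & t\in[T_0+h,T_1).\end{cases}
\]
Continuity at $t=T_0+h$ holds because $u(s,T_0)\le w_0(s,T_0)=\phi(s,T_0+h)+M_0 h$, so the max reduces to $\phi$ at the interface. Each branch is a uniformly continuous subsolution of \eqref{HJloc} (time-shifts and additive constants preserve the subsolution property, and the maximum of two subsolutions is a subsolution), so Proposition \ref{melaton} yields that $w_h$ is a subsolution throughout $R$. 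The inequality $w_h\le w_0$ on $\partial_p^-R$ is checked in three cases, the only nonobvious one being $u(a,t-h)-M_0 h\le w_0(a,t-h)-M_0 h\le w_0(a,t)$, which again uses $\Lip w_0(a,\cdot)\le M_0$. Maximality then yields $w_h\le v$, and in particular $u(s,t-h)-M_0 h\le v(s,t)$ for $t\ge T_0+h$. Taking the supremum over $u$ gives $v(s,t-h)-M_0 h\le v(s,t)$, which combined with the monotonicity of $v$ in $t$ produces \eqref{trump1bis}.

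\emph{Step 3 (space Lipschitz estimate and solution property).} With \eqref{trump1bis} in hand, any $C^1$ supertangent $\varphi$ to $v$ at $(s_0,t_0)\in R$ must satisfy $\varphi_t(s_0,t_0)\ge -M_0$ (by the time Lipschitz bound), so the subsolution inequality $\varphi_t+H(s_0,\varphi')\le 0$ forces $H(s_0,\varphi'(s_0,t_0))\le M_0$. By the very definition of $L_0$, equivalently by Proposition \ref{hoje} applied with $M=M_0$ once joint Lipschitz continuity of $v$ is noted, the section $v(\cdot,t)$ is a subsolution of $H(s,w')\le M_0$ in $(a,b)$ and hence is $L_0$-Lipschitz, which is \eqref{trump2bis}. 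Finally, $v$ is a subsolution as the supremum of a family of subsolutions now known to be equi-Lipschitz; the supersolution property follows from the usual Perron bump argument, a small bump at a point where the supersolution inequality failed producing a strictly larger admissible competitor, contradicting maximality. Hence $v$ is a Lipschitz solution with $v(\cdot,T_0)=w_0(\cdot,T_0)$.

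\emph{Main obstacle.} All the geometric content sits in Step 2: one must verify simultaneously that the gluing $w_h$ is continuous across $t=T_0+h$, that it is a subsolution across the interface via Proposition \ref{melaton}, and, most crucially, that it still satisfies the boundary inequality $w_h\le w_0$ on $\partial_p^-R$. It is precisely in this last verification that the constant $M_0$, encoding both the Hamiltonian bound on $w_0'(\cdot,T_0)$ and $\Lip w_0(a,\cdot)$, plays its decisive role.
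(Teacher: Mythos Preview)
Your Steps 1 and 2 are correct and in fact cleaner than the paper's version: the barrier $\phi$ and the time--shifted competitor $w_h$ do exactly what is needed, and the verification that $w_h\le w_0$ on $\partial_p^-R$ is where $M_0$ earns its keep. So far so good.

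The gap is in Step 3. You write that ``$v$ is a subsolution as the supremum of a family of subsolutions now known to be equi-Lipschitz'', but the family in \eqref{maxistar1} is \emph{not} equi-Lipschitz: it consists of all uniformly continuous subsolutions below $w_0$, with completely uncontrolled moduli. What Step 2 gives you is only that $v(s,\cdot)$ is $M_0$--Lipschitz for each \emph{fixed} $s$; it says nothing about continuity in $s$, hence nothing about joint continuity of $v$. Without joint continuity you cannot invoke the viscosity subsolution inequality for $v$ itself (only for its upper semicontinuous envelope $v^*$, and there is no reason $v^*(s,\cdot)$ should inherit the $M_0$--Lipschitz bound). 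Consequently the supertangent argument ``$\varphi_t\ge -M_0\Rightarrow H(s_0,\varphi')\le M_0$'' is not available, and the appeal to Proposition~\ref{hoje} ``once joint Lipschitz continuity of $v$ is noted'' is circular.

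The paper breaks this circularity by a bootstrapping step you have skipped: it fixes an arbitrary competitor $\widetilde u$ with modulus $\omega$, restricts to the subfamily $\widetilde{\mathcal S}$ of competitors with the \emph{common} modulus $\omega+M_0 r$, and uses Lemma~\ref{stephan} to conclude that the corresponding partial supremum $\widetilde v$ is uniformly continuous, hence a genuine subsolution. Your time--shift argument (Step 2) is then applied to $\widetilde v$, and coercivity gives that $\widetilde v$ is Lipschitz with the bounds \eqref{trump1bis}--\eqref{trump2bis}. Since $\widetilde u\le\widetilde v$ and $\widetilde u$ was arbitrary, one concludes $v=\sup\{\,u:u\in\mathcal S\,\}$ where $\mathcal S$ is the family of Lipschitz subsolutions satisfying \eqref{trump1bis}--\eqref{trump2bis}; this family \emph{is} equi-Lipschitz, and now Lemma~\ref{stephan} applies to $v$ itself. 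Your argument becomes correct once you insert this intermediate step.
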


In other terms, $M_0$ is the minimal constant such that the initial datum is subsolution of the corresponding stationary equation, and $L_0$ is a constant estimating from above  the Lipschitz constants of all  subsolutions to such stationary equation.  The proof is in Appendix \ref{A}.

\medskip

\begin{Remark}\label{fabro}
If the boundary datum $w_0$ is   assigned on $[a,b] \times \{T_0\}$ and we define $v$ as in \eqref{maxistar1}, we get, slightly adapting the  argument of Proposition \ref{maxistar},  that  $v$ is Lipschitz continuous solution to \eqref{HJloc} agreeing with $w_0$ in $[a,b] \times \{T_0\}$  and satisfying the estimates \eqref{trump3}, \eqref{trump4} with $M_0$, $L_0$ replaced by
\begin{eqnarray}
  M &=& \min \{m \mid H(s,w'_0(\cdot,T_0)) \leq m \;\hbox{ a.e. $s$}\} \label{trump1} \\
  L &=& \max\{|p| \mid H(s,p) \leq M \; \forall s\}. \label{trump2}
\end{eqnarray}
If the datum $w_0$ is assigned on $\partial^+_p R$ the constants to put in \eqref{trump3}, \eqref{trump4} are
\begin{eqnarray}
  M_1 &=& \min \{m \mid H(s,w'_0(\cdot,T_0)) \leq m \;\hbox{a.e. $s$}\} \vee \Lip w_0(b,\cdot)  \label{trump5}\\
  L_1 &=& \max\{|p| \mid H(s,p) \leq M_1 \, \forall s\} \label{trump6}.
\end{eqnarray}
 Finally, if the datum $w_0$ is given on the whole of parabolic boundary $\partial_p R$, we have the constants
\begin{eqnarray}
  M_2 &=& \min \{m \mid H(s,w'_0(\cdot,T_0)) \leq m \;\hbox{a.e. $s$}\} \vee \Lip  w_0(a,\cdot) \vee \Lip w_0(b,\cdot) \label{trump7}\\
  L_2 &=& \max\{|p| \mid H(s,p) \leq M_2 \, \forall s\} \label{trump8}.
\end{eqnarray}
\end{Remark}

\smallskip

We generalize Proposition \ref{maxistar} to absolutely continuous boundary data.

\smallskip

\begin{Proposition} \label{maxistaruni} Let $w_0$ be an uniformly  continuous    boundary  datum assigned on  $\partial_p^- R$. Then the  function
\begin{equation}\label{maxistar111}
  v(s,t) := \sup \{u(s,t) \mid u \;\hbox{un. cont. subsoln of \eqref{HJloc} in} \; R, u \leq w_0 \;\hbox{on $\partial_p^- R$}\}
\end{equation}
is an  uniformly  continuous  solution to \eqref{HJloc}  in $R$, and
coincides with $w_0$ in $[a,b] \times \{T_0\}$.
\end{Proposition}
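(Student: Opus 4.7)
The plan is to reduce the uniformly continuous case to the Lipschitz case of Proposition \ref{maxistar} via approximation. I approximate $w_0$ uniformly on $\partial_p^- R$ by a sequence $\{w_0^n\}$ of Lipschitz continuous boundary data, obtained by a standard Lipschitz regularization procedure carried out separately on the two pieces $[a,b] \times \{T_0\}$ and $\{a\} \times (T_0, T_1)$ and matched continuously at $(a, T_0)$. Writing $\varepsilon_n := |w_0^n - w_0|_\infty \to 0$, Proposition \ref{maxistar} supplies, for each $n$, a Lipschitz continuous solution $v_n$ of \eqref{HJloc} in $R$, maximal among uniformly continuous subsolutions below $w_0^n$ on $\partial_p^- R$ and equal to $w_0^n$ on $[a,b] \times \{T_0\}$.

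Since the Hamiltonian does not depend on $u$, subtracting a constant preserves sub-- and supersolutions. So $v_n - (\varepsilon_n + \varepsilon_m)$ is a uniformly continuous subsolution satisfying $v_n - (\varepsilon_n + \varepsilon_m) \leq w_0^n - (\varepsilon_n + \varepsilon_m) \leq w_0^m$ on $\partial_p^- R$; the maximality of $v_m$ then forces $v_n - (\varepsilon_n + \varepsilon_m) \leq v_m$, and by symmetry $|v_n - v_m|_\infty \leq \varepsilon_n + \varepsilon_m$. Hence $v_n \to \tilde v$ uniformly on $\ov R$; by standard stability of viscosity (sub/super)solutions, $\tilde v$ is a solution of \eqref{HJloc}, and $\tilde v = w_0$ on $[a,b] \times \{T_0\}$ since $v_n = w_0^n$ there. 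The same comparison trick, applied to an arbitrary uniformly continuous subsolution $u$ with $u \leq w_0$ on $\partial_p^- R$, gives $u - \varepsilon_n \leq v_n$ and hence $u \leq \tilde v$, so the supremum $v$ in \eqref{maxistar111} satisfies $v \leq \tilde v$. The reverse inequality $\tilde v \leq v$ will follow as soon as $\tilde v$ is admissible, i.e.\ uniformly continuous.

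The main obstacle is thus the uniform continuity of $\tilde v$, because the Lipschitz bounds $M_0^n, L_0^n$ coming from \eqref{trump3}--\eqref{trump4} generically blow up as $\Lip w_0^n \to \infty$, so a pointwise approximation does not by itself produce a common modulus of continuity. I would derive the modulus of $\tilde v$ directly from a modulus $\omega$ of $w_0$ together with the coercivity of $H$ via a careful choice of approximation. Taking $w_0^\lambda$ to be the $t$--partial sup--convolution (Appendix \ref{AA}) of $w_0$ in the $t$ variable on $\{a\} \times (T_0,T_1)$ and the analogous sup--convolution in $s$ on $[a,b] \times \{T_0\}$, with parameter $\lambda$, one obtains $\Lip w_0^\lambda \leq \lambda$ and $|w_0^\lambda - w_0|_\infty \leq \omega(1/\lambda)$. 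For $|s_1 - s_2| + |t_1 - t_2| \leq \delta$, the Cauchy estimate $|\tilde v - v_\lambda|_\infty \leq \omega(1/\lambda)$ combined with the Lipschitz bound for $v_\lambda$ yields
\[|\tilde v(s_1, t_1) - \tilde v(s_2, t_2)| \leq 2\,\omega(1/\lambda) + (M_0^\lambda \vee L_0^\lambda)\,\delta,\]
with $M_0^\lambda \vee L_0^\lambda$ controlled by $\lambda$ through the coercivity of $H$. Optimizing $\lambda = \lambda(\delta)$ produces a modulus of continuity for $\tilde v$ depending only on $\omega$ and $H$, valid on the whole of $\ov R$. Once established, this places $\tilde v$ in the admissible family, so $\tilde v \leq v$, and combined with the previous step completes the identification $\tilde v = v$ and finishes the proof.
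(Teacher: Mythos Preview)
Your approach --- Lipschitz approximation of $w_0$ and passage to the limit --- is exactly the paper's. However, the ``main obstacle'' you identify is illusory: once you have established that $v_n \to \tilde v$ \emph{uniformly} on $\ov R$, uniform continuity of $\tilde v$ is automatic, since a uniform limit of uniformly continuous functions is uniformly continuous (given $\eps>0$, fix $n$ with $|v_n-\tilde v|_\infty<\eps/3$ and use the modulus of $v_n$). No common modulus for the $v_n$ is needed, and the optimization over $\lambda$ is superfluous. You should also check that $\tilde v \leq w_0$ on $\partial_p^- R$ (not just that $\tilde v$ is uniformly continuous) to place $\tilde v$ in the admissible family; this follows from $v_n \leq w_0^n \leq w_0 + \eps_n$ there.

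The paper's proof is a slight streamlining of yours: rather than forming a Cauchy limit $\tilde v$ and then identifying it with the supremum $v$, it applies the same monotonicity-plus-translation trick directly to $v$. From $w_0^n-\eps \leq w_0 \leq w_0^n+\eps$ on $\partial_p^- R$ one gets $v_n-\eps \leq v \leq v_n+\eps$ in $R$, so $v_n \to v$ uniformly. This single sandwich simultaneously gives the uniform continuity of $v$, its solution property (by stability), and the boundary value on $[a,b]\times\{T_0\}$, with no separate $\tilde v$ to identify.
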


\smallskip

We preliminarily need introducing a regularization device. Given an uniformly continuous function $u$  defined in a closed set  $C \subset \ov Q$, we define, for $n \in \N$,  the following approximations from above and below
\begin{eqnarray}
  u^{[n]}(s,t))&=& \sup \{u(z,r) - n\, ( |z-s| + |t-r|) \mid (z,r) \in C\}  \label{hopfsup}\\
 u_{[n]}(s,t)&= & \inf \{u(z,r) + n\, ( |z-s| + |t-r|) \mid (z,r) \in C\} \label{hopfinf}
\end{eqnarray}
The following properties hold:

\begin{Lemma} \label{hopf} \hfill
\begin{itemize}
   \item [{\bf (i)}] For $n$ sufficiently large, the functions $u^{[n]}$ and $u_{[n]}$ are Lipschitz continuous in $C$ with Lipschitz constant $n$, and $u^{[n]} \geq u \geq u_{[n]}$;
   \item [{\bf (ii)}] $u^{[n]}$ and  $u_{[n]}$  uniformly converge to $u$ in $C$ as $n$ goes to infinity, with $|u^{[n]}-u|_\infty$,  $|u_{[n]}-u|_\infty$  only depending on the continuity modulus of $u$.
\end{itemize}
\end{Lemma}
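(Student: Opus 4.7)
I would treat $u^{[n]}$ first; the statement for $u_{[n]}$ then follows by applying the $u^{[n]}$ result to $-u$. Fix a concave modulus of continuity $\omega$ of $u$ on $C$ (any uniformly continuous function admits one, obtained as the concave envelope of an arbitrary modulus). Concavity together with $\omega(0)=0$ makes $\omega(r)/r$ nonincreasing, so $\omega(r)\leq A+Br$ for some $A,B\geq 0$; this is the only quantitative information I would exploit.

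For \textbf{(i)}, the inequality $u^{[n]}\geq u$ is immediate by taking $(z,r)=(s,t)$ in \eqref{hopfsup}. For $n>B$, the expression under the supremum in \eqref{hopfsup} satisfies
\[ u(z,r) - n\,(|z-s|+|t-r|) \leq u(s,t) + A - (n-B)\,(|z-s|+|t-r|), \]
so it is bounded above and tends to $-\infty$ as $|(z,r)|\to\infty$ in $C$; hence the supremum is finite and attained. The Lipschitz bound with constant $n$ in the $\ell^1$ norm on $(s,t)$ is then automatic: for each fixed $(z,r)$, the map $(s,t)\mapsto u(z,r) - n(|z-s|+|t-r|)$ is $n$-Lipschitz, and a pointwise supremum of $n$-Lipschitz functions is $n$-Lipschitz.

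For \textbf{(ii)}, let $(z_n,r_n)\in C$ realize $u^{[n]}(s,t)$. Comparison with the competitor $(s,t)$ gives $u(z_n,r_n) - n(|z_n-s|+|t-r_n|) \geq u(s,t)$, while the modulus bound reads $u(z_n,r_n) - u(s,t) \leq \omega(|z_n-s|+|t-r_n|)$; combining these yields
\[ n\,(|z_n-s|+|t-r_n|) \leq \omega(|z_n-s|+|t-r_n|). \]
Set $\delta_n := \sup\{\delta\geq 0 : \omega(\delta)\geq n\delta\}$. Since $\omega(\delta)/\delta$ is nonincreasing, $\delta_n\to 0$ as $n\to\infty$ (otherwise some subsequence would satisfy $\delta_n\geq\delta_0>0$, whence $n\leq \omega(\delta_n)/\delta_n\leq \omega(\delta_0)/\delta_0$, contradicting $n\to\infty$), and $\delta_n$ depends only on $\omega$. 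Therefore $|z_n-s|+|t-r_n|\leq \delta_n$, and
\[ 0\leq u^{[n]}(s,t) - u(s,t) \leq u(z_n,r_n) - u(s,t) \leq \omega(\delta_n), \]
which gives uniform convergence of $u^{[n]}$ to $u$ on $C$ with an error controlled solely by $\omega$.

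\textbf{Main obstacle.} There is no substantive difficulty, the statement being classical material on sup-- and inf--convolutions; the only care needed is the attainment of the supremum when $C$ is unbounded in $t$, which is handled once $n>B$ as above, and the verification that $\delta_n\to 0$ at a rate depending only on $\omega$.
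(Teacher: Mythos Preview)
Your argument is correct and follows essentially the same route as the paper's: both exploit that a uniformly continuous function has a concave (equivalently, an infimum-of-affine) modulus $\omega$, use the resulting sublinear growth to guarantee attainment of the sup for $n$ large, and then bound the distance from $(s,t)$ to its maximizer via the inequality $n\,d \leq \omega(d)$ to obtain uniform convergence with rate depending only on $\omega$. The only cosmetic differences are that the paper writes $\omega(r)=\inf_k\{a_k+\ell_k r\}$ and extracts an explicit $O(1/n)$ bound from one affine piece, and that it proves the $n$-Lipschitz estimate by comparing maximizers rather than by invoking ``a sup of $n$-Lipschitz functions is $n$-Lipschitz''.
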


The proof is in Appendix \ref{A}.

\smallskip

\begin{proof}[{\bf Proof of Proposition \ref{maxistaruni}}]  According to Lemma \ref{hopf}, we find  a sequence of Lipschitz continuous functions $w^0_n$ uniformly converging to $w_0$ in $\partial_p^-R$. We denote by $v_n$ the maximal subsolutions of \eqref{HJloc} not exceeding $w^0_n$ in $\partial_p R^-$. According to Proposition \ref{maxistar}, the $v_n$'s are Lipschitz continuous subsolutions of \eqref{HJloc}, and  any $v_n$ agrees with $w^0_n$ on $\partial_p^- R$. Given $\eps >0$, we have
\[ w^0_n - \eps \leq w_0 \leq w^0_n + \eps \Txt{for $n$ large, in $\partial_p^- R$}\]
which implies by the very definition of maximal subsolution
\[v_n - \eps \leq v \leq v_n + \eps \Txt{for $n$ large, in $R$}\]
so that $v_n$ uniformly converges to $v$ in $R$. This gives the assertion.
\end{proof}

\smallskip
We also derive from the argument of the above proposition:

\begin{Corollary}\label{cormaxistaruni} Let $w_0$ be an uniformly continuous function in $\partial_p^-R$, and $w^0_n$ a sequence of uniformly continuous functions uniformly approximating it in $\partial_p^-R$. Then the maximal subsolutions of \eqref{HJloc} among those less than or equal to $w^0_n$ in $\partial_p^-R$ uniformly converge  to  the maximal subsolutions less than or equal to $w_0$ in $\partial_p^-R$.

\end{Corollary}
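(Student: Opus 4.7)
The plan is to run exactly the approximation argument that already appears at the end of the proof of Proposition \ref{maxistaruni}, but with the given sequence $w^0_n$ replacing the Lipschitz smoothings produced by Lemma \ref{hopf}. Write $v_n$ for the maximal subsolution of \eqref{HJloc} in $R$ among those not exceeding $w^0_n$ on $\partial_p^-R$, and $v$ for the maximal subsolution not exceeding $w_0$ on $\partial_p^-R$; both exist and are uniformly continuous by Proposition \ref{maxistaruni}.

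First I would fix $\eps>0$ and use the uniform convergence hypothesis to pick $N$ so that $|w^0_n - w_0|_\infty \le \eps$ on $\partial_p^-R$ for every $n \ge N$. This yields the two-sided sandwich
\[ w^0_n - \eps \;\le\; w_0 \;\le\; w^0_n + \eps \qquad \text{on } \partial_p^-R.\]

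Next I would exploit the fact that, since $H$ depends only on $(s,p)$, the equation \eqref{HJloc} is invariant under addition of a constant to $u$. Consequently $v-\eps$ is an uniformly continuous subsolution of \eqref{HJloc} in $R$ that satisfies $v-\eps \le w_0 - \eps \le w^0_n$ on $\partial_p^-R$, so by the maximality characterization of $v_n$ one gets $v-\eps \le v_n$ in $R$. Symmetrically, $v_n-\eps$ is an uniformly continuous subsolution with $v_n-\eps \le w^0_n - \eps \le w_0$ on $\partial_p^-R$, so $v_n - \eps \le v$ in $R$. Combining these two inequalities gives $|v_n - v|_\infty \le \eps$ in $R$ for every $n \ge N$, which is the claimed uniform convergence.

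There is no real obstacle here: the mechanism is just that adding a constant commutes with \eqref{HJloc} and preserves boundary orderings, so maximality automatically transfers the boundary modulus to an interior modulus. The only thing worth noting is that the translates $v-\eps$ and $v_n-\eps$ genuinely belong to the class over which the sup in \eqref{maxistar111} is taken, which is immediate from Proposition \ref{maxistaruni} since $v$ and $v_n$ are themselves uniformly continuous.
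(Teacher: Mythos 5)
Your proof is correct and follows essentially the same route as the paper: the paper derives this corollary by rerunning the $\eps$--sandwich argument from the proof of Proposition \ref{maxistaruni}, i.e.\ $w^0_n-\eps\le w_0\le w^0_n+\eps$ on $\partial_p^-R$ combined with the maximality and the invariance of \eqref{HJloc} under addition of constants, exactly as you do. Your explicit verification that $v-\eps$ and $v_n-\eps$ lie in the respective admissible classes is a welcome (if minor) elaboration of the paper's ``by the very definition of maximal subsolution'' step.
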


\medskip
\subsection{Admissible traces}
In this section we investigate the possibility of modifying an admissible boundary datum on part of the parabolic boundary, still getting an admissible datum. The first statement in this respect is:

\begin{Proposition}\label{may} Let $u_0$ be a Lipschitz continuous  admissible trace on $\partial^-_p R$, and assume   $w_0$ to be a Lipschitz continuous function  defined in  $\partial^-_p R$ with
\[ u_0 \leq w_0 \;\;\hbox{in $[a,b] \times \{T_0\}$ and }  \;\; u_0  = w_0  \;\;\hbox{in $\{a\} \times [T_0,T_1)$}\]
then $w_0$ is admissible on $\partial^-_p R$ as well.
\end{Proposition}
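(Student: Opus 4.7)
The plan is to use the maximal subsolution associated with the datum $w_0$ furnished by Proposition \ref{maxistar}, and then verify that it actually attains $w_0$ everywhere on $\partial_p^- R$ (and not only on $[a,b]\times\{T_0\}$) by sandwiching it below with a subsolution coming from the admissibility of $u_0$.

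More precisely, the admissibility of $u_0$ provides a uniformly continuous subsolution $\widetilde u$ of \eqref{HJloc} in $R$ with $\widetilde u \equiv u_0$ on $\partial_p^- R$. The assumptions on $w_0$ give $\widetilde u = u_0 \leq w_0$ on $[a,b]\times\{T_0\}$ and $\widetilde u = u_0 = w_0$ on $\{a\}\times[T_0,T_1)$, so $\widetilde u \leq w_0$ on the whole of $\partial_p^- R$. In particular the family defining
\[ v(s,t) := \sup \{u(s,t) \mid u \;\hbox{un.\ cont.\ subsoln of \eqref{HJloc} in}\; R,\; u \leq w_0 \;\hbox{on}\; \partial_p^- R\}\]
is non-empty, and since $w_0$ is Lipschitz continuous, Proposition \ref{maxistar} applies: $v$ is a Lipschitz continuous solution of \eqref{HJloc} in $R$ with $v \equiv w_0$ on $[a,b]\times\{T_0\}$.

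The remaining (and only nontrivial) point is to show that $v = w_0$ also on $\{a\}\times[T_0,T_1)$. Since $\widetilde u$ is a competitor in the supremum defining $v$, we have $\widetilde u \leq v$ in $R$, and by uniform continuity this inequality extends up to $\partial_p^- R$. On $\{a\}\times[T_0,T_1)$ this yields $w_0 = u_0 = \widetilde u \leq v$; combined with the bound $v \leq w_0$ on $\partial_p^- R$ inherited from each element of the family, we conclude $v \equiv w_0$ on $\{a\}\times[T_0,T_1)$. Thus $v$ is a Lipschitz continuous (in particular uniformly continuous) subsolution of \eqref{HJloc} in $R$ agreeing with $w_0$ on all of $\partial_p^- R$, which is exactly the admissibility of $w_0$.

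The main obstacle, if one wants to call it that, is the agreement of $v$ with $w_0$ on the lateral part $\{a\}\times[T_0,T_1)$, since Proposition \ref{maxistar} only guarantees this on the bottom face; the argument above circumvents it simply by using $\widetilde u$ as a lower barrier that equals $w_0$ on the lateral face.
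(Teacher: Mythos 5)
Your proof is correct in substance but takes a genuinely different route from the paper's. The paper argues by explicit construction: choosing $M$ larger than the Lipschitz constants of $u_0$ and $w_0$ and such that $H(s,w_0'(s,T_0))<M$ a.e., the function $\ov w(s,t)=w_0(s,T_0)-M\,(t-T_0)$ is a subsolution of \eqref{HJloc} equal to $w_0$ on $[a,b]\times\{T_0\}$ and below $u_0=w_0$ on $\{a\}\times[T_0,T_1)$; the pointwise maximum of $\ov w$ with a subsolution realizing $u_0$ is again a subsolution and has trace exactly $w_0$ on all of $\partial_p^-R$. You instead invoke the maximal-subsolution machinery of Proposition \ref{maxistar} and use the subsolution $\widetilde u$ realizing $u_0$ as a lower barrier on the lateral face. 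Both reach the conclusion; the paper's argument is lighter, needing only the elementary fact that the maximum of two subsolutions is a subsolution, whereas yours leans on the existence and regularity of the maximal subsolution.

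One step in your argument deserves more care: the inequality $v\le w_0$ on $\{a\}\times[T_0,T_1)$ is not quite ``inherited from each element of the family'' as stated. The value of $v$ at a lateral boundary point is the continuous extension of a pointwise supremum taken over the open rectangle $R$, and without equicontinuity of the competitors this extension could a priori exceed the supremum of their boundary values. The inequality does hold here, because the proof of Proposition \ref{maxistar} reduces the defining family to an equi-Lipschitz one with constants $M_0$, $L_0$, whence $v(s,t)\le w_0(a,t)+L_0\,|s-a|$ and one may let $s\to a$; but the statement of Proposition \ref{maxistar} alone only guarantees attainment of the datum on the bottom face $[a,b]\times\{T_0\}$, so this point should be spelled out rather than taken for granted.
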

\begin{proof}  We take $M$ with
\begin{eqnarray*}
  \Lip w_0, \, \Lip u_0 &<& M  \Txt{in $\partial^-_p R$}\\
 H(s, w_0'(\cdot,0))&<& M  \Txt{a.e. in $(a,b)$}
\end{eqnarray*}
Therefore   $\ov w(s,t): =  w_0(s,T_0) - M \, (t -T_0)$ is a  subsolution  to \eqref{HJloc}  in $R$ such that
\begin{eqnarray*}
  \ov w & \leq & u_0 =w_0 \quad\hbox{$\{a\} \times (T_0,T_1)$} \\
  \ov w &=& w_0 \geq u_0  \txt{in $[a,b] \times \{T_0\}$}
\end{eqnarray*}
We denote by  $\ov u$ a subsolution with trace $u_0$ on  $\partial^-_p R$, then the function
\[(s,t) \mapsto \max \{ \ov w(s,t), \, \ov u(s,t) \}\]
is a  subsolution agreeing with $w_0$ on $\partial^-_p R$, as was claimed.

\end{proof}

\smallskip

The following result  somehow complements Proposition \ref{may}. We show that we can  fix an admissible boundary datum on $\partial_p^-R$ (resp. $\partial_p^+R$) and make it decrease on $\{a\} \times [T_0,T_1)$ (resp. $\{b\} \times [T_0,T_1)$) still obtaining an admissible datum. We need for that  an additional assumption on the time derivative of the new datum on $\{a\} \times [T_0,T_1)$ (resp. (resp. $\{b\} \times [T_0,T_1)$)).\\

\smallskip
\begin{Proposition}\label{lemfaberuno}  Let $u_0$ be an admissible Lipschitz continuous  boundary
 datum for \eqref{HJloc} in $\partial^-_p R$ (resp. $\partial^+_p R$) ,
and $v_0$ a Lipschitz continuous function defined in  $\partial^-_p R$ (resp. $\partial^+_p R$) with $u_0 = v_0$ on
$[a,b] \times \{T_0\}$ and $v_0 \leq u_0$ on $\{a\} \times [T_0,T_1)$ (resp. on $\{b\} \times [T_0,T_1)$). We  further  assume   that
\begin{equation}\label{catania55}
 \frac d{dt} v_0 (a,t) \leq  - \max_{s \in [0,1]} \, \min_{p \in \R} H(s,p) \txt{a.e. in $(T_0,T_1)$.}
\end{equation}
\[ \left ( \frac d{dt} v_0 (b,t) \leq  - \max_{s \in [0,1]} \, \min_{p \in \R} H(s,p) \txt{a.e. in $(T_0,T_1)$.} \right )\]
 Then $v_0$ is admissible for \eqref{HJloc} on  $\partial^-_p R$ (resp. $\partial^+_p R$).
\end{Proposition}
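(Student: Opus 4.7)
The strategy is to exhibit a Lipschitz subsolution $\tilde u$ of \eqref{HJloc} in $R$ whose trace on $\partial^-_p R$ coincides exactly with $v_0$, by taking the pointwise maximum of two Lipschitz subsolutions: a lateral barrier matching $v_0(a,\cdot)$, and a time-damped version of the admissible subsolution $\bar u$ with trace $u_0$.

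Let $m := \max_{s\in[0,1]} \min_{p\in\R} H(s,p)$, strictly positive by \eqref{assu}. Using coercivity, continuity and convexity of $H$, the multifunction $s \mapsto \{p\in\R : H(s,p) \leq m\}$ has non-empty, compact and convex values, and admits a bounded Borel selection $p^\star(\cdot)$ by standard measurable selection. Setting $\psi(s) := \int_a^s p^\star(\tau)\,d\tau$ and
\[\bar w(s,t) := v_0(a,t) + \psi(s),\]
$\bar w$ is Lipschitz with $\bar w(a,t) = v_0(a,t)$, and a.e.\ differentiation combined with \eqref{catania55} gives
\[\bar w_t + H(s,\bar w') = \frac{d}{dt}v_0(a,t) + H(s,\psi'(s)) \leq -m + m = 0,\]
so $\bar w$ is a viscosity subsolution of \eqref{HJloc}. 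The selection $p^\star$ is chosen so that, in addition, $\psi(s) \leq u_0(s,T_0) - u_0(a,T_0)$ for every $s\in[a,b]$, which forces $\bar w(s,T_0) = u_0(a,T_0) + \psi(s) \leq u_0(s,T_0) = v_0(s,T_0)$.

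Let $\bar u$ be a Lipschitz subsolution with trace $u_0$ on $\partial^-_p R$, and set $\lambda(t) := \sup_{r\in[T_0,t]}[u_0(a,r) - v_0(a,r)]$, non-decreasing with $\lambda(T_0)=0$. Since $\lambda'\geq 0$ a.e. and $\lambda$ does not depend on $s$, the function $\bar u - \lambda$ is itself a Lipschitz subsolution of \eqref{HJloc} (as $(\bar u-\lambda)_t + H(s,\bar u') \leq -\lambda' \leq 0$). Define
\[\tilde u(s,t) := \max\{\bar w(s,t),\, \bar u(s,t) - \lambda(t)\},\]
which is a Lipschitz subsolution as the maximum of two subsolutions. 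On $\{a\}\times[T_0,T_1)$ we have $\bar u(a,t)-\lambda(t) \leq v_0(a,t) = \bar w(a,t)$, so $\tilde u(a,t) = v_0(a,t)$; on $[a,b]\times\{T_0\}$, $\bar w(s,T_0) \leq u_0(s,T_0) = \bar u(s,T_0) - \lambda(T_0)$, so $\tilde u(s,T_0) = u_0(s,T_0) = v_0(s,T_0)$. Hence $\tilde u$ has trace $v_0$ on $\partial^-_p R$, proving admissibility. The case of $\partial^+_p R$ is entirely analogous, by reversing orientation via \eqref{ovgamma}.

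The central technical point---and the main obstacle---is the construction of the Lipschitz function $\psi$ satisfying simultaneously $\psi(a)=0$, $H(s,\psi'(s)) \leq m$ a.e., and the pointwise upper bound $\psi(s) \leq u_0(s,T_0) - u_0(a,T_0)$ for all $s\in[a,b]$. The first two conditions amount to a measurable selection in the compact convex-valued multifunction $s\mapsto[p^-(s),p^+(s)] := \{p : H(s,p)\leq m\}$; the third forces the selection $p^\star$ to be no larger than $u_0'(\cdot,T_0)$ on average. Its existence is obtained by a truncation-and-projection argument exploiting admissibility of $u_0$ (via Proposition \ref{hoje}, bounding $H(s,u_0'(s,T_0))$ in terms of the time-Lipschitz constant of $\bar u$) combined with the extra rate of decay of $v_0(a,\cdot)$ guaranteed by \eqref{catania55}.
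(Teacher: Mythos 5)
Your overall architecture is close to the paper's: both build the lateral barrier $\bar w(s,t)=v_0(a,t)+\psi(s)$ with $H(s,\psi')\le m$ a.e.\ (the paper writes it as $w_0(s)+v_0(a,t)$), and both combine it with the subsolution $\bar u$ carrying the datum $u_0$. But there is a genuine gap exactly at the point you yourself flag as ``the central technical point'': a Lipschitz $\psi$ with $\psi(a)=0$, $H(s,\psi'(s))\le m$ a.e.\ \emph{and} $\psi(s)\le u_0(s,T_0)-u_0(a,T_0)$ for all $s$ does not exist in general, and no truncation--projection argument can produce it. The constraint $H(s,\psi')\le m$ confines $\psi'$ to the sublevel set at the level $m=\max_s\min_p H(s,p)$, which is the \emph{smallest} level at which all sections are nonempty, and these sections can be singletons. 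Take $H(s,p)=p^2+1$, so $m=1$ and $\{p: H(s,p)\le 1\}=\{0\}$, forcing $\psi\equiv 0$; the datum $u_0(s,T_0)=-s$ is admissible (carried by the Lipschitz subsolution $-s-2(t-T_0)$), yet $\psi(s)\le u_0(s,T_0)-u_0(a,T_0)=-(s-a)$ fails for every $s>a$. Admissibility of $u_0$, via Proposition \ref{hoje}, only bounds $H(s,u_0'(\cdot,T_0))$ by the time--Lipschitz constant $M$ of $\bar u$, which can be strictly larger than $m$, so $u_0(\cdot,T_0)$ may descend faster than any curve subject to $H(s,\psi')\le m$ --- precisely the situation above. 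Since admissibility requires a subsolution whose trace \emph{equals} $v_0$ on $\partial_p^-R$, your $\tilde u=\max\{\bar w,\,\bar u-\lambda\}$ then has the wrong bottom trace (namely $\bar w(s,T_0)>u_0(s,T_0)$ somewhere), and the argument breaks.

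The paper circumvents this by not insisting that the barrier lie below $u_0$ at $t=T_0$: it takes the max, accepts that the bottom datum is raised to $\max\{w,u_0\}$, invokes Proposition \ref{may} to conclude that this raised datum (with side value $v_0(a,\cdot)$) is still admissible, and then takes the \emph{minimum} of the resulting subsolution with $\bar u$ to push the bottom trace back down to $u_0=v_0$. The minimum of two Lipschitz subsolutions is again a subsolution precisely because $H$ is convex (Lemma \ref{catania1}); this is the ingredient your proof avoids and the one you would need to repair it. Your $\lambda$--damping of $\bar u$ to control the side trace is correct and is a mildly different device from the paper's (which handles the side through the same min), but it does not address the bottom.
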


We need a preliminary elementary  lemma, we provide the proof in the Appendix  \ref{A} for reader's convenience.

\smallskip

\begin{Lemma}\label{catania1}  The minimum  of two Lipschitz continuous subsolutions to \eqref{HJloc}  is a subsolution.
\end{Lemma}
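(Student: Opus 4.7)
The plan is to exploit the equivalence, valid in the Lipschitz class when the Hamiltonian is convex in the momentum variable, between being a viscosity subsolution of \eqref{HJloc} and satisfying $u_t + H(s,u') \leq 0$ almost everywhere. Given two Lipschitz continuous viscosity subsolutions $u_1, u_2$ of \eqref{HJloc}, I set $v = u_1 \wedge u_2$, which is trivially Lipschitz. By \textbf{(H2)} and the standard equivalence just recalled, each $u_i$ satisfies its PDE inequality at almost every point of differentiability; the aim is to show the same property for $v$, whence $v$ is again a viscosity subsolution by the converse half of the equivalence (which also requires convexity).

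I would work on the full-measure set $E$ on which $u_1$, $u_2$ and $v$ are \emph{simultaneously} (totally) differentiable, which is of full measure in $R$ by Rademacher's theorem, and on which in addition both $u_1$ and $u_2$ fulfill their a.e.\ subsolution inequalities. At any $(s,t) \in E$, if $u_1(s,t) \neq u_2(s,t)$, continuity produces a neighborhood on which $v$ coincides identically with the smaller of the two, so that $Dv(s,t)$ equals the gradient of the corresponding $u_i$ and the PDE inequality for $v$ is inherited from that for $u_i$. If instead $u_1(s,t) = u_2(s,t)$, then the function $u_1 - v \geq 0$ attains a minimum at $(s,t)$, where both $u_1$ and $v$ are differentiable by the choice of $E$; hence $Du_1(s,t) = Dv(s,t)$ and the PDE for $v$ again follows from the one for $u_1$. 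In both cases $v$ satisfies $v_t + H(s,v') \leq 0$ a.e., and an appeal to the converse equivalence completes the proof.

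The main point that demands attention is the equality case $u_1 = u_2$: the measure-theoretic bookkeeping must ensure simultaneous (not merely pairwise) differentiability of $v$, $u_1$, $u_2$ on the exceptional set, which is why I would set up $E$ as the intersection of the three full-measure differentiability sets from the outset. Beyond this, the argument is routine, and no doubling or test-function construction is needed.
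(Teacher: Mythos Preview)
Your proof is correct and follows essentially the same strategy as the paper: reduce to the a.e.\ subsolution property via the convexity-based equivalence, and at a differentiability point of the minimum relate its gradient to that of the function attaining the minimum there. The only cosmetic difference is that the paper avoids requiring simultaneous differentiability of all three functions by observing that $Dw(s_0,t_0) \in D^- u(s_0,t_0) \subset \partial u(s_0,t_0)$ and invoking that the subsolution inequality holds on the full Clarke gradient, whereas you intersect the three differentiability sets up front and argue with classical derivatives; both routes are equally valid.
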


\smallskip

\begin{proof} [{\bf Proof of Proposition \ref{lemfaberuno}:} ]  We give the proof for functions defined on $\partial_p^- R$. The argument for $\partial_p^+ R$ is the same.  We set
\[c= - \max_{s \in [0,1]} \, \min_{p \in \R} H(s,p).\]
  We denote  by $w_0(s)$ a function satisfying
\[H(s,w_0') = -c  \Txt{in the viscosity sense in $(a,b)$}\]
and $u_0(a)= 0$.  Due to \eqref{catania55}, the function
\[w(s,t):=w_0(s) + v_0(a,t)\]
is a Lipschitz continuous subsolution to \eqref{HJloc} in $R$.
Applying   Proposition \ref{may} with admissible trace $w$, we find that the trace
\begin{eqnarray}
  \max \{w, \, u_0\} =  \max \{w, \, v_0\} && \txt{on $[a,b] \times \{T_0\}$,} \label{lemfaberuno1}\\
 v_0(a,t) && \txt{on $\{a\} \times [T_0,T_1)$} \label{lemfaberuno11}
\end{eqnarray}
is admissible for \eqref{HJloc} on $\partial_p^-R$.    Finally,  by exploiting  Lemma \ref{catania1} we see that $v_0$, being  the  minimum  of $u_0$ and the function in \eqref{lemfaberuno1}, \eqref{lemfaberuno11}, is admissible.
\end{proof}

\smallskip

We say  that a continuous function $u$ is {\em strict} subsolution in $R$ of \eqref{HJloc}   if
\begin{equation}\label{stretto}
u_t + H(s,u') \leq - \de \qquad\hbox{ in $R$, in the viscosity sense }
\end{equation}
for some $\de >0$.
\smallskip

\begin{Lemma}\label{lemfabertre} Let $u$ be an uniformly  continuous strict subsolution to \eqref{HJloc} in $R$.
The maximal  subsolution  agreeing with  $u$  on $\partial^-_p R$ is
strictly greater than $u$ in $R \cup \big ( \{b\} \times (T_0, T_1 ) \big ) $.
\end{Lemma}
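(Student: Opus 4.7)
The plan is to exhibit a uniformly continuous subsolution $w$ of \eqref{HJloc} on $R$ satisfying $w = u$ on $\partial_p^-R$ and $w > u$ on $R \cup \{b\}\times(T_0,T_1)$. Once such $w$ is produced, the characterization of $v$ as the maximal uniformly continuous subsolution not exceeding $u$ on $\partial_p^-R$ (Proposition \ref{maxistaruni}), together with the continuous extension to $\ov R$, forces $v \geq w > u$ on the target set, which is the desired conclusion.

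I would take $w_\epsilon := u + \epsilon\phi$ for the explicit bounded smooth function
\[
\phi(s,t) := (s - a)\bigl(1 - e^{-(t - T_0)}\bigr),
\]
which vanishes precisely on $\partial_p^-R$, is strictly positive on $R \cup \{b\}\times(T_0,T_1)$, and has uniformly bounded $\phi_t, \phi'$ on $\ov R$. Clearly $w_\epsilon$ is uniformly continuous, coincides with $u$ on $\partial_p^-R$, and strictly exceeds $u$ on $\ov R \setminus \partial_p^-R$; only the viscosity subsolution property for $w_\epsilon$ in $R$ remains to be verified.

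For this check, given a $C^1$ supertangent $\psi$ to $w_\epsilon$ at $(s_0,t_0) \in R$, the shifted function $\tilde\psi := \psi - \epsilon\phi$ is a $C^1$ supertangent to $u$, hence by the strict subsolution property $\tilde\psi_t + H(s_0,\tilde\psi') \le -\delta$ at $(s_0,t_0)$. Substituting $\psi_t = \tilde\psi_t + \epsilon\phi_t$, $\psi' = \tilde\psi' + \epsilon\phi'$, and estimating the variation of $H$ in $p$ via {\bf (H4)}, I obtain
\[
\psi_t(s_0,t_0) + H\bigl(s_0,\psi'(s_0,t_0)\bigr) \le -\delta + \epsilon\bigl(\|\phi_t\|_\infty + C_M\|\phi'\|_\infty\bigr),
\]
where $C_M$ is a Lipschitz constant of $H(s_0,\cdot)$ on the range of momenta in play. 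Taking $\epsilon < \delta/(\|\phi_t\|_\infty + C_M\|\phi'\|_\infty)$ makes the right-hand side negative and yields the subsolution inequality.

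The main obstacle is controlling $C_M$ uniformly over all relevant supertangents: {\bf (H4)} provides a Lipschitz constant only on bounded ranges of $|p|$, and for Lipschitz $u$ every viscosity supertangent has $|\psi'| \le \Lip u + \epsilon\|\phi'\|_\infty$, whereas for merely uniformly continuous $u$ supertangents can have arbitrarily large gradient. I would therefore first prove the statement assuming $u$ Lipschitz, where the displayed argument closes with a uniform $\epsilon$, and then handle general uniformly continuous $u$ by regularizing via a quadratic sup--convolution $u^\lambda$, which produces Lipschitz strict subsolutions of \eqref{HJloc} (with a slightly reduced strictness gap) converging uniformly to $u$. Applying the perturbation construction to $u^\lambda$, shifted downward by the uniform boundary error $\sup_{\partial_p^-R}(u^\lambda - u)$, yields a uniformly continuous subsolution competitor $\le u$ on $\partial_p^-R$ that strictly exceeds $u$ at any prescribed target point for $\lambda$ chosen small enough; the stability of the maximal subsolution under uniform approximation of its boundary datum (Corollary \ref{cormaxistaruni}) then delivers the pointwise strict inequality $v > u$ on $R \cup \{b\}\times(T_0,T_1)$.
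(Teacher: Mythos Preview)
Your construction for Lipschitz $u$ is correct and is a different route from the paper's: you build an explicit global competitor $u+\epsilon\phi$, whereas the paper argues by contradiction via a Perron--Ishii bump at a contact point. The gap is in your reduction from uniformly continuous to Lipschitz $u$. After sup--convolving, the admissible perturbation size $\epsilon(\lambda)$ is of order $\delta/C_M$ with $M\sim\Lip u^\lambda\to\infty$, while the downward shift you must apply is $\eta_\lambda\sim\|u^\lambda-u\|_\infty$. For your shifted competitor $u^\lambda+\epsilon(\lambda)\phi-\eta_\lambda$ to strictly exceed $u$ at a fixed target point $(s_0,t_0)$ you need essentially $\epsilon(\lambda)\,\phi(s_0,t_0)>\eta_\lambda-(u^\lambda-u)(s_0,t_0)$; when $u$ is rough near $\partial_p^-R$ but smoother near $(s_0,t_0)$ the right side is of order $\eta_\lambda$. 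For a modulus $\om(r)\sim r^\alpha$ one computes $\epsilon(\lambda)/\eta_\lambda\sim\lambda^{(1-2\alpha)/(2-\alpha)}\to 0$ whenever $\alpha<1/2$, so the inequality fails. Your final appeal to Corollary~\ref{cormaxistaruni} does not rescue this: that stability only yields uniform convergence of maximal subsolutions, hence in the limit merely $v\ge u$, not the strict inequality.

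The paper sidesteps this quantitative balancing entirely. It assumes without loss that $u$ is itself the maximal \emph{strict} subsolution (for the Hamiltonian $H+\de$) with its own trace on $\partial_p^-R$, approximates that trace by Lipschitz data $u_n$, and forms the corresponding Lipschitz maximal strict subsolutions $\bar u_n\to u$ uniformly (Corollary~\ref{cormaxistaruni} applied to $H+\de$). If $v=u$ at some point of $\ov R\setminus\partial_p^-R$, then for $n$ large $\bar u_n$ is subtangent to $v$ at a nearby point $(s_n,t_n)\in\ov R\setminus\partial_p^-R$; since $\bar u_n$ is a Lipschitz \emph{strict} subsolution, a local Perron--Ishii modification there yields a subsolution of \eqref{HJloc} strictly above $v$ near $(s_n,t_n)$ with the same trace on $\partial_p^-R$, contradicting maximality of $v$. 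The Lipschitz regularity of $\bar u_n$ is used only qualitatively, to enable the bump, so no competition between a perturbation size and an approximation error ever arises.
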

\begin{proof} We can assume that $u$ is the maximal uniformly continuous subsolution to \eqref{stretto} among those with the same trace on $\partial^-_p R$. We denote by $u_n$ a sequence of Lipschitz continuous functions uniformly approximating $u$ on $\partial_p^- R$ and by $\ov u_n$ the maximal subsolution of \eqref{stretto} less that or equal to $u_n$ on $\partial_p^- R$. We know by Corollary \ref{cormaxistaruni} that the $\bar u_n$'s  uniformly converge to $u$ on $\ov R$. We finally denote by $v$ the maximal subsolution of \eqref{HJloc} agreeing with $u$ on $\partial_p^-R$.

It is clear that $u \leq v$ on $\ov R$. if $u=v$ at a point  $(s_0,T_0) \in \ov R \setminus \partial_p^-R$, then, taking into account that $\ov u_n$ uniformly converges to $u$,  we find that $\ov u_n$ is subtangent to $v$ at some point $(s_n,t_n) \in \ov R \setminus  \partial_p^-R$, for $n$ large. Exploiting that $\ov u_n$ is subsolution to \eqref{stretto}, we can construct, using Perron--Ishii method, a subsolution to \eqref{HJloc} strictly greater than $v$ in a neighborhood of $(x_n,t_n)$ and equal to $u$ in $\partial_p^-R$. This is impossible by the very definition of $v$.
 \end{proof}

 \smallskip

 We record for later use in the proof of Proposition \ref{fundasuper}.

 \smallskip
 \begin{Proposition}\label{xanax}  Given
$s_0 \in (a,b)$, $c \leq - \max_s \, \min_p H(s,p)$,   we set
\[A = (a,s_0) \times (T_0,T_1) \Txt{and} \qquad  B = (s_0,b) \times (T_0, T_1). \]
 Let $u$ be an uniformly  continuous supersolution of \eqref{HJloc} in $R$,  we denote by  $v$ the maximal subsolution  in $A$ with
trace less than or equal to $u$  on $\partial^-_p A$, and  by $w$ the maximal subsolution in $B$ with trace less than or equal to $u$  on $\partial^+_p B$. Then
\begin{equation}\label{xanax00}
  G[u(s_0,\cdot),c](t) \geq \min \{G[v(s_0,\cdot),c],G[w(s_0,\cdot),c]\}(t) \Txt{for any $t \in (T_0,T_1)$.}
\end{equation}
We recall that the operator $G$ is defined in Section \ref{AAA}.
\end{Proposition}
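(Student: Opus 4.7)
The plan is to apply Proposition \ref{ginetto} with the role of ``$w$'' (in the notation of that proposition) played by $W:=G[u(s_0,\cdot),c]$ and with ``$\psi$'' taken to be $\psi(t):=\min\{G[v(s_0,\cdot),c](t),\,G[w(s_0,\cdot),c](t)\}$. The conclusion $W\geq G[\psi,c]$ provided by Proposition \ref{ginetto} will then give \eqref{xanax00} as soon as one knows that $G[\psi,c]=\psi$. This fixed--point identity follows from \eqref{pfb2tris}: each of $G[v(s_0,\cdot),c]$ and $G[w(s_0,\cdot),c]$ satisfies $f(t)\leq f(r)+c(t-r)$ for $r\leq t$, and a short two--case argument (on which of the two functions realizes $\psi$ at $r$) shows that this property is preserved under the pointwise minimum; hence $\psi(t)\leq\psi(r)+c(t-r)$ for $r\leq t$, i.e.\ $\psi\leq G[\psi,c]$, while the reverse inequality is immediate by taking $r=t$ in the defining minimum.

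Next I verify the two hypotheses of Proposition \ref{ginetto}. The initial inequality $W(T_0)=u(s_0,T_0)\geq\min\{v(s_0,T_0),w(s_0,T_0)\}=\psi(T_0)$ is immediate from $v\leq u$ on $\partial_p^- A$ and $w\leq u$ on $\partial_p^+ B$. The second hypothesis requires, at any $t_0\in(T_0,T_1)$ at which $W$ admits a $C^1$ subtangent $\varphi$ with $\varphi_t(t_0)<c$, the inequality $W(t_0)\geq\psi(t_0)$. By Proposition \ref{overpfb2} such a subtangent forces $W(t_0)=u(s_0,t_0)$, and inspection of its proof shows further that the minimum defining $W(t_0)$ is realized only at $r=t_0$; equivalently, $u(s_0,r)>u(s_0,t_0)+|c|(t_0-r)$ for every $r\in[T_0,t_0)$. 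Thus the second hypothesis reduces to the pointwise claim
\[
u(s_0,t_0)\geq\min\{G[v(s_0,\cdot),c](t_0),\,G[w(s_0,\cdot),c](t_0)\}.
\]

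Proving this pointwise inequality is the main obstacle. I plan to argue by contradiction: if it fails then $v(s_0,t_0)>u(s_0,t_0)$ and $w(s_0,t_0)>u(s_0,t_0)$ simultaneously. Since $W\leq u(s_0,\cdot)$ with equality at $t_0$, the subtangent $\varphi$ is also a $C^1$ subtangent to $u(s_0,\cdot)$ at $t_0$, recording a strong decay of $u$ in time at $(s_0,t_0)$ of slope strictly below $c\leq -\max_s\min_p H(s,p)$. I would then combine this information with the state--constraint supersolution inequalities for $v$ on the interface $\{s_0\}\times(T_0,T_1)\subset\partial \bar A$ and for $w$ on the same interface viewed inside $\bar B$, both supplied by Proposition \ref{2021}, together with the full supersolution property of $u$ in $R$, to construct a Lipschitz subsolution in $A$ that still sits below $u$ on $\partial_p^-A$ but strictly exceeds $v$ on a neighborhood of $(s_0,t_0)$, contradicting the maximality of $v$; a symmetric construction would work on the $B$ side. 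The coercivity/convexity of $H$ and the choice $c\leq -\max_s\min_p H(s,p)$ enter when verifying the subsolution inequality for the constructed competitor, and an inf--convolution in $t$ of $u$ (cf.\ Appendix \ref{AA}) may be needed to promote the $1$D time subtangent $\varphi$ to a $2$D test function compatible with $H$, since $u$ is only assumed uniformly continuous.
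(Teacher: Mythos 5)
Your reduction is sound as far as it goes: with $W:=G[u(s_0,\cdot),c]$ and $\psi:=\min\{G[v(s_0,\cdot),c],G[w(s_0,\cdot),c]\}$, the identity $G[\psi,c]=\psi$ does hold (the decay property $f(t)\le f(r)+c(t-r)$ from \eqref{pfb2tris} survives pointwise minima), Proposition \ref{ginetto} applies, and Proposition \ref{overpfb2} correctly reduces everything to the pointwise claim $u(s_0,t_0)\ge\psi(t_0)$ at those $t_0$ where $W$ admits a $C^1$ subtangent of slope $<c$. But that pointwise claim is the entire analytic content of the proposition, and what you offer for it is a plan, not a proof — and the plan as stated is aimed at the wrong target. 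If the claim fails, you get $v(s_0,t_0)>u(s_0,t_0)$ and $w(s_0,t_0)>u(s_0,t_0)$; there is nothing contradictory in this per se, since $\{s_0\}\times(T_0,T_1)$ lies in $\partial_p^+A$ (resp.\ $\partial_p^-B$), where no constraint relative to $u$ is imposed on $v$ (resp.\ $w$). Trying to contradict the \emph{maximality of $v$} by building a larger subsolution in $A$ is backwards: the hypotheses give you no mechanism for producing a competitor above $v$. The contradiction has to come from comparing a suitable \emph{global} subsolution on $R$ with the supersolution $u$, and building that global object is exactly the step you have not supplied.

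The paper's proof shows what is actually needed. One approximates $u$ from below by Lipschitz data $u_n$ (Lemma \ref{hopf}), takes the maximal subsolutions $\ov v_n$ on $A$ and $\ov w_n$ on $B$, and then — this is the crucial device — uses Proposition \ref{lemfaberuno} (where the hypothesis $c\le-\max_s\min_pH$ enters through \eqref{catania55}) to show that replacing the lateral trace on $\{s_0\}\times(T_0,T_1)$ by the common function $\min\{G[\ov v_n(s_0,\cdot),c],G[\ov w_n(s_0,\cdot),c]\}$ still yields admissible data on $\partial_p^+A$ and $\partial_p^-B$. Taking minima with the corresponding maximal subsolutions (Lemma \ref{catania1}) produces $v_n^*$ on $A$ and $w_n^*$ on $B$ that \emph{agree} on the interface, so they merge into a Lipschitz a.e.\ (hence, by convexity, viscosity) subsolution $\varphi_n$ on all of $R$ lying below $u$ on $\partial_pR$. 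Theorem \ref{nobarles} then gives $\varphi_n\le u$ in $R$, whence $\min\{G[\ov v_n(s_0,\cdot),c],G[\ov w_n(s_0,\cdot),c]\}\le G[u(s_0,\cdot),c]$, and one concludes by Corollary \ref{cormaxistaruni} and Lemma \ref{lippi}. None of this gluing machinery appears in your sketch, and without it (or an equivalent construction) the key step remains unproved; your appeal to ``state--constraint inequalities plus an inf--convolution'' does not indicate how the mismatch of $v$ and $w$ on the interface is to be resolved, which is the actual difficulty.
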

\begin{proof}  We denote by $u_n$, with $u_n \leq u$,  the Lipschitz continuous  approximation from below of  $u$ in $\partial_p R$  introduced in Lemma \ref{hopf}. We further denote $\ov v_n$, $\ov w_n$ the maximal subsolutions on $A$, $B$, respectively, with trace less than or equal to $u_n$ on $\partial_p^-A$, $\partial_p^+ B$, respectively.  We know by Corollary \ref{cormaxistaruni} that
\begin{equation}\label{xanax1}
  \ov v_n \to v \txt{uniformly in $\ov A$} \txt{and} \quad  \ov w_n \to v \txt{uniformly in $\ov B$}\end{equation}
Owing to Proposition \ref{lemfaberuno}  the function equal to $u_n$ in $[a,s_0]  \times \{T_0\}$ and
\begin{equation}\label{xanax33}
 \min \{G[\ov v_n(s_0,\cdot),c],G[\ov w_n(s_0,\cdot),c]
\end{equation}
 on $\{s_0\} \times (T_0, T_1)$  is admissible on $\partial^+_p A$, and the same  holds true on $\partial^-_p B$ for the function equal to $u_n$ on
$[s_0,b]  \times  \{T_0\}$  and to the  function  in \eqref{xanax33}
 on $\{s_0\} \times (T_0, T_1)$.
We denote by $\widetilde v_n$, $\widetilde v_n$ the corresponding maximal subsolutions on $A$, $B$, respectively. We further set
\[ v^*_n = \min\{\ov v_n, \widetilde v_n\} \Txt{and} \qquad w^*_n = \min\{\ov w_n, \widetilde w_n\},\]
by Lemma \ref{catania1} $v^*_n$ and $w^*_n$ are subsolutions to \eqref{HJloc} in $A$ and $B$, respectively.
We have
\begin{eqnarray*}
  v^*_n &\leq& u_n \leq u \Txt{on $\partial_p^-R$} \\
  w^*_n &\leq& u_n \leq u \Txt{on $\partial_p^+R$} \\
  v^*_n &=& u_n \leq u \Txt{on $[a,s_0] \times \{T_0\}$} \\
  w^*_n &=& u_n \leq u \Txt{on $[s_0,b] \times \{T_0\}$}\\
  v^*_n &=&w^*_n \qquad\Txt{on $\{s_0\} \times (T_0,T_1)$}
\end{eqnarray*}
We consider the function  $\varphi_n$   defined in the whole of $R$  by merging together $ v^*_n$, $ w^*_n$. Note that $\varphi_n$ is Lipschitz continuous. In addition, since $\varphi_n$ is subsolution in $A$ and in $B$, it is subsolution in the whole of $R$ for the Hamiltonian is convex in $p$ so that  the notions of viscosity and a.e. subsolution coincide. We also have
\[ \varphi_n  \leq u \Txt{for any $n$, on $\partial R$.}\]
We therefore  get  by the comparison principle given in Theorem \ref{nobarles}
\[\varphi_n \leq u \Txt{ in $R$.}\]
 and consequently
 \[\varphi_n(s_0,\cdot)= \min \{G[\ov v_n(s_0,\cdot),c],G[\ov w_n(s_0,\cdot),c]\}(t) \leq G[u(s_0,\cdot),c](t)  \Txt{ for any $t$.} \]
We  get \eqref{xanax00} passing at the limit as $n$ goes to infinity in the above formula and taking into account \eqref{xanax1} plus Lemma \ref{lippi}.

\end{proof}

 \medskip

 \subsection{Finite speed of propagation }
In this section we assume the rectangle
$R = (a,b) \times (T_0,T_1)$ to be bounded with $a=0$, $b=1$.

\smallskip

  \begin{Lemma}\label{propaga1}  Let $u_0$ be  a Lipschitz continuous  initial datum on $[0,1] \times \{T_0\}$. We consider two  Lipschitz continuous solutions $u$, $v$ to \eqref{HJloc} in $R$ agreeing with  $u_0$ on $[0,1] \times \{T_0\}$. Then  there is $\de
>0$ depending on $H$  and the Lipschitz constants of $u$, $v$,   such that
\[ u= v \Txt{in $[1/2 - \de, 1/2 + \de] \times [T_0, T_0+\de]$.}\]
\end{Lemma}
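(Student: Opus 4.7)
This is a finite speed of propagation statement for Lipschitz solutions of \eqref{HJloc}. Let $L>0$ be a common Lipschitz bound for $u$ and $v$ on $\overline R$, and let $C\geq 1$ be a constant chosen via (H4) large enough that $|H(s,p)-H(s,q)|\leq C|p-q|$ for all $s\in[0,1]$ and $p,q\in[-3L,3L]$. I would pick $\delta>0$ so small that $\delta(1+C)<1/2$ and $T_0+\delta<T_1$, and consider the trapezoid
\[
D:=\bigl\{(s,t) : T_0\leq t\leq T_0+\delta,\ |s-1/2|\leq \delta+C(T_0+\delta-t)\bigr\}\subset R,
\]
whose bottom lies on $[0,1]\times\{T_0\}$ (where $u=v=u_0$) and which contains the target rectangle $[1/2-\delta,1/2+\delta]\times[T_0,T_0+\delta]$. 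It then suffices to prove $u=v$ on $D$.

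The core step is a comparison on the enclosing rectangle $R':=(1/2-\delta-C\delta,1/2+\delta+C\delta)\times(T_0,T_0+\delta)$ between $u$ and a cone perturbation of $v$. Setting $K:=2L/C$ (so $K\leq 2L$ since $C\geq 1$) and
\[
\chi(s,t):=K\bigl(|s-1/2|-\delta-C(T_0+\delta-t)\bigr)_+,
\]
one has $\chi\equiv 0$ on $D$ and in a neighborhood of the axis $s=1/2$, while on the smooth part of $\{\chi>0\}$ one has $\partial_t\chi=KC$ and $|\partial_s\chi|=K$. The central claim is that $v+\chi$ is a viscosity supersolution of \eqref{HJloc} on $R'$: at any smooth point of $\chi$, if $\phi$ is a $C^1$ subtangent to $v+\chi$, then $\phi-\chi$ is a $C^1$ subtangent to $v$, and the supersolution property of $v$ combined with (H4) applied on $[-(L+K),L+K]\subset[-3L,3L]$ yields
\[
\phi_t+H(s,\phi')\geq\partial_t\chi+\bigl[H(s,\phi')-H(s,\phi'-\partial_s\chi)\bigr]\geq\partial_t\chi-C|\partial_s\chi|=KC-CK=0.
\]
On $\partial_pR'$: the bottom gives $u=v\leq v+\chi$ since $\chi\geq 0$, while on each lateral side $\chi(s,t)=KC(t-T_0)=2L(t-T_0)$ dominates the Lipschitz bound $|u-v|\leq 2L(t-T_0)$. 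Theorem \ref{nobarles} then yields $u\leq v+\chi$ throughout $R'$, and restricting to $D$ (where $\chi\equiv 0$) gives $u\leq v$ on $D$; swapping the roles of $u$ and $v$ gives the reverse inequality.

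The main obstacle is the verification of the supersolution property of $v+\chi$ across the kinks of $\chi$ along the slanted boundary of $D$, where $\chi$ is only semiconvex. Since $\chi=Kg_+$ for the smooth function $g(s,t):=|s-1/2|-\delta-C(T_0+\delta-t)$ (away from $s=1/2$), a $C^1$ subtangent $\phi$ to $v+\chi$ at a kink decomposes via the standard max--rule: in a neighborhood split by $\{g=0\}$, $\phi$ is one-sided below $v$ on the side where $g\leq 0$, while $\phi-Kg$ is one-sided below $v$ on the side where $g\geq 0$. A routine perturbation argument (e.g.\ tilting $\phi$ by a $C^2$ bump supported on one side) turns each one-sided subtangent into an honest subtangent to $v$, and in both cases the computation above, using (H4) with the tuned slope $C$, produces the required nonnegative value. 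Apart from this technical verification, the proof is a direct application of Theorem \ref{nobarles}.
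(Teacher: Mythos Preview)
Your barrier/comparison strategy is sound and is a genuine alternative to the paper's argument, which instead proves that $w:=u-v$ (and $v-u$) satisfies $w_t-M|w'|\le 0$ a.e.\ and then compares with the explicit Hopf--Lax type solution of $w_t-M|w'|=0$ to obtain vanishing in a central rectangle. Your route has the advantage of staying with the original Hamiltonian and a single invocation of Theorem~\ref{nobarles}.

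There is, however, a real gap in your treatment of the kinks of $\chi$. The assertion that a one--sided subtangent to $v$ on a half--space can be turned, by a ``routine perturbation'', into an honest subtangent to $v$ is false in general, even when $v$ is a viscosity supersolution on a full open neighbourhood of the contact point. Concretely, take $H(s,p)=-|p|$ and $v(s,t)=t+|s|$: this is a viscosity supersolution of $v_t-|v'|=0$ on all of $\R^2$, yet $\phi(s,t)=10s+t$ is a subtangent to $v$ constrained to $\{s\le 0\}$ at the origin with $\phi_t-|\phi'|=-9<0$. The point is that a constrained subtangent only controls the tangential component of the gradient; its normal component is unconstrained, so no bump supported on one side can manufacture a genuine interior test function with the same gradient. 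In your situation you do have subtangent information on \emph{both} sides of $\{g=0\}$, but with different test functions ($\phi$ and $\phi-Kg$), and these two one--sided conditions do not combine into a single two--sided subtangent to $v$.

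The clean fix is to avoid the kink altogether: replace $(\,\cdot\,)_+$ by a smooth convex $\rho^\eps$ with $\rho^\eps(r)=0$ for $r\le 0$, $(\rho^\eps)'\in[0,1]$, and $|\rho^\eps-(\cdot)_+|\le\eps$, and set $\chi^\eps:=K\,\rho^\eps\!\circ g$. Then $\chi^\eps$ is $C^1$ on $R'$ (it vanishes near $s=1/2$), and $\partial_t\chi^\eps-C|\partial_s\chi^\eps|=KC(\rho^\eps)'(g)-CK(\rho^\eps)'(g)=0$ pointwise, so your smooth computation shows $v+\chi^\eps$ is a viscosity supersolution on $R'$. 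On $\partial_pR'$ you only get $u\le v+\chi^\eps+K\eps$, but this suffices: Theorem~\ref{nobarles} gives $u\le v+\chi^\eps+K\eps$ on $R'$, and letting $\eps\to 0$ yields $u\le v+\chi$, hence $u\le v$ on $D$. The rest of your argument is correct.
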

\begin{proof} We denote by $L$ a Lipschitz constant of both $u$, $v$ in $R$, we further denote by $M$ a Lipschitz constant of $H(s, \cdot)$ in $[-L,L]$, for any $s \in [0,1]$, see assumption {\bf (H4)}, that we can assume greater than $3$, so that
\[|H(s,u'(s,t)) - H(s,v'(s,t))| \leq M \, |u'(s,t)) -v'(s,t))|\]
 for  a.e. $ (s,t) \in R$. We then have
  \begin{eqnarray*}
    0 &=& u_t(s,t) + H(s,u'(s,t)) - v_t(s,t) - H(s,v'(s,t)) \\
    &\geq& (u_t(s,t) - v_t(s,t)) - M \, |u'(s,t)-v'(s,t)|
  \end{eqnarray*}
a.e. $(s,t) \in R$.  Consequently $(u-v)$ and similarly $(v-u)$ are a.e. and viscosity subsolutions to the equation
\begin{equation}\label{propaga11}
w_t - M \, |w'| =0 \Txt{in $R$}
\end{equation}
attaining the value $0$ on $[0,1] \times \{T_0\}$ and $u-v$ (resp. $v-u$) on the rest of the parabolic boundary of $R$. We know that the solution  of \eqref{propaga11} with these boundary conditions   is given by
\[(s,t) \mapsto  \max \{(u-v)(s^*,t^*) \mid (s^*,t^*) \in  \partial_p \big ((0,1) \times (T_0,t) \big ), \, |(s,t)-(s^*,t^*)| \leq M \, (t - t^*)\}. \]
We take
\[(s,t) \in (1/2-1/M,1/2 + 1/M) \times (T_0,T_0+\de)=:U,\]
where $\de$ is a positive constant to be determined, we recall that  $M$ has been taken greater that $3$.  If $(s^*,t^*)$ is in  the lateral part of the parabolic boundary of $R$ then
\[|(s,t)-(s^*,t^*)| \geq \min \{ 1-s,s \} \geq \frac 12 - \frac 1M= M \, \left ( \frac 1{2M} - \frac1{M^2} \right ).\]
It is then enough to take
\[\de < \frac 1{2M} - \frac1{M^2} < \frac 1M\]
to see that the lateral boundary does not have influence in the above formula of the solution at $(s,t) \in U$. This shows that the solution is vanishing in $U$, since $u-v$ and $v-u$ are both less than or equal the solution by the comparison principle, we deduce
\[u=v \Txt{in $U$.} \]
Since $U \supset [1/2 - \de, 1/2 + \de] \times [T_0, T_0+\de]$, we get the assertion.
\end{proof}

\smallskip

We derive:

\begin{Corollary}\label{lemfaberdue} Let $u_0$, $v_0$ be  admissible Lipschitz continuous boundary data for \eqref{HJloc} in $\partial^-_p R$ and $\partial^+_p R$,
respectively, with $u_0=v_0$ on
 $[0,1] \times \{T_0\}$.  Then the merge of the two functions is
 admissible  on $\partial_p \big ([0,1] \times [T_0, T_0+\de] \big )$, for
 a suitable  constant $\de > 0$ solely depending on  the Lipschitz constants of $u_0$, $v_0$ and $H$.
\end{Corollary}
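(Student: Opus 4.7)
The strategy is to glue together the two maximal subsolutions associated with the data $u_0$ on $\partial_p^-R$ and $v_0$ on $\partial_p^+R$, exploiting the finite speed of propagation given by Lemma \ref{propaga1} to ensure that they coincide on a neighborhood of the vertical line $\{s=1/2\}$ for a short time, so that the glued function is Lipschitz and remains a subsolution through the interface.

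\smallskip

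\textbf{Step 1 (the two candidates).} Since $u_0$ is admissible on $\partial_p^-R$, Proposition \ref{maxistar} produces a Lipschitz continuous solution $u$ of \eqref{HJloc} in $R$ that attains $u_0$ on $\partial_p^-R$, with Lipschitz constants controlled by $H$ and $\Lip u_0$. Symmetrically (the argument of Proposition \ref{maxistar} works verbatim with $\partial_p^+R$ in place of $\partial_p^-R$, cf.\ Remark \ref{fabro}), there is a Lipschitz continuous solution $v$ of \eqref{HJloc} in $R$ agreeing with $v_0$ on $\partial_p^+R$, with Lipschitz constants controlled by $H$ and $\Lip v_0$.

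\smallskip

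\textbf{Step 2 (matching at $t=T_0$).} Since $u_0=v_0$ on $[0,1]\times\{T_0\}$, both $u$ and $v$ share the same Lipschitz initial trace on $[0,1]\times\{T_0\}$. Applying Lemma \ref{propaga1} to these two solutions produces a constant $\delta>0$, depending only on $H$ and on the Lipschitz constants of $u$ and $v$ (hence, by Step 1, only on $H$ and on $\Lip u_0$, $\Lip v_0$), such that
\[
u = v \qquad\text{in } \bigl[\tfrac12-\delta,\tfrac12+\delta\bigr]\times[T_0,T_0+\delta].
\]

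\smallskip

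\textbf{Step 3 (the merge).} Shrinking $\delta$ if necessary, define $\varphi$ on $[0,1]\times[T_0,T_0+\delta]$ by
\[
\varphi(s,t) = \begin{cases} u(s,t) & \text{if } s\le 1/2,\\[2pt] v(s,t) & \text{if } s\ge 1/2. \end{cases}
\]
The coincidence from Step 2 shows that the two definitions agree on a full neighborhood of $\{s=1/2\}$, so $\varphi$ is well defined and Lipschitz continuous. It is a viscosity subsolution of \eqref{HJloc} on $(0,1/2)\times(T_0,T_0+\delta)$ and on $(1/2,1)\times(T_0,T_0+\delta)$, and in a neighborhood of $\{s=1/2\}$ it coincides with the subsolution $u$; hence $\varphi$ is a Lipschitz, a.e.\ subsolution of \eqref{HJloc} in the full rectangle, and since $H$ is convex in $p$ this is equivalent to being a viscosity subsolution.

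\smallskip

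\textbf{Step 4 (trace on the parabolic boundary).} By construction, $\varphi=u=u_0$ on $\{0\}\times[T_0,T_0+\delta]$, $\varphi=v=v_0$ on $\{1\}\times[T_0,T_0+\delta]$, and $\varphi=u_0=v_0$ on $[0,1]\times\{T_0\}$. Thus $\varphi$ is an (uniformly) continuous subsolution of \eqref{HJloc} in $[0,1]\times[T_0,T_0+\delta]$ whose trace on the parabolic boundary is precisely the merge of $u_0$ and $v_0$, proving the claimed admissibility. The main (and only) subtle point is ensuring that the two pieces glue consistently across the interface; the finite speed of propagation of Lemma \ref{propaga1} is exactly the tool that makes this possible on the short time interval $[T_0,T_0+\delta]$.
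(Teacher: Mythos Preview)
Your proof is correct and follows essentially the same approach as the paper: take the maximal Lipschitz (sub)solutions $u$, $v$ attaining $u_0$ on $\partial_p^-R$ and $v_0$ on $\partial_p^+R$, invoke Lemma~\ref{propaga1} to obtain coincidence on a central strip $[1/2-\delta,1/2+\delta]\times[T_0,T_0+\delta]$, and glue the two pieces across $s=1/2$. The paper's version is slightly more terse (it simply declares the glued function a ``new solution''), whereas you spell out the a.e.\ subsolution plus convexity argument at the interface; otherwise the arguments are identical.
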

\begin{proof}    We
denote  by $u$, $v$ the maximal (sub)solutions agreeing with $u_0$ on
$\partial^-_p R$ and $v_0$ on $\partial^+_p R$, respectively. We derive from Proposition \ref{maxistar} and Remark  \ref{fabro} that their Lipschitz constants depend on those of $u_0$, $v_0$ and clearly on $H$.   By
Lemma \ref{propaga1} $u$ and $v$ coincide in
\[ [1/2-\de,1/2 + \de] \times [T_0, T_0+\de], \]
for a suitable $\de$,  we define a new solution $w$ setting
\[ w =  \left \{  \begin{array}{cc}
         u & \quad\hbox{on $[T_0, 1/2- \de) \times [T_0, T_0+\de]$}   \\
          v & \quad\hbox{on $( 1/2+\de,1] \times [T_0,T_0+ \de]$} \\
          u=v & \quad\hbox{on $[1/2 - \de, 1/2+\de) \times [T_0,
          T_0+ \de]$}\\
          \end{array} \right . \]
  The function $w$ coincides with $u_0$ on $\partial^-_p  \big ([0,1] \times [0,\de] \big )$ and with $v_0$ in $\partial^+_p \big ([0,1] \times [0,\de] \big )$.   This proves the assertion
\end{proof}

\bigskip

\section{A semidiscrete equation} \label{semidiscrete}
We set
\begin{eqnarray*}
  \F  &=&    \UC((\Gamma \times \{T_0\}) \cup (\VV \times [T_0,T_1)) \\
  R &=& (0,1) \times (T_0,T_1)  \Txt{with $T_1 \leq + \infty$.}
\end{eqnarray*}

 We define, for any $x \in \VV$, the operator  $F_x: \F \to \UC([T_0,T_1))$ in the interval $[T_0,T_1)$ through two steps:
  \begin{itemize}
     \item[--] Given  $u \in \F$ and  $\ga \in \Gamma_x$
we indicate by $(s,t) \mapsto F_\ga[u](s,t)$,
 the maximal among the  uniformly continuous  subsolutions to \eqref{HJg} in $R$ with trace less than or equal to  the merge of $u \circ \ga$ and $u(\ga(0),t)$ on $\partial_p^- R$.  Note that  $F_\ga[u]$ is uniformly continuous by Proposition \ref{maxistaruni}, and in addition Lipschitz continuous if  $u$ is Lipschitz continuous by Proposition \ref{maxistar};
     \item[--]  we  set
\[F_x[u] = \min_{\ga \in \Gamma_x} F_\ga[u](1,\cdot) .\] \end{itemize}
\smallskip

Note that
\begin{equation}\label{mora}
  F_x[u](T_0)= u(x,T_0) \Txt{for any $x\in \VV$, $u \in \F$}
\end{equation}

\smallskip

We directly derive from the definition of $F_x$:

\begin{Lemma}\label{lemfaber0} For any $u \in \F$, $x \in \VV$
\begin{itemize}
    \item[{\bf (i)}] $F_x[u + a]= F_x[u] + a$ for any  $a \in \R$;
    \item[{\bf (ii)}] if $v \in \F$ with  $v \geq u$ then  $F_x [v] \geq F_x[u]$ in $[T_0,T_1)$.

\end{itemize}
\end{Lemma}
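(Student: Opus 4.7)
The plan is to verify both properties directly from the two--stage definition of $F_x$: first establish the analogous statements for each operator $F_\ga$ with $\ga \in \Gamma_x$, then pass to the minimum. Both claims are consequences of elementary features of the equation \eqref{HJg} together with the definition of maximal subsolution, and I do not expect any real obstacle; the main point is just unwinding the definition carefully.

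For \textbf{(i)}, the crucial feature is that \eqref{HJg} has no zero--order term, so if $w$ is an uniformly continuous subsolution in $R$ then $w + a$ is an uniformly continuous subsolution as well, for any $a \in \R$. Moreover the trace on $\partial_p^- R$ prescribed in the definition of $F_\ga[u+a]$, namely the merge of $(u+a)\circ\ga$ on $[0,1] \times \{T_0\}$ and $(u+a)(\ga(0),t)$ on $\{0\} \times [T_0,T_1)$, is precisely the trace associated with $u$ translated pointwise by the constant $a$. Consequently the map $w \mapsto w + a$ is a bijection between the family of uniformly continuous subsolutions used to define $F_\ga[u]$ and the family used to define $F_\ga[u+a]$. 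Taking suprema gives $F_\ga[u+a] = F_\ga[u] + a$ in $R$, and evaluating at $s=1$ followed by the minimum over $\ga \in \Gamma_x$ yields (i).

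For \textbf{(ii)}, the argument is pure monotonicity with respect to the constraint. If $v \geq u$ on $(\Gamma \times \{T_0\}) \cup (\VV \times [T_0,T_1))$, then the merge of $v \circ \ga$ and $v(\ga(0),t)$ dominates pointwise on $\partial_p^- R$ the analogous trace built from $u$. Therefore every uniformly continuous subsolution of \eqref{HJg} whose trace is $\leq$ the $u$--datum automatically satisfies trace $\leq$ the $v$--datum, so it is admissible in the family defining $F_\ga[v]$. Taking suprema over a larger family can only increase the result, hence $F_\ga[v] \geq F_\ga[u]$ in $R$. Evaluating at $s=1$ and taking the minimum over $\ga \in \Gamma_x$ preserves the inequality, giving (ii).
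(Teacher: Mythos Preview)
Your proof is correct and matches the paper's approach: the paper states that the lemma is ``directly derive[d] from the definition of $F_x$'' and gives no further argument, and your proposal simply spells out that derivation in the natural way.
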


\medskip

\subsection{ Definition of the problem and comparison result}    Given a flux limiter $x \mapsto c_x$ on $\VV$, we consider the semidiscrete equation
\begin{equation}\label{discr} \tag{{\bf Discr}}
  u(x,t) =  G[F_x[u],c_x](t) .
\end{equation}
See Section \ref{AAA} for the definition of the operator $G$.  By uniformly continuous  {\em (sub / super) solution} of it in the interval $(T_0,T_1)$, we mean a  function
\[v \in \UC((\Gamma \times \{0\}) \cup (\VV \times [T_0,T_1)))\]   which satisfies  pointwise the (in)equalities in \eqref{discr} for any $(x,t) \in \VV \times (T_0,T_1)$.  If
\[v(x,t) <  G[F_x[v],c_x](t)   \Txt{for any $x \in \VV$, $t \in (T_0,T_1)$}\]
then we say that $v$ is a {\em strict} subsolution.

\smallskip

\begin{Lemma}\label{lemuni} Let $u$ be an  uniformly continuous subsolution  to \eqref{discr} in $(T_0,T_1)$ , then $u - \eps \, (t-T_0)$ is a strict subsolution for any $\eps >0$.
\end{Lemma}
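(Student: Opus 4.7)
The plan is to exploit that $F_\ga[u] - \eps(t-T_0)$ is a \emph{strict} subsolution of \eqref{HJg} whenever $u$ is a subsolution to \eqref{discr}, so that Lemma \ref{lemfabertre} produces strict separation on each arc which then propagates through the operator $G$. Fix $x \in \VV$, $\ga \in \Gamma_x$, and set $v := u - \eps(t-T_0)$, $\widetilde u_\ga := F_\ga[u] - \eps(t-T_0)$. Any $C^1$ supertangent $\widetilde\psi$ to $\widetilde u_\ga$ at $(s_0,t_0)$ produces $\widetilde\psi + \eps(t-T_0)$ as a supertangent to $F_\ga[u]$, whence $\widetilde\psi_t + H_\ga(s_0,\widetilde\psi') \leq -\eps$; thus $\widetilde u_\ga$ is a uniformly continuous strict subsolution of \eqref{HJg} in $R$. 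On $[0,1] \times \{T_0\}$ one has $\widetilde u_\ga(s,T_0) = F_\ga[u](s,T_0) = u(\ga(s),T_0) = v(\ga(s),T_0)$ (using that $F_\ga[u]$ attains its initial datum by Proposition \ref{maxistaruni}), while on $\{0\} \times [T_0,T_1)$ one has $\widetilde u_\ga(0,t) = F_\ga[u](0,t) - \eps(t-T_0) \leq u(\ga(0),t) - \eps(t-T_0) = v(\ga(0),t)$, so the trace of $\widetilde u_\ga$ on $\partial_p^- R$ is dominated by the merge entering the definition of $F_\ga[v]$.

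Let $\widetilde v_\ga$ denote the maximal subsolution of \eqref{HJg} whose trace on $\partial_p^- R$ equals that of $\widetilde u_\ga$. Lemma \ref{lemfabertre} yields $\widetilde v_\ga > \widetilde u_\ga$ on $R \cup (\{1\} \times (T_0,T_1))$, and by the trace estimates above $\widetilde v_\ga$ is a competitor in the definition of $F_\ga[v]$; maximality therefore forces $F_\ga[v] \geq \widetilde v_\ga > \widetilde u_\ga$ at $s=1$. Taking the minimum over the finite set $\Gamma_x$ preserves the strict inequality, giving
\[
F_x[v](t) > F_x[u](t) - \eps(t-T_0) \qquad \text{for every } t \in (T_0, T_1),
\]
while $F_x[v](T_0) = F_x[u](T_0) = u(x,T_0)$ by \eqref{mora}.

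To pass through $G$, fix $t \in (T_0, T_1)$ and let $r_* \in [T_0, t]$ realize $G[F_x[v], c_x](t) = F_x[v](r_*) + c_x(t-r_*)$. If $r_* > T_0$, combining the strict inequality above at $r_*$ with $F_x[u](r_*) + c_x(t-r_*) \geq G[F_x[u], c_x](t)$ and the subsolution condition $u(x,t) \leq G[F_x[u], c_x](t)$ gives
\[
G[F_x[v],c_x](t) > G[F_x[u],c_x](t) - \eps(r_* - T_0) \geq u(x,t) - \eps(t-T_0) = v(x,t).
\]
If $r_* = T_0$, then $G[F_x[v],c_x](t) = F_x[u](T_0) + c_x(t-T_0) \geq G[F_x[u],c_x](t) \geq u(x,t) > v(x,t)$. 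Either way $v(x,t) < G[F_x[v],c_x](t)$, which is the required strict subsolution property. The only delicate point is the middle paragraph: one genuinely needs Lemma \ref{lemfabertre} (not merely the comparison principle), because the competitor $\widetilde u_\ga$ has trace only dominated by, not equal to, the merge prescribed for $F_\ga[v]$, and the strict separation must come from the $-\eps$ slack produced by the perturbation.
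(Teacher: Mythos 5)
Your proof is correct and follows essentially the same route as the paper: strictness of $F_\ga[u]-\eps\,(t-T_0)$ as a subsolution of \eqref{HJg}, Lemma \ref{lemfabertre} to obtain strict separation at $s=1$, the minimum over $\Gamma_x$, and a case split on where the minimum defining $G[F_x[v],c_x](t)$ is attained. The only blemish is the final equality in your display for the case $r_*>T_0$: since $r_*\leq t$ one has $u(x,t)-\eps\,(r_*-T_0)\geq u(x,t)-\eps\,(t-T_0)=v(x,t)$ rather than equality, which still yields the required strict inequality.
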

\begin{proof} By the very definition of $F_\ga$,  the function
$F_\ga[u](s,r)  - \eps \, (r-T_0) $ is a strict subsolution of \eqref{HJg} in $R$
for any arc $\ga$, then we apply Lemma \ref{lemfabertre}  to $F_\ga[u](s,r)  - \eps \, r $  to get
\[ F_\ga[u](s,r)  - \eps \, (r-T_0) < F_\ga[u  - \eps \, (t-T_0)](s,r) \Txt{in $\ov R \setminus \partial^-_p R$.}\] and in particular
\[F_\ga[u](1,r)  - \eps \, (r-T_0) < F_\ga[u  - \eps \, (t-T_0)](1,r) \Txt{for any $\ga$, $r \in (T_0,T_1)$.}\]
 This implies
 \begin{equation}\label{lemuni1}
 F_x[u](r)  - \eps \, (r-T_0) < F_x[u  - \eps \, (t-T_0)](r) \Txt{for any $r \in (T_0,T_1)$.}
\end{equation}
 Since $u$ is subsolution to
\eqref{discr}, we derive from \eqref{lemuni1} and  the definition of $G$, see \eqref{defG},  that for any $(x,t_0) \in \VV \times (T_0,T_1)$, $r \in (T_0, t_0]$
\begin{eqnarray*}
  u(x,t_0) - \eps \, (t_0-T_0)  &\leq&  G[F_x[u],c_x](t_0) - \eps \, (t_0-T_0)  \\ &\leq& F_x[u](r) + c_x \,(t_0-r) - \eps \, (t_0-T_0)  \\
 &\leq& F_x[u](r) + c_x \,(t_0-r) - \eps \, r  \\ &<& F_x[u  - \eps \,(t-T_0)] (r) + c_x \, (t_0-r).
\end{eqnarray*}
This shows the assertion provided that
\[G[F_x[u  - \eps \,(t-T_0)],c_x](t_0) = F_x[u  - \eps \,(t-T_0)] (r) + c_x \, (t_0-r) \]
for some $r \in (T_0,t_0]$, otherwise we have
\[ G[F_x[u  - \eps \,(t-T_0)], c_x](t_0) = u(x,T_0)+ c_x \, (t_0 - T_0).\]
In this case we exploit that $u$ is subsolution of \eqref{discr} in $(T_0,T_1)$ and \eqref{mora} to obtain
\begin{eqnarray*}
  u(x,t_0) - \eps \, (t_0 -T_0) &<& G[F_x(u),c_x](t_0) \leq u(x,T_0)+ c_x \, (t_0-T_0) \\
   &=& G[F_x[u  - \eps \,(t-T_0)],c_x](t_0).
\end{eqnarray*}
This concludes the proof.

\end{proof}

\smallskip

\begin{Theorem}\label{unicum} Let $u$, $v$ be uniformly continuous  sub and supersolution to \eqref{discr},  respectively, in $(T_0,T_1)$ with
\[ u(\cdot,T_0) \leq v(\cdot,T_0) \qquad\hbox{in $\VV$} \]
   then
\[ u \leq v  \qquad\hbox{on $\VV \times [T_0,T_1)$.}\]
\end{Theorem}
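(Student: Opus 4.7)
The strategy is a contradiction argument based on a strict-subsolution perturbation combined with the monotonicity and translation invariance of the operators $F_\ga$ and $G$. Given $\eps > 0$, set $u_\eps := u - \eps(t - T_0)$; by Lemma \ref{lemuni}, $u_\eps$ is a strict subsolution of \eqref{discr} in $(T_0, T_1)$. Since $u_\eps(\cdot, T_0) = u(\cdot, T_0) \leq v(\cdot, T_0)$ on $\VV$, showing $u_\eps \leq v$ on $\VV \times [T_0, T_1)$ for every $\eps > 0$ and then letting $\eps \to 0$ yields the claim. Throughout we use that the hypothesis on $\VV \times \{T_0\}$ is accompanied, as implicit in the semidiscrete setup, by the same ordering of the initial data on all of $\Gamma \times \{T_0\}$, so that the traces fed into each $F_\ga$ are correctly ordered.

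Assume for contradiction that
\[
M := \sup_{(x,t) \in \VV \times [T_0, T_1)} \big(u_\eps(x,t) - v(x,t)\big) > 0.
\]
Working first on a bounded sub-interval $[T_0, T]$ with $T < T_1$, the uniform continuity of $u_\eps, v$ and the finiteness of $\VV$ deliver a maximiser $(x_0, t_0)$, and the initial hypothesis forces $t_0 > T_0$. The argument below will show this is impossible, so $M \leq 0$; letting $T \to T_1$ then concludes.

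The key chain of inequalities combines translation invariance and monotonicity. Since $u_\eps \leq v + M$ pointwise on $(\Gamma \times \{T_0\}) \cup (\VV \times [T_0, T_1))$, the very definition of $F_\ga$ as a maximal subsolution gives at once $F_\ga[u_\eps] \leq F_\ga[v + M] = F_\ga[v] + M$ for every $\ga \in \Gamma_{x_0}$, whence $F_{x_0}[u_\eps] \leq F_{x_0}[v] + M$ upon taking the minimum over $\ga$. Direct inspection of \eqref{defG} shows that $G[\cdot, c_{x_0}]$ is monotone in its first argument and satisfies $G[\psi + M, c_{x_0}] = G[\psi, c_{x_0}] + M$, so
\[
G\big[F_{x_0}[u_\eps], c_{x_0}\big](t_0) \leq G\big[F_{x_0}[v], c_{x_0}\big](t_0) + M.
\]
Combining the strict subsolution inequality for $u_\eps$ at $(x_0, t_0)$ with the supersolution inequality for $v$ at the same point,
\[
u_\eps(x_0, t_0) < G\big[F_{x_0}[u_\eps], c_{x_0}\big](t_0) \leq G\big[F_{x_0}[v], c_{x_0}\big](t_0) + M \leq v(x_0, t_0) + M,
\]
which contradicts $u_\eps(x_0, t_0) - v(x_0, t_0) = M$.

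The main obstacle is the mild technical issue of attaining the supremum $M$ at an interior time $t_0 > T_0$ rather than asymptotically as $t \to T_1$; this is handled by the finite-horizon reduction and, if necessary, by an additional infinitesimal time-penalisation absorbed into the limits $\eps \to 0$ and $T \to T_1$. The conceptually important point is that recording the excess $M$ globally and exploiting the additive shift-invariance of both $F_\ga$ and $G$ sidesteps the delicate causality analysis of $F_\ga$ that a first-crossing-time argument would otherwise require.
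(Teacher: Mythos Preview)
Your argument follows the same strategy as the paper's proof: perturb $u$ to a strict subsolution via Lemma~\ref{lemuni}, look at the maximum of the difference, and exploit the translation invariance and monotonicity of $F_x$ and $G$ (Lemma~\ref{lemfaber0}) to produce a strict inequality at the maximiser. The paper unwinds $G$ explicitly to a time $r$ realising the minimum in \eqref{defG}, whereas you work directly with the composed operator; that difference is purely cosmetic.

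There is, however, a genuine gap in your attainment step. You define $M$ as the supremum of $u_\eps - v$ over the full set $\VV \times [T_0,T_1)$, then pass to a compact sub-interval $[T_0,T]$ and take a maximiser $(x_0,t_0)$ there. Your chain of inequalities uses the \emph{global} $M$ (so that $u_\eps \leq v + M$ holds on the whole domain feeding $F_\ga$), but your contradiction line asserts $u_\eps(x_0,t_0) - v(x_0,t_0) = M$, which is only $\max_{[T_0,T]}(u_\eps - v) \leq M$. All you obtain is $\max_{[T_0,T]}(u_\eps - v) < M$, which is no contradiction. The vague appeal to ``finite-horizon reduction'' and ``additional infinitesimal time-penalisation'' does not close this; with $T_1 = +\infty$ and merely uniformly continuous data, the global supremum need not be attained, and an extra penalisation in $t$ would have to be shown compatible with the strict-subsolution property.

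The paper avoids this cleanly: it starts from a \emph{specific} point $(y,t_0)$ where $u > v$, takes $a$ to be the maximum of the difference on the compact set $\VV \times [T_0,t_0]$, and then applies the monotonicity of $F_y$ only at times $r \leq t^* \leq t_0$. This last step does rely on the causality of $F_\ga$ (that $F_\ga[w](1,r)$ depends only on the trace of $w$ for times $\leq r$), which the paper uses implicitly via Lemma~\ref{lemfaber0}. So you do not, in the end, ``sidestep the delicate causality analysis of $F_\ga$'': either you accept causality and work with the local maximum $a$ on $[T_0,t_0]$, as the paper does, or you keep the global $M$ and must actually prove it is attained.
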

\begin{proof} We assume for
purposes   of contradiction   that $u > v$ for some
$(y,t_0) \in \VV \times (T_0,T_1)$.   By taking $\eps$ small, we
get
\[u(y, t_0)- \eps \, (t_0 -T_0)  -v(y,t_0)>0.\]
 By replacing  $u $ by  $u- \eps \, (t-T_0)$,  for such a small $\eps$, and bearing in mind Lemma
\ref{lemuni},  we can therefore assume without loosing
generality that $u$ is a strict subsolution.

We denote by $(y,t^*)$, $t^* \in (T_0,t_0]$, a maximizer of $u - v$ in
$\VV \times [T_0,t_0]$, it   does exist because $\VV
\times [T_0,t_0]$ is a compact set and $u(y,\cdot)-v(y,\cdot)$ is  continuous.
We have
\begin{equation}\label{unicum1}
 a:= u(y,t^*) - v(y,^*) > 0.
\end{equation}
Let $r$ be a time with $r \in [T_0, t^*]$ such that
\[ v(y,t^*) \geq G[ F_y[v],c_y] (t^*)=  F_y[v](r)  +c_y \, (t^*-r) , \]
 then taking into account that
 \begin{eqnarray*}
   v(x,t) +a &\geq & u(x,t) \Txt{in $\VV \times [T_0,t_0]$}  \\
   u(y,t^*) &<& F_y[u](r) +c_y \, (t^*-r)
 \end{eqnarray*}
we can use Lemma \ref{lemfaber0} {\bf (i)}, {\bf (ii)} to  get
\begin{eqnarray*}
  v(y,t^*) + a &\geq& F_y[v](r) +c_y \, (t^*-r) + a =  F_y[ v + a](r) +c_y \, (t^*-r) \\
   &\geq& F_y[u](r) + c_y \, (t^*-r) > u(y,t^*),
\end{eqnarray*}
 in contrast with \eqref{unicum1}.
\end{proof}

\medskip

\subsection{Links between semidiscrete and Hamilton--Jacobi equation}

We proceed linking  \eqref{discr} to (HJ$\Gamma$). We consider a time interval $[T_0,T_1) \subset \R^+$ with $T_1 \leq + \infty$.

\begin{Proposition}\label{fundasub}  Let $u$ be an uniformly   continuous subsolution to (HJ$\Gamma$) in $[T_0,T_1)$  then the trace of $u$ on $(\Gamma \times \{T_0\}) \cup (\VV \times [T_0,T_1))$ is a subsolution of \eqref{discr} in  $[T_0,T_1)$.
\end{Proposition}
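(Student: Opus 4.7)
The plan is to reduce the subsolution inequality for \eqref{discr} to the maximality characterization of $G$ provided in Lemma \ref{proprovvi}. Writing $w$ for the trace of $u$ on $(\Gamma \times \{T_0\}) \cup (\VV \times [T_0,T_1))$, we first observe that $w \in \F$ since $u$ is uniformly continuous on $\Gamma \times [T_0,T_1)$. The assertion amounts to
\[ w(x,t) \leq G[F_x[w],c_x](t) \qquad \text{for every } x \in \VV,\ t \in [T_0,T_1). \]
By the maximality statement inside Lemma \ref{proprovvi}, it suffices to check that $w(x,\cdot)$ is continuous on $[T_0,T_1)$ and satisfies: \textbf{(a)} $w(x,t) \leq F_x[w](t)$ on $[T_0,T_1)$; \textbf{(b)} $\tfrac{d}{dt}\varphi(t) \leq c_x$ for every $C^1$ supertangent $\varphi$ to $w(x,\cdot)$ at every $t \in (T_0,T_1)$.

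Property (b) is exactly condition \textbf{(ii)} of Definition \ref{defsubsol} at the vertex $x$, so it holds by hypothesis. For property (a), I fix $x \in \VV$ and $\ga \in \Gamma_x$, and inspect $u \circ \ga$ on $R = (0,1) \times (T_0,T_1)$. By condition \textbf{(i)} of Definition \ref{defsubsol}, $u \circ \ga$ is a uniformly continuous viscosity subsolution of \eqref{HJg} on $R$. Its trace on $\partial_p^- R$ is precisely the merge of $s \mapsto u(\ga(s),T_0)$ on $[0,1]\times\{T_0\}$ and $t \mapsto u(\ga(0),t)$ on $\{0\}\times[T_0,T_1)$, namely the datum used to define $F_\ga[w]$. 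Consequently $u \circ \ga$ is a competitor in the supremum defining the maximal subsolution $F_\ga[w]$, whence $u \circ \ga \leq F_\ga[w]$ throughout $R$. Evaluating at $s = 1$ yields $w(x,t) = u(\ga(1),t) \leq F_\ga[w](1,t)$ for every $\ga \in \Gamma_x$ and, taking the minimum over $\ga \in \Gamma_x$, $w(x,t) \leq F_x[w](t)$ on $(T_0,T_1)$. At $t = T_0$ equality holds by \eqref{mora}, so (a) is proved on the entire interval $[T_0,T_1)$.

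Combining (a) and (b) and invoking the maximality of $G[F_x[w],c_x]$ from Lemma \ref{proprovvi} delivers $w(x,\cdot) \leq G[F_x[w],c_x](\cdot)$, which is exactly the subsolution condition for \eqref{discr}. I do not anticipate any real obstacle here: once Definition \ref{defsubsol} and the construction of $F_\ga$ from Section \ref{interval} are in hand, the argument reduces mechanically to the two maximality properties, one for $F_\ga[w]$ among uniformly continuous subsolutions of \eqref{HJg} with prescribed trace on $\partial_p^-R$, and one for $G[\cdot,c_x]$ among continuous functions below the datum obeying the supertangent bound. The genuine subtleties of the passage between (HJ$\Gamma$) and \eqref{discr}, in particular the selection of a distinguished arc at each vertex when the flux--limiter constraint is inactive, are reserved for the matching supersolution statement, Proposition \ref{fundasuper}.
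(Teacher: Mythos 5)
Your argument is correct and follows essentially the same route as the paper: establish $u(x,\cdot)\leq F_x[u]$ directly from the definition of $F_\ga$ (since $u\circ\ga$ is itself a competitor for the maximal subsolution with the same trace on $\partial_p^-R$), note that the supertangent bound at vertices is exactly condition \textbf{(ii)} of Definition \ref{defsubsol}, and conclude by the maximality characterization of $G$ in Lemma \ref{proprovvi}. The paper's proof is just a terser version of the same three steps.
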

\begin{proof} We  apply the definition of subsolution to (HJ$\Gamma$) and of $F_x[u]$ to get for every $x \in \VV$, $t \in [T_0,T_1)$, every $C^1$ supertangent $\varphi$ to $u(x,\cdot)$ at $t$
\begin{eqnarray*}
  u(x,t) &\leq& F_x[u](t)  \\
 \frac d{dt} \varphi (t) &\leq& c_x.
\end{eqnarray*}
We deduce from Lemma \ref{proprovvi}
\[ u(x,t) \leq G[F_x[u],c_x](t) \Txt{for any $(x,t) \in \VV \times [T_0,T_1)$.}\]
\end{proof}

\begin{Proposition}\label{fundasuper} Let $u$ be an uniformly  continuous supersolution to (HJ$\Gamma$) in $[T_0,T_1)$ then the trace of $u$ on $(\Gamma \times \{T_0\}) \cup (\VV \times [T_0,T_1))$ is a supersolution of \eqref{discr} in $[T_0,T_1)$.
\end{Proposition}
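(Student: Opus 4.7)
Fix a vertex $x \in \VV$. The goal is to prove $u(x,t) \geq G[F_x[u], c_x](t)$ for every $t \in [T_0, T_1)$. I would invoke Proposition~\ref{ginetto} with $w = u(x,\cdot)$, $\psi = F_x[u]$, and $a = c_x$, which reduces the claim to the two verifications: \emph{(i)} $u(x, T_0) \geq F_x[u](T_0)$, which holds with equality by \eqref{mora}; and \emph{(ii)} $u(x, t_0) \geq F_x[u](t_0)$ at every $t_0 \in (T_0, T_1)$ at which $u(x,\cdot)$ admits a $C^1$ subtangent with time derivative strictly less than $c_x$.

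The entire content of the proof is thus concentrated in verifying \emph{(ii)}. Fix such a $t_0$. The vertex supersolution condition (Definition~\ref{defsupersol}(ii)) produces an arc $\ga \in \Gamma_x$ for which every $C^1$ subtangent $\varphi$ to $u \circ \ga$ at $(1, t_0)$, constrained to $[0,1] \times \overline{(T_0, T_1)}$, satisfies the state--constraint inequality
\[
\varphi_t(1,t_0) + H_\ga(1, \varphi'(1,t_0)) \geq 0.
\]
Since $F_x[u](t_0) = \min_{\ga' \in \Gamma_x} F_{\ga'}[u](1,t_0) \leq F_\ga[u](1, t_0)$, it suffices to establish the sharper pointwise bound $u(x, t_0) \geq F_\ga[u](1, t_0)$ for this specific $\ga$.

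The key pointwise bound would be obtained by contradiction. Assuming $F_\ga[u](1, t_0) > u\circ\ga(1,t_0)$ with a strict gap $\delta > 0$, I would apply Proposition~\ref{xanax} on $R = (0,1) \times (T_0, T_1)$ with the supersolution $u \circ \ga$ and splitting point $s_0 = 1 - \epsilon$, generating the auxiliary maximal subsolutions $v_\epsilon$ on the left and $w_\epsilon$ on the right. A maximality/monotonicity argument shows $v_\epsilon(1-\epsilon, \cdot) \to F_\ga[u](1, \cdot)$ as $\epsilon \to 0$, while state--constraint comparison on the thin strip, combined with the Lipschitz estimates of Remark~\ref{fabro} (after a preliminary Lipschitz approximation of $u$ via the sup/inf--convolutions of Lemma~\ref{hopf}), yields $w_\epsilon(1-\epsilon, \cdot) \to u(x, \cdot)$. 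The strict subtangent condition $\phi'(t_0) < c_x$ provides the slack needed to construct, from $F_\ga[u]$ (suitably regularized near the vertex), a $C^1$ subtangent to $u \circ \ga$ at $(1,t_0)$ whose time and space derivatives make $\varphi_t(1,t_0) + H_\ga(1, \varphi'(1,t_0)) < 0$, contradicting the vertex state--constraint inequality.

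The principal obstacle is exactly this final construction: translating the asymptotic information produced by Proposition~\ref{xanax} and the state--constraint behavior of the maximal subsolution $F_\ga[u]$ near $(1,t_0)$ into a genuine $C^1$ subtangent (constrained to the closed rectangle) with the sharp slope condition needed to violate Definition~\ref{defsupersol}(ii). The regularity of $F_\ga[u]$ at the right--hand boundary is delicate, and the flux--limiter bound $c_x \leq c_\ga$ must be used in a decisive way to guarantee enough room between the slope of the test function and the state--constraint threshold.
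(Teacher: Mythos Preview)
Your reduction via Proposition~\ref{ginetto} is exactly right and matches the paper, as does the conclusion that everything hinges on showing $u(x,t_0)\geq F_x[u](t_0)$ whenever $u(x,\cdot)$ admits a $C^1$ subtangent at $t_0$ with slope $<c_x$.

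The genuine gap is the construction of the bad constrained subtangent. You propose to manufacture it from asymptotic information about $F_\ga[u]$ and the maximal subsolutions $v_\eps,w_\eps$ as $\eps\to 0$, and you rightly flag this as the principal obstacle. The paper avoids this obstacle entirely by writing down the test function \emph{explicitly}. With $a$ chosen so that $0>c_x>a>\frac d{dt}\varphi(t_0)$, the flux--limiter inequality $c_x\leq c_\ga=-\max_s\min_p H_\ga(s,p)$ gives $-a>\min_p H_\ga(1,p)$, so one can pick $p_0$ with $a+H_\ga(1,p_0)<0$. Then
\[
\phi(s,t)=\varphi(t)+p_0\,(s-1)
\]
is the candidate subtangent: it is $C^1$, satisfies $\phi_t(1,t_0)+H_\ga(1,\phi'(1,t_0))<0$ by construction, and touches $u\circ\ga$ at $(1,t_0)$. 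This is the missing idea in your proposal; once you have it, no limiting procedure or regularity analysis of $F_\ga[u]$ at the boundary is needed.

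Proposition~\ref{xanax} is then used, but differently from what you sketch: it is applied at a \emph{fixed} splitting point $s_0=1-\de$ (not in a limit $\eps\to 0$) to show $\phi\leq u\circ\ga$ on the left edge $\{1-\de\}\times[t_0-\de,t_0+\de]$ of the thin box $B$. Since $\phi\leq u\circ\ga$ on the rest of $\partial_p B$ by the strict subtangency of $\varphi$ and continuity, comparison (Theorem~\ref{nobarles}) gives $\phi\leq u\circ\ga$ throughout $\ov B$, so $\phi$ is a constrained subtangent at $(1,t_0)$. Finally, a small logical point: the paper does not first extract the arc $\ga$ from Definition~\ref{defsupersol}(ii) and then argue on it; rather it assumes $F_x[u](t_0)>u(x,t_0)$, which forces $F_\ga[u](1,t_0)>u(x,t_0)$ for \emph{every} $\ga\in\Gamma_x$, and then builds the bad subtangent $\phi$ for each such $\ga$. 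This contradicts the existence of any arc satisfying the state--constraint clause. Your contrapositive route is logically equivalent, but the universal version makes the role of the assumption $F_x[u](t_0)>u(x,t_0)$ more transparent.
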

\begin{proof}
We fix a vertex $x$ and an arc $\ga \in \Gamma_x$.
 According to Proposition \ref{ginetto},  to prove that
 \[u(x,t) \geq G[F_x[u],c_x](t) \Txt{for any $t \geq 0$}\]
 it is enough  showing  that  for any $t_0 \in [T_0,T_1)$ where there is a   $C^1$ subtangent $\varphi$ to $u(x,\cdot)$ with  $\frac d{dt} \varphi(t_0) < c_x$, one has
\begin{equation}\label{funda100}
u(x,t_0) \geq   F_x[u](t_0).
   \end{equation}

\smallskip

\noindent{\bf Step 1.} \; We fix $a$ with
\begin{equation}\label{funda0}
 0 > c_x > a > \frac d{dt} \varphi(t_0)
\end{equation}
and $p_0 \in \R$  such that
\begin{equation}\label{funda1}
a + H_\ga(1,p_0) < 0,
\end{equation}
which implies
\begin{equation}\label{funda4}
\frac d{dt} \varphi(t) + H_\ga(s,p_0) < 0  \Txt{for $(s,t) \in \ov Q$ close to $(1,t_0)$}
\end{equation}
We can assume without loosing generality that $\varphi$ is a strict subtangent such that  $u(x,t_0)= \varphi (t_0)$ and
\begin{equation}\label{funda21}
 u(x, t) >  \varphi(t) \Txt{for $t \neq t_0$, $t$ close to $t_0$.}
\end{equation}
We can therefore determine $\de >0$ such that $t_0 + \de < T_1$ and  both \eqref{funda4}, \eqref{funda21} hold true for $(s,t) \in [1- \de,1] \times  [t_0- \de, t_0 + \de]$; in addition, since
\[s \mapsto u(\ga(s), t_0 - \de) -   \varphi(t_0 -\de) - p_0 \, (s-1) \]
is continuous and positive at $s=1$  by \eqref{funda21}, we can also  get
\begin{equation}\label{funda211}
 u(\ga(s), t_0 - \de) >   \varphi(t_0 - \de) + p_0 \, (s-1) \Txt{for $s \in [1-\de,1]$.}
\end{equation}
If we assume  by contradiction  that
\begin{equation}\label{funda5}
   F_x[u](t_0) > u(x,t_0)
\end{equation}
then  we further have, up to shrinking $\de$
\begin{equation}\label{funda5bis}
   F_\ga[u](s,t) > u(\ga(s), t) \Txt{for  $(s,t) \in [1- \de,1]
\times [t_0- \de, t_0 + \de] $.}
\end{equation}

\smallskip
\noindent {\bf Step 2} \;  We claim that the function
\begin{equation}\label{funda6}
    \phi(s,t) = \varphi (t)+ p_0 \, (s- 1) .
\end{equation}
is a  subtangent, constrained to $\ov Q$,  to $u \circ \ga$ at $(1,t_0)$.
By applying Proposition \ref{xanax} to $s_0= 1 -\de$,  $c=c_x$, the supersolution $u \circ \ga $, the sets
\[A= (0,1-\de)\times (t_0-\de,t_0+\de) \Txt{and} \qquad B = (1-\de,1)\times (t_0 - \de,t_0+\de),\]
the maximal subsolutions, denoted by $v$, $w$ to \eqref{HJg}, with trace  less than or equal to $u \circ \ga $ on $\partial^-_p A$, $\partial^+_p B$, respectively,    we derive
\begin{equation}\label{funda1000}
  G[u(\ga(1-\de),\cdot), c_x](t) \geq \min \{G[v(1-\de,\cdot),c_x],G[w(1-\de,\cdot),c_x]\}(t)
\end{equation}
for any $t \in [t_0-\de,t_0+\de]$. We further deduce from \eqref{funda5bis} and the relation $v \geq  F_\ga[u]$ in $A$
 \[ v(s,t) > u(\ga(s),t) \Txt{for $(s,t) \in \{1-\de\} \times [t_0 -\de,t_0 + \de]$}\]
 and   consequently by Lemma \ref{provvi}
 \[ G[v(1-\de,\cdot),c_x] (t)> G[u(\ga(1-\de),\cdot), c_x](t) \txt{for $t \in [t_0-\de,t_0 + \de]$} .\]
We then have by \eqref{funda1000}
\begin{equation}\label{funda5000}
 G[u(\ga(1-\de),\cdot), c_x](t)\geq G[w(1-\de,\cdot),c_x](t) \Txt{ for $t \in [t_0- \de,t_0+ \de]$.}
\end{equation}
We know  by \eqref{funda21}, \eqref{funda211} that $\phi \leq u \circ \ga$ on $\partial_p^+ B$,  we further know by \eqref{funda4} that  it is a subsolution of \eqref{HJg} in $B$.  We derive taking into account the maximality property of $w$
\[\phi(1-\de,t) =G[\phi(1-\de,\cdot),c_x] \leq w(1-\de,t) \Txt{for $t \in [t_0-\de,t_0 + \de]$}\]
which implies
\begin{eqnarray*}
  \phi(1-\de,t) &=& G[\phi(1-\de,\cdot),c_x] \leq G[w(1-\de,\cdot),c_x](t) \\
  &\leq& G[u(\ga(1-\de),\cdot), c_x](t) \leq u(\ga(1-\de),t)
\end{eqnarray*}
for $t \in [t_0-\de,t_0 + \de]$. Summing up, $u\circ \ga \geq \phi$ on the whole of the parabolic boundary of $B$. We derive from the comparison principle  given in Theorem \ref{nobarles} that
\[ u(\ga(s),t) \geq \phi(s,t) \Txt{for $(s,t) \in \ov B$}\]
Taking into account that  $u= \phi$ at $(1,t_0)$, we conclude that  the function $\phi$ is a  subtangent, constrained  to $\ov Q$,  to $u \circ \ga$ at $(1,t_0)$, as was claimed.

\smallskip

\noindent {\bf Step 3} \;  We therefore reach a contradiction with $u$ being supersolution and $\frac d{dt} \varphi(t_0) < c_x$, since $\ga$ is an arbitrary  arc in $\Gamma_x$ and  by \eqref{funda1}
\[ \phi_t(1,t_0) + H_\ga(1,\phi'(1,t_0))  < 0.\]
This shows  \eqref{funda100} and ends   the proof.
\end{proof}

\smallskip

We recall that
\[  R= (0,1) \times (T_0,T_1).\]

\begin{Proposition}\label{fundasol} Let $u:\Gamma \times [T_0,T_1) \to \R$ be an uniformly continuous function such that  $u \circ \ga$ is solution to \eqref{HJg} in $(0,1) \times (T_0,T_1)$ for any arc $\ga$, and the trace of $u$ on $(\Gamma \times \{0\}) \cup (\VV \times [T_0,T_1))$ solves \eqref{discr} in $(T_0,T_1)$, then $u$ is solution of (HJ$\Gamma$) in $(T_0,T_1)$.
\end{Proposition}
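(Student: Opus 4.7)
The plan is to check separately the subsolution clause (ii) and supersolution clause (ii) of the definition of solution to (HJ$\Gamma$), since the corresponding clauses (i) are part of the hypotheses. The three levers I will pull are: the pointwise identity $u(x,t) = G[F_x[u], c_x](t)$ on $\VV \times (T_0, T_1)$ coming from \eqref{discr}; the characterizations of $G[\cdot, \cdot]$ through tangents (Lemma \ref{proprovvi} and Proposition \ref{overpfb2}); and Proposition \ref{2021}, which encodes the state--constraint inequality enjoyed by the maximal subsolution $F_\ga[u]$ on the side of the parabolic boundary where the datum is not prescribed.

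For the subsolution clause (ii), pick any vertex $x$, any $t_0 \in (T_0, T_1)$, and any $C^1$ supertangent $\psi$ to $u(x,\cdot)$ at $t_0$. Because the trace of $u$ solves \eqref{discr}, the function $u(x, \cdot)$ coincides with $G[F_x[u], c_x]$, so $\psi$ is a supertangent to $G[F_x[u], c_x]$ at $t_0$. Lemma \ref{proprovvi} then immediately gives $\frac{d}{dt}\psi(t_0) \leq c_x$, as required.

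For the supersolution clause (ii), fix $x$, $t_0 \in (T_0, T_1)$, and a $C^1$ subtangent $\phi$ to $u(x, \cdot)$ at $t_0$ with $\frac{d}{dt}\phi(t_0) < c_x$. The same identity $u(x, \cdot) = G[F_x[u], c_x]$ makes $\phi$ a subtangent to $G[F_x[u], c_x]$ at $t_0$, and Proposition \ref{overpfb2} forces $u(x, t_0) = F_x[u](t_0)$. Since $\Gamma_x$ is finite, select $\ga^* \in \Gamma_x$ achieving the minimum in the definition of $F_x$, i.e.\ $F_x[u](t_0) = F_{\ga^*}[u](1, t_0)$. By hypothesis $u \circ \ga^*$ is a uniformly continuous subsolution of (HJ$_{\ga^*}$) in $R$ whose trace on $\partial_p^- R$ is precisely the merge of $u \circ \ga^*$ and $u(\ga^*(0), \cdot)$ used to define $F_{\ga^*}[u]$. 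Hence $u \circ \ga^*$ competes in the supremum defining $F_{\ga^*}[u]$ (Proposition \ref{maxistaruni}), and we obtain $u \circ \ga^* \leq F_{\ga^*}[u]$ on $\ov R$, with equality at $(1, t_0)$.

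Now let $\varphi$ be any $C^1$ subtangent, constrained to $[0,1] \times \ov{(T_0, T_1)}$, to $u \circ \ga^*$ at $(1, t_0)$. The inequality $F_{\ga^*}[u] \geq u \circ \ga^*$ combined with the equality at $(1, t_0)$ promotes $\varphi$ to a $C^1$ subtangent, constrained to $\ov R$, to $F_{\ga^*}[u]$ at $(1, t_0)$. Since $(1, t_0) \in \partial_p R \setminus \partial_p^- R$, Proposition \ref{2021} yields
\[
\varphi_t(1, t_0) + H_{\ga^*}(1, \varphi'(1, t_0)) \geq 0,
\]
which is exactly condition (ii) in Definition \ref{defsupersol}. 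I expect the genuinely delicate point to be the transfer of the subtangent from $u \circ \ga^*$ to $F_{\ga^*}[u]$ at $(1, t_0)$; this step works only because $u \circ \ga^* \leq F_{\ga^*}[u]$ with contact at that point, which in turn depends crucially on Proposition \ref{overpfb2} and on the maximality built into the definition of $F_{\ga^*}$.
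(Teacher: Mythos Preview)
Your proof is correct and follows essentially the same approach as the paper's: both use Proposition \ref{overpfb2} to obtain $u(x_0,t_0)=F_{x_0}[u](t_0)$, select an arc realizing the minimum, and then transfer the constrained subtangent from $u\circ\ga$ to $F_\ga[u]$ via the inequality $u\circ\ga\leq F_\ga[u]$ with contact at $(1,t_0)$. The only cosmetic difference is that you invoke Proposition \ref{2021} for the final state--constraint inequality, whereas the paper carries out the Perron--Ishii argument directly; you also spell out the subsolution clause (ii) via Lemma \ref{proprovvi}, which the paper leaves implicit.
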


 \begin{proof}   It is clear from the assumptions that $(x,t) \mapsto u(x,t)$ is subsolution to (HJ$\Gamma$) in $[T_0,T_1)$.  We fix  $x_0 \in \VV$.  It is left checking condition {\bf (ii)} in the definition of supersolution. Let  $t_0 \in (T_0,T_1)$, we select $\ga \in \Gamma_{x_0}$  with
\[ F_\ga[u](1,t_0)=F_{x_0}[u](t_0).\]
Let $\psi$  be a  $C^1$  subtangent to $u(x_0,\cdot)$ at $t_0$  with $\partial_t\psi(t_0) <  c_{x_0}$, therefore by Proposition \ref{overpfb2}
 \[u(x_0,t_0)= F_{x_0}[u](t_0)=F_\ga[u](1,t_0).\]
 In addition
 \[u(\ga(s),t) \leq F_\ga[u](s,t) \Txt{ for $(s,t)  \in \ov R$}\]
 so that   any   subtangent $\phi$, constrained to $\ov R$ , to $u$ at $(1,t_0)$ is also subtangent  to $F_\ga[u]$.
 If we now  assume by contradiction that
\[\phi_t(1,t_0) + H_\ga(1, \phi'(1,t_0)) <  0\]
we can construct via Perron--Ishii method a subsolution to \eqref{HJg}  strictly greater than $F_\ga[u](s,t)$  in a neighborhood of $(1,t_0)$ intersected with $\ov R$.   This contradicts the maximality of $F_\ga[u]$,  and concludes  the proof.
 \end{proof}

\smallskip

The above argument also allows  pointing out a property of solutions we will use to prove  some stability results.

\smallskip
\begin{Corollary}\label{happy} Assume  $u$ to  be an uniformly   continuous solution of (HJ$\Gamma$) in $(T_0,T_1)$, and consider  $x_0 \in \VV$. Assume that   there is a  $C^1$   subtangent $\varphi$ to $u(x_0,\cdot)$ at $t_0$, for some $t_0 \in (T_0,T_1)$  with $ \frac d{dt} \varphi (t_0) < c_{x_0}$. Then  there exists $\ga \in \Gamma_{x_0}$ such that $u(x_0,t_0)$ coincides with the maximal subsolution of \eqref{HJg} in $R$, with trace $u\circ \ga$ on $[0,1] \times \{T_0\}$  and $u(\ga(0),t)$ on $\{0\} \times [T_0,T_1)$, computed at $(1,t_0)$.
\end{Corollary}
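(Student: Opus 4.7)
The plan is to combine the translation of (HJ$\Gamma$) into the semidiscrete problem (Propositions \ref{fundasub} and \ref{fundasuper}) with the structural property of the operator $G$ recorded in Proposition \ref{overpfb2}: whenever $G[\psi,a]$ admits a $C^1$ subtangent of slope $<a$ at some time, the operator $G$ is ``inactive'' there, in the sense that $G[\psi,a]$ equals $\psi$ at that time.

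First, because $u$ is an uniformly continuous solution of (HJ$\Gamma$), Propositions \ref{fundasub} and \ref{fundasuper} together imply that the trace of $u$ on $(\Gamma \times \{T_0\}) \cup (\VV \times [T_0,T_1))$ is a uniformly continuous solution of the semidiscrete equation \eqref{discr}. In particular
\[ u(x_0, t) \;=\; G[F_{x_0}[u],c_{x_0}](t) \qquad\hbox{for every } t \in [T_0,T_1). \]

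Next, recall from Remark \ref{stufa} that the flux limiter $c_{x_0}$ is negative, and note that $F_{x_0}[u] \in \UC([T_0,T_1))$ by construction. The hypothesis provides a $C^1$ subtangent $\varphi$ to $u(x_0,\cdot)=G[F_{x_0}[u],c_{x_0}]$ at $t_0$ with $\frac{d}{dt}\varphi(t_0) < c_{x_0}$. Applying Proposition \ref{overpfb2} with $\psi = F_{x_0}[u]$ and $a = c_{x_0}$ therefore yields
\[ u(x_0,t_0) \;=\; G[F_{x_0}[u],c_{x_0}](t_0) \;=\; F_{x_0}[u](t_0). \]

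Finally, since $\Gamma_{x_0}$ is finite, the minimum defining
\[ F_{x_0}[u](t_0) \;=\; \min_{\ga\in\Gamma_{x_0}} F_\ga[u](1,t_0) \]
is attained at some arc $\ga \in \Gamma_{x_0}$. For this $\ga$, by the very definition of the operator $F_\ga$, the quantity $F_\ga[u](1,t_0)$ is the value at $(1,t_0)$ of the maximal uniformly continuous subsolution to \eqref{HJg} in $R$ whose trace is bounded above by the merge of $u\circ\ga$ on $[0,1]\times\{T_0\}$ and $u(\ga(0),\cdot)$ on $\{0\}\times[T_0,T_1)$. This is exactly the content of the statement, so the proof is complete. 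The argument is essentially a rearrangement of results already established, and no substantive obstacle is expected.
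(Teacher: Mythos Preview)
Your proof is correct and follows essentially the same route as the paper: the paper argues ``as in Proposition \ref{fundasol}'' to obtain $u(x_0,t_0)=F_{x_0}[u](t_0)$ via Proposition \ref{overpfb2}, and then picks an arc $\ga\in\Gamma_{x_0}$ realizing the minimum in the definition of $F_{x_0}[u]$. You simply make explicit the preliminary step---invoking Propositions \ref{fundasub} and \ref{fundasuper} to know that the trace of $u$ solves \eqref{discr}---which the paper leaves implicit.
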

\begin{proof} We derive from the assumptions, arguing as in  Proposition \ref{fundasol}, that
\[ u(x_0,t_0)= F_{x_0}[u](t_0).\]
To conclude the proof it is then enough to take $\ga \in \Gamma_{x_0}$ with
\[F_{x_0}[u](t_0)= F_\ga[u](1,t_0)\]
and recall the definition of $F_\ga[u]$.

\end{proof}

\smallskip

We further show that Proposition \ref{melaton} can be extended to (sub)solutions of (HJ$\Gamma$).

\smallskip

\begin{Corollary}\label{superhappy}  Let $t^* \in (T_0,T_1)$. Let $u$ be an uniformly  continuous function from $\Gamma \times [T_0,T_1)$ to $\R$. Assume $u$ to be (sub)solution of (HJ$\Gamma$) in $ (T_0,t^*)$ and in $(t^*,T_1)$. Then $u$ is (sub)solution in $(T_0,T_1)$ as well.
\end{Corollary}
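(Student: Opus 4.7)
The plan is to treat the subsolution and solution cases separately. For the subsolution case, condition (i) on arcs follows arc-by-arc from Proposition \ref{melaton} applied to $u\circ\gamma$. For the vertex condition (ii) at each $x\in\VV$, I would observe that by Lemma \ref{decresce} it is equivalent to the monotonicity of $t\mapsto u(x,t)-c_x\,t$. Since this map is nonincreasing on $[T_0,t^*]$ and on $[t^*,T_1)$ by hypothesis (extending to the endpoint $t^*$ by continuity of $u$), concatenation gives nonincreasingness on $[T_0,T_1)$, which is precisely condition (ii) on the full interval.

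For the solution case the subsolution half is handled as above, and the supersolution arc condition is again Proposition \ref{melaton}. The vertex condition (ii) is immediate at any $t_0\neq t^*$, so the heart of the proof is to verify it at $t_0=t^*$. Given a $C^1$ subtangent $\phi$ to $u(x,\cdot)$ at $t^*$ with $\phi_t(t^*)<c_x$, I would first produce a sequence $t_n\uparrow t^*$ in $(T_0,t^*)$ such that each $u(x,\cdot)$ admits a $C^1$ subtangent of derivative strictly less than $c_x$ at $t_n$. If no such sequence existed, then in some left neighborhood of $t^*$ every subtangent to $u(x,\cdot)$ would have derivative $\geq c_x$; combining Lemma \ref{decrescebis} with the monotonicity from the subsolution half would force $u(x,t)=u(x,t^*)+c_x(t-t^*)$ in that left neighborhood, and a Taylor comparison with $\phi$ then yields $\phi_t(t^*)\geq c_x$, contradicting the hypothesis.

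For each such $t_n$, Corollary \ref{happy}, applied to the solution $u$ on $(T_0,t^*)$, produces $\gamma_n\in\Gamma_x$ so that $u(x,t_n)$ coincides with $w_n(1,t_n)$, where $w_n$ is the maximal subsolution of \eqref{HJg} on the lower rectangle $(0,1)\times(T_0,t^*)$ with trace inherited from $u$ on $\partial_p^-$. By finiteness of $\Gamma_x$, a subsequence has $\gamma_n=\gamma$ fixed; the uniform continuity of the maximal subsolution (Proposition \ref{maxistaruni}) then gives, in the limit, $u(x,t^*)=w(1,t^*)$ where $w$ is the common maximal subsolution. Moreover, since $u\circ\gamma$ is itself a subsolution of \eqref{HJg} on the lower rectangle whose trace on $\partial_p^-$ agrees with $u$, maximality gives $u\circ\gamma\leq w$ on the closed lower rectangle, with equality at $(1,t^*)$.

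Consequently, any $C^1$ constrained subtangent $\varphi$ to $u\circ\gamma$ at $(1,t^*)$ is also a constrained subtangent to $w$ at $(1,t^*)$ on the closed lower rectangle, because $w-\varphi=(w-u\circ\gamma)+(u\circ\gamma-\varphi)$ is the sum of two nonnegative functions vanishing at $(1,t^*)$. Since $w$ is the maximal subsolution with datum on $\partial_p^-$, Proposition \ref{2021} yields the state-constraint inequality $\varphi_t+H_\gamma(1,\varphi')\geq 0$ at every point of the open right side $\{1\}\times(T_0,t^*)$; a standard viscosity perturbation argument (perturbing $\varphi$ quadratically to land at an interior or right-side point, applying the inequality there, and passing to the limit) transfers this to the corner $(1,t^*)$, thereby verifying condition (ii) at $t^*$. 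The main obstacle is this two-fold technical step: producing the subtangent sequence at $t_n\uparrow t^*$, and then extending the state-constraint inequality from the open right side to the corner $(1,t^*)$.
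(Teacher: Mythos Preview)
Your overall strategy coincides with the paper's. The subsolution case is handled correctly (you give more detail via the monotonicity characterization, where the paper simply calls it immediate). For the supersolution case at $t_0=t^*$ you follow the same route as the paper: produce $t_n\uparrow t^*$ with sub-$c_x$ subtangents via Lemma \ref{decrescebis}, apply Corollary \ref{happy}, extract a fixed arc $\gamma\in\Gamma_x$, and identify $u(x,t^*)=w(1,t^*)$ with $w$ the maximal subsolution on the lower rectangle; the observation $u\circ\gamma\le w$ with equality at $(1,t^*)$, so that constrained subtangents transfer, is exactly what the paper uses.

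The gap is in the very last step. You invoke Proposition \ref{2021}, which delivers the state-constraint inequality only on the \emph{open} right side $\{1\}\times(T_0,t^*)$, and then appeal to a ``standard viscosity perturbation argument'' to reach the corner $(1,t^*)$. This transfer is not standard: $(1,t^*)$ is a genuine corner of the closed lower rectangle, lying neither on the parabolic boundary nor in the interior. A linear or quadratic perturbation of $\varphi$ need not push the new contact point off $(1,t^*)$ into the interior or onto the open right side; it may stay at $(1,t^*)$, or slide along the top segment $[0,1]\times\{t^*\}$, where neither the interior supersolution property nor Proposition \ref{2021} says anything. So the perturbation scheme, as stated, does not close.

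The paper avoids this entirely by arguing directly from the \emph{maximality} of $w$, not from its Proposition \ref{2021} consequence. Assuming, for contradiction, a constrained subtangent $\phi$ at $(1,t^*)$ with $\phi_t+H_\gamma(1,\phi')<0$, one notes $\phi$ is a strict subsolution near $(1,t^*)$ and performs a Perron--Ishii bump: $\max(w,\phi+\delta)$ for small $\delta$ glues to a subsolution with the same $\partial_p^-$-trace but strictly exceeding $w$ near $(1,t^*)$, contradicting maximality. This is precisely the mechanism underlying Proposition \ref{2021}, applied now at the corner; it uses maximality itself rather than the open-side inequality it implies, and needs no perturbation of the test function. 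Replacing your final paragraph with this direct Perron--Ishii step closes the argument.
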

\begin{proof} We  set $S= (0,1) \times (T_0,t^*)$. Taking into account Proposition \ref{melaton}, the assertion is immediate in the case of subsolutions. In the case of solutions,  we need just  checking property {\bf (ii)} in the definition of supersolution at $t^*$. Given $x \in \VV$, we therefore assume that there is  a  $C^1$  subtangent $\psi$  to $u(x,\cdot)$  at  $t^*$ with
\begin{equation}\label{super1}
  \frac d{dt} \psi(t^*)< c_x.
\end{equation}
 We claim  that there exists a sequence $t_n < t^*$, $t_n \to t$ and $C^1$ subtangents $\psi_n$ to $u(x,\cdot)$ at $t_n$ with
 \begin{equation}\label{happy2}
 \frac d{dt} \psi_n(t_n)< c_x.
 \end{equation}
If on the contrary, one can find an interval $( t^*-\de,t^*)$, for some $\de >0$, such that
\[  \frac d{dt} \varphi(t) \geq c_x\]
for any $t \in (t^*-\de,t^*)$, any $C^1$ subtangent $\psi$ to $u(x,\cdot)$ at $t$, then, according to Lemma \ref{decrescebis}
\[u(\cdot,t) - c_x \, (t -t^*)\]
is nondecreasing in $[t^*-\de, t^*]$. In contrast with \eqref{super1}. By applying  Corollary \ref{happy} to the $t_n$'s satisfying \eqref{happy2}, we find  a sequence $\ga_n$ of arcs in $\Gamma_x$ such that   $u(x,t_n)$ coincides with the maximal subsolution of \eqref{HJg}, with $\ga$ replaced by $\ga_n$,  in $S$   with trace equal to $u\circ \ga_n$ on $[0,1] \times \{T_0\}$  and $u(\ga_n(0),t)$ on $\{0\} \times [T_0,t^*)$, computed at $t_n$. The sequence $\ga_n$ be must definitively constant, equal, say, to $\ga \in \Gamma_x$. Accordingly, we have that
\[u(x,t^*)= \lim_n w(1,t_n),\]
where we have denoted by $w(s,t)$ the maximal subsolution to \eqref{HJg} in $S$ with trace $u \circ \ga$ on $\partial_p^- S$. Now assume for purposes of contradiction that there is a $C^1$  subtangent $\phi$, constrained to $[0,1] \times [T_0,T_1)$,  to $u \circ \ga$ at $(1,t^*)$,  with
\begin{equation}\label{super3}
  \phi_t(1,t^*) + H_\ga(1,\phi'(1,t^*)) <0.
\end{equation}
We can further assume that
\[ \phi(1,t^*)= u(x,t^*) \txt{and} \quad \phi < u\circ \ga \txt{in $U \cap \big ([0,1] \times [T_0,T_1) \big ) \setminus \{(1,t^*)\}$,}\]
where $U$ is a suitable neighborhood of $(1,t^*)$ in $\R^2$. Exploiting the maximality of $w$, we deduce that
\[ w > \phi \Txt{ in $\big (U \cap S\big ) \setminus \{(1,t^*)\}$,}\]
so that using \eqref{super3} we can construct via Perron--Ishii method a subsolution to \eqref{HJg} agreeing with $w$ in $\partial_p^- S$ and strictly greater to $w$ at points of $S$ suitably close to $(1,t^*)$.  We have therefore reached a contradiction, which shows that \eqref{super3} is not possible. This ends the proof.
\end{proof}

\bigskip

\section{Well posedness of {\bf HJ$\Gamma$}}\label{well}

As already clarified in the Introduction and Section \ref{defsol}, the well posedness is relative to the time dependent Hamilton--Jacobi equation coupled with a continuous initial datum plus a flux limiter  at the vertices.\\

\subsection{Comparison result}

We start  with a comparison result based on Theorem \ref{unicum} and the links established between Hamilton--Jacobi equation and semidiscrete problem.

\smallskip

\begin{Theorem}\label{unicumque} Let $u$, $v$ be uniformly continuous sub and   supersolution  of (HJ$\Gamma$) respectively, in  $(T_0,T_1)$ with $u(\cdot,T_0) \leq v(\cdot,T_0)$ in $[0,1]$,  then $u \leq v$ in $\Gamma \times [T_0,T_1)$.
\end{Theorem}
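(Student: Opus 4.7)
The plan is to reduce the comparison principle on the network to two comparison principles already established: the semidiscrete comparison of Theorem \ref{unicum}, which handles the vertices, and the single-interval comparison of Theorem \ref{nobarles}, which handles the interior of each arc.

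First I would pass to the traces. Let $\tilde u$ and $\tilde v$ denote the restrictions of $u$ and $v$ to $(\Gamma\times\{T_0\})\cup(\VV\times[T_0,T_1))$. By Proposition \ref{fundasub}, $\tilde u$ is a uniformly continuous subsolution of \eqref{discr} in $(T_0,T_1)$, and by Proposition \ref{fundasuper}, $\tilde v$ is a uniformly continuous supersolution of \eqref{discr} in $(T_0,T_1)$. The hypothesis $u(\cdot,T_0)\leq v(\cdot,T_0)$ on $\Gamma$ gives in particular $\tilde u(x,T_0)\leq \tilde v(x,T_0)$ for every $x\in\VV$. Theorem \ref{unicum} then yields
\[
u(x,t)=\tilde u(x,t)\leq \tilde v(x,t)=v(x,t)\qquad\text{for all }(x,t)\in\VV\times[T_0,T_1).
\]

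Next I would upgrade this vertex inequality to an inequality on the whole of $\Gamma\times[T_0,T_1)$ by working arc by arc. Fix an arc $\ga$, with endpoints $\ga(0),\ga(1)\in\VV$, and consider $u\circ\ga$, $v\circ\ga$ on $R=(0,1)\times(T_0,T_1)$. By condition \textbf{(i)} in Definitions \ref{defsupersol} and \ref{defsubsol}, $u\circ\ga$ is a viscosity subsolution of \eqref{HJg} in $R$ and $v\circ\ga$ is a viscosity supersolution. Both functions are uniformly continuous on $\ov R$, since $u$ and $v$ are uniformly continuous on $\Gamma\times[T_0,T_1)$ and $\ga$ is a regular parametrization. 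On the parabolic boundary $\partial_p R$ we have $u\circ\ga\leq v\circ\ga$: on $[0,1]\times\{T_0\}$ this follows from the assumption on the initial datum, while on $\{0\}\times[T_0,T_1)$ and $\{1\}\times[T_0,T_1)$ it is exactly the vertex inequality just established. Applying Theorem \ref{nobarles} to $u\circ\ga$ and $v\circ\ga$ on $R$ gives $u\circ\ga\leq v\circ\ga$ on the whole of $\ov R$. Since this holds for every arc $\ga$ and since $\Gamma=\bigcup_\ga \ga([0,1])$, we conclude $u\leq v$ on $\Gamma\times[T_0,T_1)$.

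The entire conceptual difficulty has been absorbed into Propositions \ref{fundasub}, \ref{fundasuper} and Theorem \ref{unicum}, so the main obstacle at this stage is merely verifying the matching of boundary data on each $\partial_p R$; this is immediate once the semidiscrete comparison is in hand. The only mild subtlety is that Theorem \ref{nobarles} requires uniform continuity of the subsolution, which is available here by hypothesis, so no sup-convolution regularization needs to be reintroduced inside the arcs. No doubling of variables is used anywhere in the argument, consistently with the approach advertised in the introduction.
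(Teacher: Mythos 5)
Your argument is correct and follows exactly the same route as the paper: pass to the traces via Propositions \ref{fundasub} and \ref{fundasuper}, apply the semidiscrete comparison of Theorem \ref{unicum} to get the inequality on $\VV\times[T_0,T_1)$, and then conclude arc by arc with Theorem \ref{nobarles}. Your write-up merely makes explicit the verification of the boundary data on each $\partial_p R$, which the paper leaves implicit.
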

\begin{proof}  The trace of $u$ and $v$ on $ (\Gamma \times \{T_0\}) \cup (\VV \times [T_0,T_1))$ are subsolution and supersolution to \eqref{discr} by Propositions \ref{fundasub}, \ref{fundasuper}, respectively.  We then invoke Theorem \ref{unicum} to get
\[ u \leq v \Txt{in $(\Gamma \times \{T_0\}) \cup (\VV \times \R^+)$}.\]
We apply the comparison result in Theorem \ref{nobarles}  to finally obtain
\[ u \circ \ga \leq v \circ \ga   \Txt{in $[0,1] \times [T_0,T_1)$, for any $\ga$.}\]
This proves the assertion.
\end{proof}

We derive :

\begin{Corollary}\label{uniqlo} Let $g$ be a continuous  datum on $\Gamma$, there exists at most one uniformly continuous solution to  (HJ$\Gamma$) in $(T_0,T_1)$ agreeing with $g$ at $t=T_0$.
\end{Corollary}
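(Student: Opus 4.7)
The plan is to derive this uniqueness result as an immediate consequence of the comparison principle just established in Theorem \ref{unicumque}. Let $u_1$ and $u_2$ be two uniformly continuous solutions of (HJ$\Gamma$) on $(T_0,T_1)$ with $u_1(\cdot,T_0) = u_2(\cdot,T_0) = g$ on $\Gamma$.

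First I would view $u_1$ as a uniformly continuous subsolution and $u_2$ as a uniformly continuous supersolution (both are legitimate, since a solution is simultaneously a sub- and a supersolution by definition). Since the initial traces coincide, in particular $u_1(\cdot,T_0) \leq u_2(\cdot,T_0)$ on $\Gamma$, and the hypotheses of Theorem \ref{unicumque} are met. Applying it yields
\[ u_1 \leq u_2 \Txt{in $\Gamma \times [T_0,T_1)$.} \]

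Then I would swap the roles: regard $u_2$ as subsolution and $u_1$ as supersolution. Again the initial traces agree, so Theorem \ref{unicumque} gives the opposite inequality $u_2 \leq u_1$ on $\Gamma \times [T_0,T_1)$. Combining the two inequalities shows $u_1 = u_2$, which is the desired uniqueness. There is no real obstacle here, as the entire content is packed into Theorem \ref{unicumque}; the corollary is purely a bookkeeping consequence and the proof is essentially one line.
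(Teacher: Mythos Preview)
Your argument is correct and matches the paper's intent: the corollary is stated immediately after Theorem \ref{unicumque} with no separate proof, precisely because it follows by the standard ``apply comparison twice, swapping roles'' step you wrote out.
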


We further derive a first stability result.

\begin{Corollary}\label{corunicumque} Let $g_n$ a sequence of continuous functions in $\Gamma$ uniformly converging to a function $g$. Assume that there exist (unique) uniformly continuous solutions   $u_n$  of  (HJ$\Gamma$) in $(T_0,T_1)$ with initial data $g_n$  at $t=T_0$ and flux limiter $c_x$. Then  the $u_n$'s are uniformly convergent in $\Gamma \times [T_0,T_1)$ to a solution  $u$  of  (HJ$\Gamma$) in $(T_0,T_1)$ agreeing with $g$ at $t=T_0$, and
\[|u_n-u|_\infty \leq  |g_n-g|_\infty  \Txt{for every $n$.} \]
\end{Corollary}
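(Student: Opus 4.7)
The plan is to use the comparison principle of Theorem \ref{unicumque} to show the sequence $\{u_n\}$ is uniformly Cauchy, then to identify its uniform limit as a solution via the semidiscrete formulation, which is the only place where the argument is delicate.

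Equation \eqref{HJg} is translation invariant in $u$, and both conditions {\bf (ii)} in Definitions \ref{defsupersol}, \ref{defsubsol} only involve $\frac{d}{dt}$ of the test functions, so if $v$ is a solution of (HJ$\Gamma$) then so is $v + \alpha$ for any $\alpha \in \R$. In particular $u_m + |g_n - g_m|_\infty$ is a solution whose initial datum dominates $g_n$, hence Theorem \ref{unicumque} gives $u_n \leq u_m + |g_n - g_m|_\infty$ and, by symmetry,
\[
|u_n - u_m|_\infty \leq |g_n - g_m|_\infty.
\]
Since $\{g_n\}$ is uniformly Cauchy, so is $\{u_n\}$, and it converges uniformly on $\Gamma \times [T_0,T_1)$ to a function $u$. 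As a uniform limit of uniformly continuous functions, $u$ is uniformly continuous; passing to $t = T_0$ yields $u(\cdot,T_0) = g$.

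It remains to verify that $u$ is a solution of (HJ$\Gamma$). Standard stability of viscosity solutions under uniform convergence shows that $u \circ \ga$ solves \eqref{HJg} in $(0,1) \times (T_0,T_1)$ for each arc $\ga$. The delicate point, namely the vertex condition for supersolutions, is handled by passing to the limit in the semidiscrete equation rather than in the viscosity definition directly, thus avoiding the difficulty associated with the existential quantifier on the arc in Definition \ref{defsupersol} {\bf (ii)}. By Propositions \ref{fundasub} and \ref{fundasuper}, the trace of each $u_n$ satisfies
\[
u_n(x,t) = G[F_x[u_n], c_x](t) \qquad\hbox{for every $(x,t) \in \VV \times [T_0, T_1)$.}
\]
Corollary \ref{cormaxistaruni} implies that the maximal subsolution operator $F_\ga$ is stable under uniform convergence of the boundary data, so $F_\ga[u_n] \to F_\ga[u]$ uniformly on $\ov{R}$ for every $\ga \in \Gamma_x$; since $\Gamma_x$ is finite, $F_x[u_n] \to F_x[u]$ uniformly on $[T_0,T_1)$. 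Lemma \ref{lippi} then gives $G[F_x[u_n], c_x] \to G[F_x[u], c_x]$ uniformly, and the identity passes to the limit so that the trace of $u$ solves \eqref{discr}. Proposition \ref{fundasol} then shows that $u$ itself is a solution of (HJ$\Gamma$).

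Finally, letting $m \to \infty$ in the Cauchy estimate (or applying Theorem \ref{unicumque} directly to $u_n$ and $u \pm |g_n - g|_\infty$) yields $|u_n - u|_\infty \leq |g_n - g|_\infty$, completing the proof. The main obstacle, the stability of the supersolution condition at the vertices, is the step where working through the semidiscrete reformulation (via the stability of the operators $F_x$ and $G$) is essential: a direct viscosity-theoretic argument would require tracking the arc selected by Definition \ref{defsupersol} {\bf (ii)} along the sequence, whereas the semidiscrete identity contains no existential quantifier and passes to the limit by inspection.
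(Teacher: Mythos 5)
Your proof is correct, and the first part (the Cauchy estimate via Theorem \ref{unicumque} and the resulting bound $|u_n-u|_\infty\leq|g_n-g|_\infty$) is exactly the paper's argument. Where you genuinely diverge is in the stability of the vertex supersolution condition. The paper works pointwise: at a point $(x_0,t_0)$ admitting a subtangent $\varphi$ with $\frac{d}{dt}\varphi(t_0)<c_{x_0}$, it transfers $\varphi$ to subtangents of $u_n(x_0,\cdot)$ at nearby times $t_n$, invokes Corollary \ref{happy} to select an arc $\ga_n\in\Gamma_{x_0}$ realizing $u_n(x_0,t_n)$ as a maximal subsolution evaluated at the vertex, uses finiteness of $\Gamma_{x_0}$ to freeze the arc along a subsequence, and then passes to the limit via Corollary \ref{2021bis} and a Perron--Ishii argument as in Proposition \ref{fundasol}. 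You instead pass to the limit globally in the semidiscrete identity $u_n(x,\cdot)=G[F_x[u_n],c_x]$, using Corollary \ref{cormaxistaruni} for the stability of each $F_\ga$, finiteness of $\Gamma_x$ for the min defining $F_x$, and Lemma \ref{lippi} for $G$, and then apply Proposition \ref{fundasol} once. This is cleaner: it avoids both the arc-tracking/subsequence extraction and the separate verification of the subsolution vertex condition, since Proposition \ref{fundasol} delivers the full solution property in one stroke. The only thing the paper's heavier route buys is that it adapts with no change to Theorem \ref{tombola}, where the Hamiltonians $H^n_\ga$ also vary and Corollary \ref{2021bis} (rather than Corollary \ref{cormaxistaruni}, which is stated for a fixed Hamiltonian) is the needed stability statement; your argument would require the analogous strengthening of Corollary \ref{cormaxistaruni} to varying Hamiltonians at that later point.
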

\begin{proof}
We have
\[g_m - |g_n - g_m|_\infty \leq g_n \leq g_m + |g_n - g_m|_\infty \Txt{in $\Gamma$, for $n$, $m$ in $\N$.}\]
We derive from the comparison principle in Theorem \ref{unicumque} that
\[ u_m - |g_n - g_m|_\infty \leq u_n \leq u_m + |g_n - g_m|_\infty \Txt{in $\Gamma \times [T_0,T_1)$,}\]
which implies
\[ |u_n-u_m|_\infty \leq |g_n-g_m|_\infty\]
then $u_n$ uniformly converges in $\Gamma \times [T_0,T_1)$ to a function $u$ and
\[|u_n-u|_\infty =\lim_m |u_n-u_m|_\infty \leq \lim_m |g_n-g_m|_\infty= |g_n-g|_\infty.\]
Taking into account the usual stability results in viscosity solutions theory, to prove that $u$ is solution of (HJ$\Gamma$) in $(T_0,T_1)$, it is enough to check that it satisfies  the condition {\bf (ii)} in the definition of supersolution.

We assume that for a given $(x_0,t_0) \in \VV  \times (T_0,T_1)$   there exists a $C^1$ subtangent $\varphi$ to $u (x_0,\cdot)$ at $t_0$ with $ \frac d{dt}\varphi(t_0) < c_{x_0}$. We derive that $\varphi$ is subtangent to $u_n (x_0,\cdot)$ at points $t_n$ with $t_n \to t_0$, and
 \[ \frac d{dt} \varphi(t_n) < c_{x_0} \Txt{for $n$ large enough.}\]
  We further  derive from   Corollary \ref{happy}, applied to $u_n$ and $(x_0,t_n)$, that
 there is $\ga_n \in \Gamma_{x_0}$ such that $u_n(x_0,t_n)$ coincides with the maximal subsolution of \eqref{HJg}, with  $\ga_n$ in place of $\ga$, with trace $g_n\circ \ga_n$ on $[0,1] \times \{0\}$  and $u_n(\ga_n(0),t)$ on $\{0\} \times \R^+$, computed at $(1,t_0)$.

 Since $\Gamma_{x_0}$ contains finite elements, we can assume, up to further extracting a subsequence, that the sequence $\ga_n$ is constant, equal, say, to $\ga$.
 We then invoke Corollary \ref{2021bis} to conclude that $u(x_0,t_0)$ coincides with the maximal solution of \eqref{HJg} with trace $g\circ \ga$ on $[0,1] \times \{0\}$  and $u(\ga(0),t)$ on $\{0\} \times \R^+$, computed at $(1,t_0)$. Arguing as in Proposition \ref{fundasol}, we deduce that
\[\psi_t(1,t_0) + H_\ga(1, \psi'(1,t_0)) \geq  0\]
for any  subtangent $\psi$, constrained to  $[0,1] \times  \ov{(T_0,T_1)}$, to $u \circ \ga$ at $(1,t_0)$. This concludes the proof.

\end{proof}

\medskip

\subsection{Existence of solutions and stability properties}

In this section  we show the existence of a solution to  (HJ$\Gamma$) in $(T_0,T_1)$, with $T_1 \leq + \infty$, and consequently to \eqref{discr}, coupled with any continuous initial datum $g$ and any flux limiter $c_x$.   We first assume the initial datum $g$ to be Lipschitz continuous in $\Gamma$, and set
\begin{equation}\label{trump9}
  M_0 = \left (\max_\ga \, \min\{m \mid H_\ga(s,(g \circ \ga)') \leq m\}\right ) \, \vee \, \left ( \max_{x \in \VV } |c_x| \right )
\end{equation}

\medskip

\begin{Theorem}\label{faber}  For any given   Lipschitz continuous initial datum $g$ at $t=T_0$, any flux limiter $c_x$,  there exists a Lipschitz continuous solution $u$ of (HJ$\Gamma$) in $(T_0,T_1)$ with
\begin{equation}\label{trump11}
  \Lip \big (  u \circ \ga(s,\cdot)\big )  \leq M_0   \Txt{for any $\ga$, any $s \in [0.1]$.}
\end{equation}
\end{Theorem}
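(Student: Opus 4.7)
The plan is a constructive one, in the spirit outlined in the introduction: first solve the semidiscrete problem \eqref{discr} on $\VV \times [T_0,T_1)$, then reconstruct a continuous solution on $\Gamma \times [T_0,T_1)$ arc by arc, and finally invoke Proposition \ref{fundasol} to conclude.

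To set up a monotone iteration, I would first exhibit a Lipschitz sub/supersolution pair. The function $u_-(y,t):=g(y)-M_0(t-T_0)$ is a subsolution of (HJ$\Gamma$): on each arc, the time-slope $-M_0$ dominates $\min\{m\mid H_\ga(s,(g\circ\ga)')\leq m\}$, so $u_-\circ\ga$ is a viscosity subsolution of \eqref{HJg}; at each vertex, the supertangents to the affine $u_-(x,\cdot)$ have slope $-M_0\leq c_x$, meeting Definition \ref{defsubsol}\,(ii). Symmetrically, $u_+(y,t):=g(y)$ is a supersolution: each $H_\ga$ is strictly positive by Remark \ref{stufa}, yielding the interior condition, and at each vertex the $C^1$ subtangents to the constant $u_+(x,\cdot)\equiv g(x)$ have slope $0>c_x$, making the hypothesis in Definition \ref{defsupersol}\,(ii) vacuous. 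By Propositions \ref{fundasub} and \ref{fundasuper}, their traces sandwich \eqref{discr}. Iterating $w^{(n+1)}(x,t):=G[F_x[w^{(n)}],c_x](t)$ from $w^{(0)}:=u_-|_{(\Gamma\times\{T_0\})\cup(\VV\times[T_0,T_1))}$ produces, by the monotonicity in Lemma \ref{lemfaber0}\,(ii) together with the monotonicity of $G[\cdot,c_x]$, a nondecreasing sequence pinned under the trace of $u_+$. Equi-Lipschitz bounds from Proposition \ref{maxistar} and Remark \ref{fabro} (with the companion spatial constant $L_0:=\max\{|p|\mid H_\ga(s,p)\leq M_0\;\forall s\}$), combined with Arzel\`a--Ascoli, Corollary \ref{cormaxistaruni}, and Lemma \ref{lippi}, yield a uniform limit $u_\infty$ solving $u_\infty = G[F_x[u_\infty],c_x]$.

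For the arc extension, I would define $u(\ga(s),t)$ as the maximal subsolution of \eqref{HJg} in $R$ with data bounded above by $g\circ\ga$ on $[0,1]\times\{T_0\}$ and by $u_\infty(\ga(0),\cdot)$, $u_\infty(\ga(1),\cdot)$ on the lateral sides. By Proposition \ref{2021} and Remark \ref{fabro}, each $u\circ\ga$ is then a Lipschitz solution on $R$ with time-Lipschitz constant $M_2\leq M_0$, which delivers \eqref{trump11}. Continuity of the glued function $u$ at each vertex $x$ reduces to verifying that the extension reaches $u_\infty(x,\cdot)$ at $s=1$; once this is known, the hypotheses of Proposition \ref{fundasol} are met and $u$ is a solution of (HJ$\Gamma$).

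The main obstacle is precisely this vertex continuity. A priori the fixed point identity gives only $F_\ga[u_\infty](1,\cdot)\geq u_\infty(x,\cdot)$, with possibly strict inequality for arcs $\ga$ not realizing the minimum in $F_x[u_\infty]$; one must then show that imposing the upper constraint $\leq u_\infty(x,\cdot)$ at $s=1$ does not force the maximal subsolution strictly below $u_\infty(x,\cdot)$. This is the mechanism encoded in Proposition \ref{xanax} (linking the $G$-envelope of the trace of a supersolution at an interior point to the minimum of the $G$-envelopes of the maximal subsolutions on either side), complemented by Corollary \ref{happy} to select at each time where the flux limiter is non-active the arc along which $u_\infty(x,\cdot)$ is realized. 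A secondary concern is that the procedure must preserve the bound $M_0$ uniformly, but this is automatic because $L_0$ is the spatial Lipschitz constant associated with $M_0$ by convexity and coercivity of the $H_\ga$, and the iteration stays within this pair of constants.
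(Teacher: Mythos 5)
Your two--stage strategy (solve the semidiscrete problem \eqref{discr} first by monotone iteration, then rebuild the solution arc by arc and invoke Proposition \ref{fundasol}) is genuinely different from the paper's proof, which never solves \eqref{discr} in advance: it builds a candidate directly from the one--sided maximal subsolutions $v_\ga$ and the $G$--envelope at the vertices, uses finite speed of propagation (Lemma \ref{propaga1}, Corollary \ref{lemfaberdue}) to obtain a solution on a short interval $(T_0,T_0+\de)$, and then continues it in time, the uniform bound \eqref{trump11} being exactly what keeps the time step $\de$ bounded away from zero. Your first stage is essentially sound: $u_-=g-M_0(t-T_0)$ and $u_+=g$ are a sub/supersolution pair, their traces sandwich \eqref{discr} by Propositions \ref{fundasub} and \ref{fundasuper}, the iteration is monotone and equi--Lipschitz in time with constant $M_0$, and passage to the limit in the fixed--point relation is covered by Corollary \ref{cormaxistaruni} and Lemma \ref{lippi}.

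The gap is exactly where you locate it, and the tools you cite do not close it. To apply Proposition \ref{2021} and Remark \ref{fabro}, and then Proposition \ref{fundasol}, you must know that the merge of $g\circ\ga$, $u_\infty(\ga(0),\cdot)$, $u_\infty(\ga(1),\cdot)$ is \emph{admissible on the whole of} $\partial_p R$, i.e.\ that a single subsolution attains all three pieces simultaneously; only then does the maximal subsolution below these data attain them, making the glued $u$ continuous at the vertices. Corollary \ref{happy} cannot be used here: it is a property of functions already known to solve (HJ$\Gamma$), which is precisely what you are trying to construct, so the argument would be circular. Proposition \ref{xanax} compares $G$--envelopes of a given supersolution at an interior point of an arc and is tailored to the proof of Proposition \ref{fundasuper}; it says nothing about admissibility of merged boundary data. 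The step can in fact be closed within your framework, but by a different mechanism: since $u_\infty(x,\cdot)=G[F_x[u_\infty],c_x]\leq F_{\ga'}[u_\infty](1,\cdot)$ for \emph{every} $\ga'\in\Gamma_x$, not only the minimizing arc, the two one--sided maximal subsolutions $A=F_\ga[u_\infty]$ and $B(s,t)=F_{-\ga}[u_\infty](1-s,t)$ satisfy $A(1,\cdot)\geq u_\infty(\ga(1),\cdot)$ and $B(0,\cdot)\geq u_\infty(\ga(0),\cdot)$; each attains its own lateral datum by Proposition \ref{lemfaberuno}, whose slope condition \eqref{catania55} holds because $u_\infty(x,\cdot)$ is a $G[\cdot,c_x]$--envelope with $c_x\leq c_\ga$; and $\min\{A,B\}$ is a subsolution by Lemma \ref{catania1} attaining all three data. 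Without this (or the paper's finite--speed argument) the reconstruction step, and hence the proof, is incomplete.
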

\begin{proof} We define
\[ \mathcal T= \{T > 0 \mid \exists \; \hbox{Lip. sol. $u$ to (HJ$\Gamma$) with $g$, $c_x$, in $(T_0,T)$ sat. \eqref{trump11}}\}.\]
We first  prove that $\mathcal T \neq \emptyset$. We set $R=(0,1) \times (T_0,T_1)$.

\smallskip

\noindent {\bf Step 1.} \;
 Given any arc $\ga$,  let
 $ v_\ga$ be  the maximal subsolution of
\eqref{HJg} in $R$ with initial datum $g \circ \ga$ at $t =T_0$, we
set
\begin{eqnarray*}
 v(x,t) &=& \min_{\ga \in \Gamma_x}  v_\ga(1,t) \Txt{for any
$x \in \VV$, $t \in (T_0,T_1)$} \\
  v(y,T_0) &=& g(y)  \Txt{for any
$y \in \Gamma$.}
\end{eqnarray*}

\smallskip

\noindent {\bf Step 2.} \;
We have
\[F_{\ga}[v](1,t) \leq v_\ga(1,t) \Txt{in $[T_0,T_1)$, for any $\ga$.}\]
Since, given $x \in \VV$, $t \in [T_0,T_1)$
\[v(x,t) = v_{\ga^x}(1,t) \Txt{for a suitable $\ga^x \in\Gamma_x$}\]
  we deduce  that
  \begin{equation}\label{pasqua3}
v(x,t) \geq F_{\ga^x}[v](1,t) \geq F_x[v](t) \Txt{ for any $x \in \VV$, $t \in [T_0,T_1)$}
\end{equation}

\smallskip

\noindent {\bf Step 3.} \;
We define
\begin{eqnarray} \label{pasqua01}
 \ov u(x,t) &=& G[v(x,\cdot),c_x](t) \leq v(x,t) \Txt{$x \in \VV$, $t \in [T_0,T_1)$} \\
  \ov u(y,T_0) &=& g(y) \Txt{$y \in \Gamma$.} \label{pasqua000}
\end{eqnarray}
To sum up: $\ov u(x,t)$  from $\VV \times [T_0,T_1) \to \R$ has been defined through three steps:
\begin{itemize}
  \item[--] first, we have set $v_\ga(1,t)$, for $\ga \in \Gamma_x$,  as the maximal subsolution of  \eqref{HJg} in $R$ among those agreeing with $g \circ \ga$ at $t=T_0$;
  \item[--]  second, we have defined $v(x,t)= \min_{\ga \in \Gamma_x} v_\ga(1,t)$;
  \item[--] and, finally,  $\ov u(x,t)= G[v(x,\cdot),c_x](t)$.
\end{itemize}
We see through estimate   \eqref{trump1} that  $\Lip v_\ga(1,\cdot) \leq M_0$, see \eqref{trump9}, for any $\ga \in \Gamma_x$, and consequently $\Lip v(x,\cdot) \leq M_0$, so that we have, according to Remark \ref{cazzopapa}
\begin{equation}\label{pasqua1}
\Lip \ov u(x,\cdot) \leq M_0  \Txt{for any $x \in \VV$.}
\end{equation}

\smallskip

\noindent {\bf Step 4.} \;
We  apply, for any $\ga$,  Proposition \ref{lemfaberuno}  to the admissible boundary datum $v_\ga$  in $\partial^- R$  and derive, via \eqref{pasqua01} and the very definition of $v$,  that   $(g \circ \gamma,\ov u(\ga(0),\cdot))$ is an admissible datum for \eqref{HJg} on
$\partial_p^- R$, for any arc $\ga$,  and the same holds true for  $(g \circ\ga,\ov u(\ga(1),t))$  in $\partial_p^+ R$.

We denote by $u_1 \circ \ga$, $u_2 \circ \ga$ the corresponding maximal subsolutions of \eqref{HJg} with the above data taken on $\partial_p^- R$, $\partial_p^+ R$ , respectively. According to the estimates given in \eqref{trump3}, \eqref{trump4}, \eqref{trump5}, \eqref{trump6} and  \eqref{pasqua1}, we get for any $\ga$
\begin{equation}\label{pasqua55}
 \Lip (u_1 \circ \ga), \, \Lip (u_2 \circ \ga) \leq M_0  \vee  \big (\max \{|p| \mid H_\ga(s,p)  \leq M_0 \} \big ).
 \end{equation}
 and
 \begin{equation}\label{pasqua56}
  \Lip (u_1 \circ \ga(s,\cdot)), \, \Lip (u_2 \circ \ga(s,\cdot)) \leq M_0 \txt{for any $s \in [0,1]$}
\end{equation}
 By monotonicity of $G$, \eqref{pasqua3} and \eqref{pasqua01}, we further have
\begin{eqnarray}
  G[F_x[\ov u] ,c_x](t) &\leq& G[F_x[v],c_x](t) \label{pasqua4}  \\
 &\leq& G[v(x,\cdot),c_x](t)= \ov u(x,t) \nonumber
\end{eqnarray}
for any $x \in \VV$, $t \in [T_0,T_1)$.

\smallskip

\noindent {\bf Step 5.} \;
We   apply  Corollary \ref{lemfaberdue} to all arcs $\ga$ and deduce for a suitable $\de >0$, with $T_0 +\de < T_1$,   depending on the $H_\ga$ and the Lipschitz constants of  $ u_1\circ\ga$, $u_2 \circ \ga$, see \eqref{pasqua55}, that, for any $\ga$, the merge of all three functions $g
\circ \ga$, $\ov u(\ga(0),t)$, $\ov u(\ga(1),t)$ is admissible in
$\partial_p  \big ( (0,1) \times (T_0,T_0+\de) \big )$.  We denote by $u \circ \ga$ the corresponding solutions, with  $u \circ \ga: [0,1] \times [T_0,T_0 +\de] \to \R$. We have for any $x \in \VV$
\[F_x[ u](t) = F_x[\ov u](t)  \geq \ov u(x,t)= u(x,t) \txt{in $[T_0,T_0+\de]$} \]
 and accordingly
\begin{equation}\label{pasqua5}
G[F_x[ u],c_x](t) = G[F_x[\ov u],c_x](t) \geq \ov u(x,t)= u(x,t) \txt{in $[T_0,T_0+\de]$.}
\end{equation}
By  combining \eqref{pasqua4}, \eqref{pasqua5}, we obtain
\[ G[F_x[ u],c_x](t) = G[F_x[\ov u],c_x](t) =\ov u(x,t)= u(x,t).\]
This implies by Proposition \ref{fundasol} that $u$ is solution to (HJ$\Gamma$) in $\Gamma \times (T_0, T_0+\de)$. We therefore see, taking into account \eqref{pasqua56}, that $T_0 + \de \in\mathcal T$.

\smallskip

\noindent {\bf Step 6.} \; We set $T^* = \sup \mathcal T$, and proceed proving that there is a solution of (HJ$\Gamma$) in $[T_0,T^*)$ satisfying \eqref{trump11}.  We select an increasing sequence $T_n \in \mathcal T$ with $T_n \to T^*$, and denote by $u_n$ the corresponding solutions satisfying \eqref{trump11} in $ (T_0,T_n)$.
By the uniqueness result in Corollary \ref{uniqlo} we have
\[u_n(x,t)=u_{n+1}(x,t) \Txt{ for $(x,t) \in \Gamma \times (T_0,T_n)$, for any $n \in \N$,}\]
so that a solution $u$ in $[T_0,T^*)$ can be unambiguously defined via
\[u(x,t)=u_n(x,t) \Txt{for $n$ with $T_n >t$.}\]
Since all the $u_n$ satisfy \eqref{trump11}, the same holds true for $u$.

\smallskip

\noindent {\bf Step 7.} \; To conclude the proof, it is then enough proving that $T^*=T_1$. We assume for purposes of contradiction that $T^* < T_1$, and iterate the  construction preformed in the first part of the proof  in the interval $(T_n,T_1)$ starting from $u(\cdot,T_n)$ as initial datum at $t=T_n$, where $u$ and $T_n$ are defined as in  Step 6.

Since $u$ satisfies \eqref{trump11}, we get, according to Proposition \ref{hoje}
\begin{equation}\label{uniqlo1}
 \max_\ga \, \min\{m \mid H_\ga(s,(u \circ \ga)'(s,T_n) \leq m\} \leq M_0.
\end{equation}
Arguing as in the first part of the proof, we show that we can define a solution of (HJ$\Gamma$) with initial datum $u(\cdot,T_n)$ in an interval $(T_n,T_n +\de_n)$ with $T_n + \de_n <T_1$. By the gluing result given in Corollary \ref{superhappy}, we get altogether a solution in $(T_0,T_n + \de_n)$.

The crucial point is that, apart the restriction $T_n + \de_n <T_1$, $\de_n$ does not depend further on $n$. It is in fact the positive constant for which Corollary \ref{lemfaberdue}  holds true, and it depends on $H_\ga$ and the Lipschitz constants  of the solutions of \eqref{HJg}, for any $\ga$,  in $(T_n,T_1)$ to be merged. Due to \eqref{uniqlo1}, they satisfy estimates as in \eqref{pasqua55}, independent, to repeat, of $n$.

To get a contradiction, it is then enough to take, for a suitable $n$, $\de_n$ with
\[T_1 > T_n + \de_n > T^*.\]
This ends the proof.
\end{proof}

\smallskip

\begin{Remark} \label{postpasqua}   We denote by $u$ the unique Lipschitz continuous solutions of (HJ$\Gamma$).    According to \eqref{trump7}, \eqref{trump8}, we see that
\[ \Lip (u \circ \ga) \leq M_0 \vee \big (\max \{|p| \mid H_\ga(s,p)  \leq M_0 \} \big ) \txt{for any $\ga$.}\]
The Lipschitz constant of the solution therefore only depends on the the Hamiltonians $H_\ga$ and the Lipschitz constant of the initial datum.
\end{Remark}

\medskip

We proceed proving, through Corollary \ref{corunicumque},  existence of solutions of (HJ$\Gamma$) for any continuous initial datum.

\begin{Proposition}\label{nichil} For any continuous  initial datum $g$, any flux limiter $c_x$,  there exists an uniformly  continuous solution $u$ of (HJ$\Gamma$) in $(T_0,T_1)$. \end{Proposition}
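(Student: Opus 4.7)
The plan is to deduce this from Theorem \ref{faber} (existence for Lipschitz initial data) and Corollary \ref{corunicumque} (stability under uniform convergence of initial data) by approximating $g$ uniformly by a sequence of Lipschitz continuous initial data.

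Since $\Gamma$ is compact (finitely many arcs of bounded length and no loops), the continuous datum $g:\Gamma\to\R$ is in fact uniformly continuous. I would build Lipschitz approximations by the inf--convolution construction of Lemma \ref{hopf}, applied to the compact metric space $\Gamma$ with the Euclidean distance inherited from $\R^N$: set
\[
g_n(y)=\inf_{z\in\Gamma}\bigl\{g(z)+n\,|y-z|\bigr\}.
\]
Exactly as in Lemma \ref{hopf}, for $n$ large enough $g_n$ is $n$--Lipschitz on $\Gamma$, $g_n\leq g$, and $|g_n-g|_\infty\to 0$, with a rate depending only on the continuity modulus of $g$. In particular each $g\circ\ga$--counterpart, namely $g_n\circ\ga$, is Lipschitz on $[0,1]$, so each $g_n$ is an admissible Lipschitz initial datum for $(\mathrm{HJ}\Gamma)$.

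Applying Theorem \ref{faber} to $g_n$ and the flux limiter $c_x$, we obtain, for every $n$, a Lipschitz continuous solution $u_n$ of $(\mathrm{HJ}\Gamma)$ in $(T_0,T_1)$ with initial datum $g_n$ at $t=T_0$. Being Lipschitz on $\Gamma\times[T_0,T_1)$, each $u_n$ is in particular uniformly continuous, so the hypotheses of Corollary \ref{corunicumque} are met. The corollary then yields a solution $u$ of $(\mathrm{HJ}\Gamma)$ in $(T_0,T_1)$, agreeing with $g$ at $t=T_0$, such that $u_n\to u$ uniformly on $\Gamma\times[T_0,T_1)$ with the estimate
\[
|u_n-u|_\infty\le |g_n-g|_\infty.
\]

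It remains to check that $u$ is uniformly continuous on $\Gamma\times[T_0,T_1)$, which is the only point not stated verbatim in Corollary \ref{corunicumque}. Given $\eps>0$, pick $N$ with $|u_N-u|_\infty<\eps/3$; since $u_N$ is Lipschitz (hence uniformly continuous) on $\Gamma\times[T_0,T_1)$, there is $\delta>0$ such that $|u_N(x,t)-u_N(y,s)|<\eps/3$ whenever $|x-y|+|t-s|<\delta$, and then
\[
|u(x,t)-u(y,s)|\le 2\,|u_N-u|_\infty+|u_N(x,t)-u_N(y,s)|<\eps,
\]
proving uniform continuity. There is no real obstacle here: the heavy lifting was done in Theorem \ref{faber} and Corollary \ref{corunicumque}; the only mildly delicate point is to be sure the inf--convolution of $g$ against the Euclidean distance of $\R^N$ restricted to $\Gamma$ produces Lipschitz functions compatible with the arc parametrizations, which is immediate because each arc is regular and the restriction of the Euclidean distance is bi--Lipschitz to the arclength parameter.
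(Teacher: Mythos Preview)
Your proof is correct and follows essentially the same route as the paper: approximate $g$ uniformly by Lipschitz data via the inf--convolution of Lemma \ref{hopf}, apply Theorem \ref{faber} to obtain Lipschitz solutions $u_n$, and invoke Corollary \ref{corunicumque} to pass to the limit. The only difference is that you spell out the (standard) $\eps/3$ argument for uniform continuity of the limit and the compatibility of the inf--convolution with the arc parametrizations, points the paper leaves implicit.
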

\begin{proof} Let $g_n$ be a sequence of Lipschitz continuous function in $\Gamma$ uniformly converging to $g$,  see Lemma \ref{hopf} for the existence of such approximations. By applying Corollary \ref{corunicumque}, we see that the  unique Lipschitz continuous solutions $u_n$ to  (HJ$\Gamma$) in $(T_0,T_1)$ with initial datum $g_n$, which do exist by Theorem \ref{faber}, uniformly converge to an uniformly continuous solution of  (HJ$\Gamma$) in $(T_0,T_1)$ with initial datum $g$.

\end{proof}

\smallskip
We proceed giving a new version of Theorem \ref{unicumque}.

\begin{Theorem}\label{unicumquebis} Let $u$, $v$ be  continuous subsolution and   solution  of (HJ$\Gamma$) in $(T_0,T_1)$, respectively, with $u(\cdot,T_0) \leq v(\cdot,T_0)$, then $u \leq v$ in $\Gamma \times [T_0,T_1)$.
\end{Theorem}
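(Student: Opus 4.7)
The plan is to deduce this from the already-established comparison Theorem \ref{unicumque} by passing to bounded sub-intervals, where compactness of $\Gamma$ promotes continuity to uniform continuity.

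Fix any $T_1' \in (T_0,T_1)$ and consider the restrictions of $u$ and $v$ to $\Gamma \times [T_0, T_1']$. Since the network $\Gamma$ is compact (no arc having infinite length, see the introductory remarks) and $[T_0,T_1']$ is compact, the restrictions are continuous functions on a compact space, hence uniformly continuous. I must also check that $u$ remains a subsolution and $v$ a supersolution in $(T_0,T_1')$. The subsolution conditions are local in $(t,x)$ and transfer trivially. For the supersolution condition of Definition \ref{defsupersol} \textbf{(ii)} at a vertex $x$ and time $t_0 \in (T_0,T_1')$, the relevant constrained subtangents to $v\circ\ga$ at $(1,t_0)$ in $[0,1] \times \ov{(T_0,T_1')}$ agree, on a sufficiently small neighborhood of $(1,t_0)$, with those constrained to $[0,1]\times \ov{(T_0,T_1)}$, because $t_0 < T_1'$ keeps us away from the upper time boundary; hence the supersolution property carries over.

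With $u(\cdot,T_0) \leq v(\cdot,T_0)$ as hypothesis, Theorem \ref{unicumque} applied on $(T_0,T_1')$ yields $u \leq v$ on $\Gamma \times [T_0,T_1']$. Since $T_1' \in (T_0,T_1)$ was arbitrary, letting $T_1' \uparrow T_1$ gives $u \leq v$ on $\Gamma \times [T_0,T_1)$, as required.

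There is no real obstacle here: the difficulty of comparing (sub)solutions that are merely continuous rather than uniformly continuous is neutralized by the compactness of $\Gamma$, which turns the problem on any compact time strip into the setting already handled by Theorem \ref{unicumque}. The only small point to verify carefully is the second, namely that the vertex supersolution condition truly restricts to smaller intervals, which is a routine consequence of the local nature of the constrained-subtangent formulation.
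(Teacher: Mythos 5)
Your proof is correct, but it follows a genuinely different route from the paper's. The paper argues by contradiction at a point $(x_0,t_0)$ where $u>v$: it invokes the existence result (Proposition \ref{nichil}) to produce uniformly continuous solutions $\ov u$, $\ov v$ on $(t_0,T_1)$ with initial data $u(\cdot,t_0)$, $v(\cdot,t_0)$, glues them to $u$, $v$ on $\Gamma\times[T_0,t_0]$ via Corollary \ref{superhappy} (the truncated pieces being uniformly continuous because $\Gamma\times[T_0,t_0]$ is compact), and then applies Theorem \ref{unicumque} to the glued functions. Your argument dispenses with the existence theory and the gluing corollary altogether: restriction to $\Gamma\times[T_0,T_1']$ upgrades continuity to uniform continuity by compactness of $\Gamma$, and the sub/supersolution conditions of Definitions \ref{defsubsol} and \ref{defsupersol} localize in time; the one point that genuinely needs checking, which you do address, is that for $t_0<T_1'$ the class of subtangents constrained to $[0,1]\times\ov{(T_0,T_1')}$ coincides with the class constrained to $[0,1]\times\ov{(T_0,T_1)}$, so condition {\bf (ii)} for supersolutions passes to the restriction. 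Two remarks on what each approach buys. Your argument is more elementary and actually proves the stronger statement with ``supersolution'' in place of ``solution'': the paper's route genuinely needs $v$ to be a solution, since Corollary \ref{superhappy} (through Corollary \ref{happy}) only glues solutions, not bare supersolutions. Conversely, the paper's construction does not rely on compactness of $\Gamma$ and would survive on networks with unbounded arcs, where your compactness step fails; in the compact setting adopted throughout this paper, both arguments are valid.
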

\begin{proof} Assume by contradiction that there exists  $(x_0,t_0) \in \Gamma \times (T_0,T_1)$  with
\[u(x_0,t_0) > v(x_0,t_0).\]
According to Proposition \ref{nichil}, there exist uniformly continuous solutions $\ov u$, $\ov v$ to  (HJ$\Gamma$) in $ (t_0,T_0)$ with initial data $u(\cdot,t_0)$, $v(\cdot,t_0)$, respectively, at $t=t_0$. We set
\[ \widetilde u(x,t)= \left \{ \begin{array}{cc}
                        u(x,t) & \txt{ for $(x,t) \in \Gamma \times [T_0,t_0)$} \\
                        \ov u(x,t) & \txt{ for $(x,t) \in \Gamma \times (t_0,T_1)$}
                      \end{array} \right .\]
  and
                       \[ \widetilde v(x,t)= \left \{ \begin{array}{cc}
                        v(x,t) & \txt{ for $(x,t) \in \Gamma \times [T_0,t_0)$} \\
                        \ov v(x,t) & \txt{ for $(x,t) \in \Gamma \times (t_0,T_1)$}
                      \end{array} \right . \]
According to Corollary \ref{superhappy}, $\widetilde u$, $\widetilde v$ are uniformly continuous subsolution and solution to  (HJ$\Gamma$) in $(T_0,T_1)$, respectively,  but then the inequality $\widetilde u > \widetilde v$ at $(x_0,t_0)$ is in contrast with Theorem \ref{unicumque}.

\end{proof}

\smallskip

By summarizing the information gathered in  Theorem   \ref{faber}, Proposition \ref{nichil} and Theorem \ref{unicumquebis}, we can state:

\begin{Proposition}\label{orte}  For any continuous initial datum $g$ and flux limiter $c_x$, there exists one and only one continuous solution to (HJ$\Gamma$) in $(T_0,T_1)$. This solution is in addition uniformly continuous in $\Gamma \times [T_0,T_1)$. If $g$ is Lipschitz continuous, the solution is Lipschitz continuous as well.
\end{Proposition}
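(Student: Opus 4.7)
The plan is to assemble Proposition \ref{orte} directly from the three main results already established, so most of the work is bookkeeping rather than new argument. Existence for Lipschitz data, together with the Lipschitz bound \eqref{trump11}, is exactly the content of Theorem \ref{faber}. Existence of an uniformly continuous solution for an arbitrary continuous initial datum $g$ is the content of Proposition \ref{nichil}; its proof already produces the solution as the uniform limit of the Lipschitz solutions associated to a Lipschitz approximation $g_n \to g$, so uniform continuity on $\Gamma \times [T_0,T_1)$ is built into the construction via Corollary \ref{corunicumque}.

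For uniqueness, I would argue as follows. Suppose $u_1$ and $u_2$ are two continuous solutions of (HJ$\Gamma$) in $(T_0,T_1)$ with $u_1(\cdot,T_0) = u_2(\cdot,T_0) = g$. Since every solution is in particular a subsolution, Theorem \ref{unicumquebis} applied with the subsolution $u_1$ and the solution $u_2$ yields $u_1 \leq u_2$ on $\Gamma \times [T_0,T_1)$; interchanging the roles gives $u_2 \leq u_1$, hence $u_1 = u_2$. Note that Theorem \ref{unicumquebis} does not require uniform continuity of the competitors, so this step handles continuous solutions directly without any additional regularity hypothesis.

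To combine everything: given a continuous initial datum $g$, Proposition \ref{nichil} furnishes an uniformly continuous solution, and the uniqueness argument above then rules out the existence of any other continuous solution with the same initial trace. Thus the solution delivered by Proposition \ref{nichil} is the unique continuous solution, and it is automatically uniformly continuous on $\Gamma \times [T_0,T_1)$. If in addition $g$ is Lipschitz continuous, Theorem \ref{faber} produces a Lipschitz continuous solution satisfying \eqref{trump11}; by uniqueness this must coincide with the solution obtained from Proposition \ref{nichil}, so that solution is in fact Lipschitz continuous as claimed.

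There is no real obstacle here; the only point that requires a moment's attention is making sure the uniqueness invocation is legitimate for merely continuous (not necessarily uniformly continuous) solutions, which is precisely why Theorem \ref{unicumquebis}, rather than the earlier Theorem \ref{unicumque}, is the right tool to cite.
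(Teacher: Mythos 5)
Your proposal is correct and matches the paper's own route exactly: the paper presents Proposition \ref{orte} precisely as a summary of Theorem \ref{faber}, Proposition \ref{nichil} and Theorem \ref{unicumquebis}, with the uniqueness obtained by the same symmetric application of Theorem \ref{unicumquebis}. Your observation that Theorem \ref{unicumquebis} (rather than Theorem \ref{unicumque}) is the right tool because it does not require uniform continuity of the competitors is exactly the point the paper relies on.
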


\smallskip

We now consider  a sequence of Hamiltonians $H^n_\ga$ for any arc $\ga$,  a sequence of continuous functions $g_n$ from $\Gamma$ to $\R$, and a sequence of  real numbers $c^n_x$, for any vertex $x$,  with
\[ c^n_x \leq \min_{\ga \in \Gamma_x} c^n_\ga,\]
where
\[  c^n_\ga= - \max_s \, \min_p  H^n_\ga(s,p) \Txt{for any $\ga$.}\]

By  adapting the argument of Corollary \ref{corunicumque}, we can finally prove:

\begin{Theorem}\label{tombola} Assume that $H^n_\ga(s,p)  \longrightarrow H_\ga(s,p)$ uniformly in $[0,1] \times [T_0,T_1)$, for any $\ga$, that  $g_n$ is uniformly convergent to  a function $g$ in $\Gamma$,  and that  $c^n_x \longrightarrow c_x$ for any $x$. Let us denote by $u_n$ the sequence of continuous  solutions to (HJ$\Gamma$) with $H^n_\ga$,  $c^n_x$ in place of $H_\ga$, $c_x$, respectively, and initial  datum $g_n$. Then the $u_n$  locally uniformly converge in $\Gamma \times [T_0,T_1)$ to the continuous solution $u$ of  (HJ$\Gamma$) with initial datum $g$ and flux limiter $c_x$.
\end{Theorem}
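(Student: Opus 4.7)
My plan is to reduce to the Lipschitz initial datum case, extract a convergent subsequence via Arzelà--Ascoli, identify the limit as the unique continuous solution associated to $(H_\ga,c_x,g)$, and conclude by uniqueness that the entire sequence converges.

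\textbf{Reduction to the Lipschitz case.} Using Lemma \ref{hopf}, for each $k\in\N$ I approximate $g_n$ and $g$ by Lipschitz data $g_n^{[k]}$ and $g^{[k]}$, with $|g_n^{[k]}-g_n|_\infty\to 0$ and $|g^{[k]}-g|_\infty\to 0$ as $k\to\infty$, uniformly in $n$. By Theorem \ref{faber} the problems with $(H^n_\ga,c^n_x,g_n^{[k]})$ admit Lipschitz solutions $u_n^{[k]}$, and the comparison principle in Theorem \ref{unicumquebis}, applied to $(H^n_\ga,c^n_x)$, yields $|u_n^{[k]}-u_n|_\infty\le|g_n^{[k]}-g_n|_\infty$, with an analogous bound for the limit problem. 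A standard triangle inequality argument then shows that it suffices to establish the conclusion when $g_n,g$ are Lipschitz continuous with uniformly bounded Lipschitz constants.

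\textbf{Compactness.} In the Lipschitz regime, Remark \ref{postpasqua} provides a Lipschitz bound on each $u_n\circ\ga$ that depends only on $H^n_\ga$ and $\Lip g_n$; since $H^n_\ga\to H_\ga$ uniformly on $[0,1]\times\R$ and $\Lip g_n$ is bounded, these bounds are uniform in $n$. Combined with uniform boundedness of $u_n$ on compact time intervals (from the Lipschitz estimate and boundedness of the initial data), Arzelà--Ascoli produces a subsequence, still denoted $u_n$, converging locally uniformly in $\Gamma\times[T_0,T_1)$ to a Lipschitz function $u$, with $u(\cdot,T_0)=g$.

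\textbf{Identification of the limit.} On each arc $\ga$, the standard viscosity stability theorem (applicable because $H^n_\ga\to H_\ga$ locally uniformly) ensures that $u\circ\ga$ is a viscosity solution of \eqref{HJg} in $(0,1)\times(T_0,T_1)$. The subsolution condition at a vertex $x$, namely $\frac{d}{dt}\psi(t_0)\le c_x$ for supertangents $\psi$, follows by a routine perturbation argument: adding a small quadratic produces supertangents to $u_n(x,\cdot)$ at points $t_n\to t_0$, where $\frac{d}{dt}\psi_\eps(t_n)\le c^n_x$, and letting $n\to\infty$ then $\eps\to 0$ gives the claim. The delicate step is the supersolution condition at a vertex $x$ at a time $t_0$ where a $C^1$ subtangent $\varphi$ to $u(x,\cdot)$ satisfies $\frac{d}{dt}\varphi(t_0)<c_x$. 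I perturb $\varphi$ to make the inequality strict, so that for large $n$ there exist $t_n\to t_0$ and subtangents to $u_n(x,\cdot)$ at $t_n$ with slope strictly less than $c^n_x$. Applying Corollary \ref{happy} to $u_n$, I obtain arcs $\ga_n\in\Gamma_x$ such that $u_n(x,t_n)$ equals the maximal subsolution of the equation associated to $H^n_{\ga_n}$ with trace $g_n\circ\ga_n$ on $[0,1]\times\{T_0\}$ and $u_n(\ga_n(0),\cdot)$ on $\{0\}\times[T_0,T_1)$, evaluated at $(1,t_n)$. Finiteness of $\Gamma_x$ allows me to assume $\ga_n\equiv\ga$, and Corollary \ref{2021bis} (stability of maximal subsolutions with respect to Hamiltonians and boundary data) shows that $u(x,t_0)$ coincides with the maximal subsolution associated to $H_\ga$, $g\circ\ga$ and $u(\ga(0),\cdot)$, computed at $(1,t_0)$. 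A Perron--Ishii argument, exactly as at the end of Proposition \ref{fundasol} and Corollary \ref{corunicumque}, then forbids the existence of any constrained subtangent $\phi$ to $u\circ\ga$ at $(1,t_0)$ with $\phi_t(1,t_0)+H_\ga(1,\phi'(1,t_0))<0$, which is the required condition.

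\textbf{Conclusion.} Thus every limit point of $(u_n)$ is a continuous (hence, by Proposition \ref{orte}, the unique) solution of (HJ$\Gamma$) with data $(H_\ga,c_x,g)$, and the full sequence converges locally uniformly to $u$. The main obstacle is the vertex supersolution property in the limit, where the interaction between varying Hamiltonians, varying flux limiters, and the arc-selection mechanism of Corollary \ref{happy} must be handled simultaneously; the key leverage is the stability statement for maximal subsolutions in Corollary \ref{2021bis} together with the finiteness of $\Gamma_x$ used to stabilize the selected arc.
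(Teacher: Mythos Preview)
Your proposal is correct and follows essentially the same route as the paper's proof: Lipschitz approximations of the initial data combined with Remark \ref{postpasqua} to obtain uniform moduli of continuity, Ascoli to extract a convergent subsequence, standard viscosity stability for the interior equations, and the combination of Corollary \ref{happy} with Corollary \ref{2021bis} (plus finiteness of $\Gamma_x$) to pass the vertex supersolution condition {\bf (ii)} to the limit. The only organizational difference is that you first \emph{reduce} to the Lipschitz initial datum case via a triangle inequality and then run the compactness argument, whereas the paper keeps the general continuous data and uses the Lipschitz approximations $g_n^k$ directly to produce a common continuity modulus $\inf_k\{a_k+\ell_k\,r\}$ for the $u_n$'s; the ingredients and the delicate vertex step are identical.
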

\begin{proof}
 We claim that the $u_n$'s are equibounded and equicontinuous.  We denote by $g_n^k$  sequences of Lipschitz continuous functions on $\Gamma$ with
\[ \lim_k g_n^k=g_n \txt{for any $n$, uniformly in $\Gamma$ \quad and} \; \Lip g_n^k=k,\]
see Lemma \ref{hopf} for the existence of such approximations. We denote by $u_n^k$ the solutions corresponding to $g_n^k$ and the flux limiter $c^n_x$. Since, in view of Remark \ref{postpasqua}, the Lipschitz constants of $u_n^k$ only depend on the Hamiltonians $H^n_\ga$, which uniformly converge to $H_\ga$, and the Lipschitz constants of $g_n^k$, we derive that the family of $u_n^k$, with $k$ fixed and $n$ varying in $\N$, is equiLipschitz continuous. We set
\[\ell_n  = \sup_k \Lip u_n^k.\]
Since the rate of convergence of $g^k_n$ to $g_n$ depends on the continuity modulus of $g_n$, see Lemma \ref{hopf} {\bf (ii)}, and the $g_n$'s  are equicontinuous, there exists an infinitesimal sequence $a_k >0$ with
\[a_k \geq |g_n^k-g_n|_\infty \Txt{for any $n$.}\]
This implies by Corollary \ref{corunicumque}
\[a_k \geq |u_n^k -u_n|_\infty \Txt{for any $n$.}\]
We derive that
\[|u_n(s_1,t_1) -u_n(s_2,t_2)| \leq  \inf_k \{a_k + \ell_k \, (|s_1-t_1|+ |s_2-t_2|)\} \Txt{for any $n$.}\]
Therefore the $u_n$'s are equicontinuous and, since they start from the initial data $g_n$ which are equibounded, they are locally equibounded as well,  which proves the claim. This in turn implies by Ascoli Theorem that the $u_n$ are locally uniformly convergent, up to subsequences, to a continuous function denoted by $u$.

Taking into account the usual stability results in viscosity solutions theory, to prove that $u$ is solution of (HJ$\Gamma$), it is enough to check that it satisfies  the condition {\bf (ii)} in the definition of supersolution.  This can be done arguing as in the last part of Corollary \ref{corunicumque}, with obvious modification.
\end{proof}

\bigskip

\begin{appendix}

\section{$t$--partial sup convolutions} \label{AA}

We set, as usual, $R = (a,b) \times (T_0,T_1)$, with $T_1 \leq + \infty$. Given an uniformly  continuous function $u$ in $\ov R$ with continuity modulus   $\om$,  we define its $t$--partial sup--convolutions
via:
\begin{equation}\label{supco}
  u^\de(s,t)= \max \left \{ u(s,r) - \frac
1{2\de} \, (r -t)^2  \mid  r \in \ov{[T_0,T_1)} \right \},
\end{equation}
for $\de >0$, note that the maximum in \eqref{supco} does exist, even if  $T_1=+\infty$, because $u$ has sublinear growth for $t$ going to $+ \infty$. The next proposition summarizes  some properties of interest of this regularization.

\begin{Proposition}\label{propsupco} We  have
\begin{itemize}
    \item[{\bf (i)}] $u^\de$ uniformly converges to $u$ in $R$ as $\de$ goes to $0$;
    \item[{\bf (ii)}] if $u$ is a subsolution of \eqref{HJloc} in $R$, then for any $\de$, there exists $T_\de=\OO(\sqrt \de)$ such that  $ u^\de$ is a Lipschitz continuous subsolution of \eqref{HJloc} in $(a,b) \times (T_0+T_\de, T_1)$.
\end{itemize}
\end{Proposition}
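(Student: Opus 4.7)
The statement packages the standard sup-convolution machinery, adapted to a convolution in $t$ alone and to a merely uniformly continuous $u$. I would organize the argument in three steps.

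For \textbf{(i)}, I fix $(s,t)\in R$ and denote by $r^*=r^*(s,t,\de)$ a maximizer in \eqref{supco}. Comparing with the trial value $r=t$ gives
\[\frac{(r^*-t)^2}{2\de}\leq u(s,r^*)-u(s,t)\leq \om(|r^*-t|),\]
where $\om$ is a uniform continuity modulus of $u$ on $\ov R$. Since $\om$ is subadditive and bounded on compacts, it grows at most linearly, and the inequality $h^2\leq 2\de\,\om(h)$ therefore forces $h=|r^*-t|\leq T_\de$ for some $T_\de=\OO(\sqrt\de)$ independent of $(s,t)$. Plugging back into the definition of $u^\de$ yields
\[0\leq u^\de(s,t)-u(s,t)\leq \om(T_\de),\]
which proves (i) and simultaneously produces the constant $T_\de$ needed in (ii).

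For \textbf{(ii)}, the core observation is that, setting $h=r-t$,
\[u^\de(s,t)=\sup_{h\in [T_0-t,+\infty)}\Bigl(u(s,t+h)-\tfrac{h^2}{2\de}\Bigr).\]
If $t>T_0+T_\de$, Step (i) guarantees that the optimal $h$ lies in $[-T_\de,T_\de]$, so the constraint $h\geq T_0-t$ is never active. For each admissible $h$, the map $(s,t)\mapsto u(s,t+h)-h^2/(2\de)$ is a viscosity subsolution of \eqref{HJloc} in the corresponding translated rectangle, by time--translation invariance of the equation and by the fact that subtracting a constant preserves subsolutions; taking the pointwise supremum preserves the subsolution property, so $u^\de$ is a subsolution in $(a,b)\times(T_0+T_\de,T_1)$.

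The Lipschitz regularity is handled last. In the $t$--variable, rewriting $u^\de(s,t)+t^2/(2\de)$ as a supremum of affine functions in $t$ shows that $u^\de$ is semiconvex in $t$, and at any differentiability point one has $(u^\de)_t(s,t)=(r^*-t)/\de$, which by Step (i) gives the a.e.\ bound $|(u^\de)_t|\leq T_\de/\de=\OO(1/\sqrt\de)$, hence Lipschitz continuity in $t$. Space--Lipschitz continuity is then a consequence of the subsolution property combined with the coercivity assumption \textbf{(H3)}: once $(u^\de)_t$ is bounded, the inequality $H(s,(u^\de)')\leq -(u^\de)_t$ forces $(u^\de)'$ to be bounded, uniformly in $(s,t)$. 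The main subtlety is the need to stay away from the bottom boundary $t=T_0$: for $t$ too close to $T_0$ the optimal $r^*$ may be forced onto $\{T_0\}$, and the translation--invariance trick underlying the subsolution property breaks down; the estimate $|r^*-t|\leq T_\de$ from Step (i) is precisely what quantifies how thin a collar must be excluded.
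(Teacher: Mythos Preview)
Your argument is correct and follows essentially the same route as the paper: bound $|r^*-t|$ by $T_\de=\OO(\sqrt\de)$ via the modulus of continuity, transfer the subsolution property by time--translation, and get Lipschitz regularity from semiconvexity in $t$ plus coercivity in $p$. The only spot where the paper is marginally more careful is the upper boundary: when $T_1<\infty$ and $t_0$ is within $T_\de$ of $T_1$, the maximizer $r^*$ may hit $T_1$, so the translate $u(\cdot,\cdot+h)$ is a subsolution only up to $t_0$ itself; the paper covers this by remarking that the transferred supertangent is then constrained to $\ov R$, using the standard fact that the viscosity subsolution inequality for parabolic equations persists at the terminal time.
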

\begin{proof}  We denote by $\om$ a continuity modulus of $u$ in $R$, and consider two positive constants $a$, $\ell$  with
\[|u(s_i,t_1)- u(s_2,t_2)| \leq a + \ell \, (|s_1-s_2| + |t_1-t_2|) \txt{$(s_i,t_i)$, \,$i=1,2$ \, in $R$.}\]
Given $(s,t) \in R$, we say that $r$ is $u^\de$--optimal for $(s,t)$ if it realizes the maximum in \eqref{supco}. To  estimate $|r-t|$, we start from
\begin{eqnarray*}
  0 &\leq& u^\de(s,t) -u(s,t) = u(s,r)- u(s,t) - \frac 1{2 \de} \, (t-r)^2 \\ &\leq& a + \ell \, |t-r| - \frac 1{2 \de} \, (t-r)^2
\end{eqnarray*}
which implies
\[\frac  1{2 \de} \, (r-t)^2 \leq a + \ell \, |t-r|.\]
We deduce that there exists  $T_\de=\OO(\sqrt \de)$  such that
\begin{equation}\label{proppo1}
 |t-r| \leq T_\de \Txt{for any $(s,t)$, $r$ $u^\de$--optimal for $(s,t)$.}
\end{equation}
We know that $u^\de(s,\cdot)$ is semiconvex and consequently locally Lipschitz continuous, for any $s$. We derive from \eqref{proppo1} that it is  globally Lipschitz continuous in $\R^+$ and the Lipschitz constant is independent of $s$. This property depends on the fact that if $u(s,\cdot)$ is differentiable at $t$ then the $u^\de$--optimal point for $(s,t)$ is  unique and the derivative is given by $\frac {r-t}\de$. We therefore have that the Lipschitz constant of $u^\de(s,\cdot)$ in $[T_0,T_1)$ is  estimated from above by $\frac{T_\de}\de$.  We further derive
\begin{eqnarray*}
  u^\de(s_1,t_1) - u^\de(s_2,t_2)  &=& \big ( u^\de(s_1,t_1) - u^\de(s_1,t_2) \big )+ \big ( u^\de(s_1,t_2) - u^\de(s_2,t_2) \big )\\
  &\leq & \big ( u^\de(s_1,t_1) - u^\de(s_1,t_2) \big ) + \big ( u(s_1,r_1) - u(s_2,r_1) \big )
  \\  &\leq&  \frac{T_\de}\de \, |t_1-t_2| + \om (|s_1-s_2|),
\end{eqnarray*}
where $r_1$ is $u^\de$--optimal for $(s_1,t_1)$, and
\[u^\de(s,t) - u(s,t) \leq u(s,r)-u(s,t) \leq \om(T_\de) \Txt{for any $(s,t)$,}\]
which shows that $u^\de$ is uniformly continuous in $(s,t)$, and item {\bf (i)}. Taking into account that by \eqref{proppo1} $T_0$ cannot be $u^\de$--optimal for $(s,t)$ whenever $t > T_0 + T_\de$, we have  that if $\varphi$ is supertangent to $u^\de$ at a point $(s_0,t_0)$  with $t_0 \in (T_0+T_\de,T_1)$  and $r_0$ is $u^\de$--optimal  for $(s_0,t_0)$ then
\[(s,t) \mapsto \varphi(s,t+ (t_0-r_0))\]
is supertangent to $u$ at $(s_0,r_0)$, constrained to $\ov R$ if $r=T_1$. Therefore, if $u$ is subsolution of \eqref{HJloc} in $R$, then
\[ \varphi_t(s_0,t_0) + H(s_0, \varphi'(s_0,t_0) \leq 0\]
which  shows that $u^\de$ is subsolution of the same equation in $(a,b) \times (T_0+T_\de, T_1)$. Consequently, taking into account that the Hamiltonian is coercive and $u$ is continuous in $(s,t)$ plus Lipschitz continuous in $t$ with Lipschitz constant independent of $s$, $u^\de$ is Lipschitz continuous in $[a,b] \times [T+T_\de,+T_1)$. This shows item {\bf (ii)}, and concludes the proof.

\end{proof}

\bigskip

\section{Some proofs of results in Section \ref{interval}} \label{A}

We recall that $R= (a,b) \times (T_0,T_1)$.

\smallskip

\begin{proof}[{\bf Proof of Theorem \ref{nobarles}}] We consider the $t$--partial sup convolutions of $u$ denoted by $u^\de$. We exploit that $u^\de$ uniformly converges to $u$ in $R$ as $\de \to 0$, see Proposition \ref{propsupco}, and $T_\de \to 0$ as $\de \to 0$, see the statement of Proposition \ref{propsupco} for the definition of $T_\de$. Given an arbitrary  $\eps > 0$, we then have
\begin{equation}\label{nobarles1}
v + \eps > u^\de  \Txt{in $\partial_p R \cup [a,b] \times [T_0,T_0+T_\de]$, for $\de$  small.}
\end{equation}
Let $(s_0,t_0) \in R$, then $(s_0,t_0) \in (a,b) \times (T_0+T_\de,T_1)$ for  $\de$ sufficiently small. Since by Proposition \ref{propsupco} $u^\de$ is a Lipschitz continuous subsolution to \eqref{HJloc} in $(a,b) \times (T_0+T_\de,T_1)$, and by \eqref{nobarles1}
\[ v+ \eps  > u^\de \Txt{in $\partial_p \big ( (a,b) \times (T_0+T_\de,T_1) \big )$, for $\de$ small}\]
we derive from Theorem \ref{barles} that
\[v(s_0,t_0) + \eps \geq u^\de(s_0,t_0) \Txt{for $\de$ small.}\]
Passing at the limit for $\de \to 0$ we then have
\[v(s_0,t_0) + \eps \geq u(s_0,t_0).\]
This proves the assertion for $\eps$, $(s_0,t_0)$  have been arbitrarily chosen.
\end{proof}

\medskip
\begin{proof}[ {\bf Proof of Proposition \ref{hoje}}] Let $(s_0,t_0)$ a differentiability point of $u$ in $R$, then
\[u_t(s_0,t_0) + H(s_0,u'(s_0,t_0) \leq 0,\]
and  $|u_t(s_0,t_0)| \leq M$ by \eqref{hoje1}, consequently
\[ H(s_0,u'(s_0,t_0)) \leq M,\]
since by \eqref{assu} $u_t(s_0,t_0) <0$.
We deduce from the convex character of the Hamiltonian and from  \cite[Proposition 2.3.16]{C1}
\[H(s,p) \leq M \Txt{for any $(s,t) \in R$, $p \in \partial u(t,\cdot)$ at $s$.}\]
We therefore get  \eqref{hoje2} letting $t$ go to the boundary of $\ov{[T_0,T_1)}$ and exploiting  the stability properties of viscosity subsolutions.

\end{proof}

\medskip

 We need introducing some preliminary material before attacking the proof of Proposition \ref{maxistar}.

\smallskip

\begin{Lemma}\label{premaxi} Due to condition \eqref{assu},  every uniformly continuous subsolution of \eqref{HJloc} in $R$ is nonincreasing in $t$.
\end{Lemma}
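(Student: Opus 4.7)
The plan is to reduce to the case of a strictly positive Hamiltonian via the normalization of Remark~\ref{stufa}, and then to exploit the almost-everywhere PDE inequality after regularizing $u$ by $t$--partial sup convolution.

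First I would add a large positive constant $\alpha$ to $H$, producing $\widetilde H:=H+\alpha>0$ on $[a,b]\times\R$; such an $\alpha$ exists because, by coercivity together with continuity of $H$, the map $s\mapsto \min_p H(s,p)$ is attained and continuous, hence bounded below on the compact interval $[a,b]$. By Remark~\ref{stufa}, $u$ is a subsolution of \eqref{HJloc} if and only if $\widetilde u:=u+\alpha\, t$ is a subsolution of the equation with Hamiltonian $\widetilde H$; moreover, $\widetilde u$ being nonincreasing in $t$ is strictly stronger than $u$ being nonincreasing in $t$. It is therefore enough to prove the claim under the extra assumption $H>0$ on $[a,b]\times\R$, in which case coercivity delivers a constant
\[
\kappa\;:=\;\min_{s\in[a,b]}\min_{p\in\R} H(s,p)\;>\;0.
\]

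Next, by Proposition~\ref{propsupco}, for every sufficiently small $\delta>0$ the $t$--partial sup convolution $u^\delta$ is a Lipschitz continuous subsolution of \eqref{HJloc} in $(a,b)\times(T_0+T_\delta,T_1)$ with $T_\delta=\OO(\sqrt\delta)\to 0$. Being Lipschitz and a viscosity subsolution of an equation convex in $p$, $u^\delta$ satisfies the inequality pointwise almost everywhere (this is the classical ingredient already used in the proof of Proposition~\ref{hoje}), whence
\[
u^\delta_t(s,t)\;\le\;-H(s,(u^\delta)'(s,t))\;\le\;-\kappa\qquad\text{for a.e.\ }(s,t)\in(a,b)\times(T_0+T_\delta,T_1).
\]
Combining this with the Lipschitz regularity of $u^\delta(s,\cdot)$ via Fubini, one gets that for a.e.\ $s$ the map $t\mapsto u^\delta(s,t)$ satisfies $u^\delta(s,t_2)-u^\delta(s,t_1)\le -\kappa\,(t_2-t_1)$ whenever $T_0+T_\delta\le t_1<t_2<T_1$, and the bound extends to every $s\in(a,b)$ by continuity of $u^\delta$ in $s$.

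Letting $\delta\to 0$, the uniform convergence $u^\delta\to u$ (Proposition~\ref{propsupco}(i)) and $T_\delta\to 0$ give
\[
u(s,t_2)-u(s,t_1)\;\le\;-\kappa\,(t_2-t_1)\;\le\;0\qquad\text{for every }s\in(a,b),\ T_0<t_1<t_2<T_1,
\]
which is the desired monotonicity. The main obstacle is the step asserting that a Lipschitz viscosity subsolution of a convex Hamilton--Jacobi equation satisfies the differential inequality a.e.\ in the classical sense: this is well known and is precisely what powers Proposition~\ref{hoje}, so once it is invoked the rest of the argument is routine.
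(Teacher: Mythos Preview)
Your reduction step contains a sign error that breaks the argument. By Remark~\ref{stufa}, if $\widetilde H=H+\alpha$, then $w$ is a subsolution of $w_t+\widetilde H(s,w')=0$ if and only if $w+\alpha t$ is a subsolution of the original equation. Hence the function that is a subsolution for $\widetilde H$ is $\widetilde u:=u-\alpha t$, not $u+\alpha t$. You can check this directly: with your choice $\widetilde u=u+\alpha t$ one gets
\[
\widetilde u_t+\widetilde H(s,\widetilde u')=(u_t+\alpha)+\big(H(s,u')+\alpha\big)=\big(u_t+H(s,u')\big)+2\alpha\le 2\alpha,
\]
which is not $\le 0$. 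With the correct transformation $\widetilde u=u-\alpha t$, proving that $\widetilde u$ is nonincreasing only yields $u(s,t_2)-u(s,t_1)\le \alpha(t_2-t_1)$, which says nothing about monotonicity of $u$. So the normalization device cannot be used in this direction to strengthen the positivity hypothesis on $H$; it only goes the other way.

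The remainder of your argument, once you simply \emph{assume} $H>0$ on $[a,b]\times\R$ (and define $\kappa=\min_{s,p}H(s,p)>0$), is essentially the paper's proof: regularize via $t$--partial sup convolution (Proposition~\ref{propsupco}), use that a Lipschitz viscosity subsolution of a convex equation satisfies the inequality a.e.\ to get $u^\delta_t\le -\kappa$, integrate in $t$ for fixed $s$, and pass to the limit $\delta\to 0$ by uniform convergence. The paper phrases the a.e.\ step through the Clarke generalized gradient and \cite[Proposition~2.3.16]{C1}, but the content is the same. Note also that the reference ``see Remark~\ref{stufa}'' immediately following \eqref{assu} at the start of Section~\ref{interval} signals that the paper is already working under the normalization $H>0$, so no reduction is needed in the first place.
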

\begin{proof} Let $u$ be a function as in the statement. We fix   $t_1 > t_2 \in (T_0,T_1)$. We know from  Proposition \ref{propsupco} {\bf (ii)}  that for $\de$ sufficiently small, the $u^\de$'s  are subsolutions of \eqref{HJloc} in $(a,b) \times (T,T_1)$ for some $T <t_2$. Since the Hamiltonian is convex and   $u^\de$ is Lipschitz continuous, we have
\[r + H(s,p) \leq 0 \Txt{for any $(s,t) \in (0,1) \times (T,+\infty)$, any $(p,r) \in \partial u^\de(s,t)$.}\]
We deduce from condition \eqref{assu} and  \cite[Proposition 2.3.16]{C1}
\begin{equation}\label{ciarea}
 u^\de_t(s,t) \leq 0 \Txt{at any point $(s,t)$ where $u$ is $t$--differentiable.}
\end{equation}
We now fix $s$, we derive from \eqref{ciarea} and the fact that $u^\de(s,\cdot)$ is Lipschitz continuous  that
\[u^\de(s,t_1) \geq u^\de(s,t_2)\]
and consequently taking into account that $u^\de$ uniformly converges  to $u$ in $R$
\[u(s,t_1) \geq u(s,t_2).\]
This concludes the proof since $s$, $t_1$, $t_2$ have been arbitrarily chosen
\end{proof}

\smallskip

\begin{Lemma}\label{stephan} Let $\A$ be a family of uniformly continuous functions from $R$ to $\R$ locally equibounded, with a  common continuity modulus $\om$, and closed in the local uniform topology, then
\[u(s,t)= \sup \{v(s,t) \mid v \in \A\}\]
is uniformly  continuous with continuity modulus  $\om$.
\end{Lemma}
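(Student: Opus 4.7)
The plan is to prove the modulus inequality
\[ |u(s_1,t_1) - u(s_2,t_2)| \leq \om(|s_1-s_2|+|t_1-t_2|) \]
for all $(s_1,t_1),(s_2,t_2) \in R$ directly from the definition of $u$ as a pointwise supremum, exploiting that every member of $\A$ already satisfies this same inequality.

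First I would fix two arbitrary points $(s_1,t_1),(s_2,t_2) \in R$. Local equiboundedness of $\A$ ensures that $u$ is finite at these points (so the supremum is a real number, not $+\infty$). For any $v \in \A$, the assumption on the common continuity modulus gives
\[ v(s_1,t_1) \leq v(s_2,t_2) + \om(|s_1-s_2|+|t_1-t_2|) \leq u(s_2,t_2) + \om(|s_1-s_2|+|t_1-t_2|), \]
where the second step uses $v(s_2,t_2) \leq u(s_2,t_2)$ by the very definition of $u$.

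Next I would take the supremum over $v \in \A$ on the left-hand side to obtain
\[ u(s_1,t_1) \leq u(s_2,t_2) + \om(|s_1-s_2|+|t_1-t_2|). \]
Swapping the roles of $(s_1,t_1)$ and $(s_2,t_2)$ produces the reverse inequality, and together these yield $|u(s_1,t_1)-u(s_2,t_2)|\leq \om(|s_1-s_2|+|t_1-t_2|)$, which is exactly the desired conclusion.

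There is no substantive obstacle here: the statement is the standard observation that a pointwise supremum of a family sharing a common continuity modulus inherits the modulus, and the argument is a one-line manipulation of the sup. The closedness of $\A$ in the local uniform topology plays no role in the proof of uniform continuity itself; presumably it is included because it is used elsewhere, e.g.\ to guarantee that $u$ itself lies in $\A$.
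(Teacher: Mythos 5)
Your proof is correct, and it takes a genuinely different and more elementary route than the paper's. The paper first shows that the supremum is \emph{attained}: for each $(s,t)$ it picks a maximizing sequence $v_n\in\A$, invokes Ascoli's theorem together with the closedness of $\A$ in the local uniform topology to extract a limit $v\in\A$ with $u(s,t)=v(s,t)$, and only then derives the modulus estimate by comparing the two attaining elements at the two points. Your argument bypasses attainment entirely: the inequality $v(s_1,t_1)\leq v(s_2,t_2)+\om(|s_1-s_2|+|t_1-t_2|)\leq u(s_2,t_2)+\om(|s_1-s_2|+|t_1-t_2|)$ holds for every $v\in\A$, and taking the supremum over $v$ on the left gives the one-sided bound, with symmetry finishing the job. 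Your closing remark is accurate: closedness (and equiboundedness beyond guaranteeing finiteness of the sup) is not needed for the modulus statement itself; the paper's heavier argument buys the extra fact that the sup is pointwise realized by a member of $\A$, which is not part of the lemma's conclusion. The only point worth making explicit is that $\A$ must be nonempty and the sup finite (from local equiboundedness) so the manipulation is with real numbers, which you do note.
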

\begin{proof} Given $(s,t) \in R$, we consider a sequence $v_n$ contained in $\A$ with
\[v_n(s,t) \to u(s,t).\]
By applying Ascoli theorem, we see that the $v_n$ locally uniformly converges, up to subsequences, to some $v \in \A$. We conclude that, given $(s,t) \in R$
\[u(s,t)= v(s,t) \Txt{for some $v \in \A$.}\]
Given $(s_i,t_i) \in R$, $i=1,2$, we denote by $v_i$ the functions of $\A$ satisfying the above property for $(s_i,t_i)$. We have
\[u(s_1,t_1) - u(s_2,t_2) \leq u_1(s_1,t_1) - u_1(s_2,t_2) \leq \om (|s_1-s_2| + |t_1-t_2|)\]
and
\[u(s_1,t_1) - u(s_2,t_2) \geq u_2(s_1,t_1) - u_2(s_2,t_2) \geq - \om  (|s_1-s_2| + |t_1-t_2|).\]
This shows the assertion.
\end{proof}

\medskip

\medskip
\begin{proof}[ {\bf Proof of Proposition \ref{maxistar}}]  Let $\widetilde u$ be a function in the set appearing in \eqref{maxistar1}, we denote by $\om$ its uniform continuity modulus.
We define
\[\widetilde {\mathcal S} =  \{u \;\hbox{un. cont. subsol of \eqref{HJloc} in  $R$ with cont. modulus $\om + M_0 r$},\, u \leq w_0 \;\hbox{on $\partial_p^-R$}\},\]
and
\[\widetilde v(s,t)= \sup \{u(s,t) \mid u \in \widetilde {\mathcal S}\}.\]
Since  all uniformly continuous subsolutions of \eqref{HJloc} are nonincreasing by Lemma \ref{premaxi}, and    the function $ w_0(s,T_0)-M_0 (t-T_0)$ belongs to $\widetilde{\mathcal S}$, $\widetilde v$  is not affected if we assume the $u$'s  of $\widetilde {\mathcal S}$ to satisfy in addition \[ w_0(s,T_0)  - M_0 \, (t-T_0) \leq u(s,t) \leq w_0(s, T_0)  \qquad\hbox{for $(s,t) \in R$}\]
Therefore the functions of $\widetilde {\mathcal S}$ have a common uniformity modulus and are locally equibounded. We derive  that $\widetilde v$ is uniformly continuous with continuity modulus $\om + M_0 r$ by Lemma \ref{stephan}, and subsolution to \eqref{HJloc} by basic  properties of viscosity solutions theory. We further have $\widetilde v(\cdot,T_0)=w_0(\cdot,T_0)$.

We fix  a time $h >0$ with $T_0+h <T_1$, and consider the family of functions $\overline{\mathcal S}$ defined as
\[  \{u(s,t) \;\hbox{un. cont. subsol of \eqref{HJloc} with cont. modulus $\om + M_0 r$}, \;  u \leq w_0-M_0 \, h \;  \hbox{on $\partial_p^-R$}\}, \]
it is clear that
\begin{equation}\label{faber111}
 \widetilde  v(s,t) - M_0\,h = \sup \{u(s,t) \mid u \in \overline{\mathcal S}\}.
\end{equation}
We  consider an   $u \in \overline{\mathcal S}$  coinciding with  $w_0-M_0 \, h$ in $[a,b] \times \{T_0\}$, and  define
\[\overline u(s,t) = \left \{ \begin{array}{cc}
   w_0(s,T_0) - M_0 \, (t -T_0) & \quad s \in [a,b],\, t \in [T_0,T_0+h) \\
    u(s,t-h) & \quad s \in [a,b],\,t \in [T_0+h, T_1)
  \end{array} \right . \]
  We have
  \begin{eqnarray*}
    \ov u(s,t_0) &=& w_0(s,t_0) \Txt{for $s \in [a,b]$} \\
    \ov u(a,t) &=& w_0(a,t) - M_0 \, (t-T_0) \leq w_0(a,t) \txt{for $t \in [T_0,T_0+h]$}\\
    \ov u(a,t) &=&  u(a,t-h) \leq  w_0(a,t-h) - M_0 \, h \leq w_0(a,t) \txt{for $t \in [T_0+h,T_1)$,}
  \end{eqnarray*}
 in addition $\overline u$ is subsolution of \eqref{HJloc}  by Proposition \ref{melaton}, and  has continuity modulus $\om + M_0 r$. Note that for this last property we have used that it is Lipschitz continuous with Lipschitz constant $M_0$ in $[a,b] \times [t_0,t_0+h]$.   We conclude that $\overline u$  belongs to $\widetilde {\mathcal S}$, so that
 \begin{equation}\label{faber333}
\widetilde v(s,t+h) \geq \overline u(s,t+h) = u(s,t) \Txt{ for any $s \in [a,b]$, $t \in  [T_0,T_1-h)$.}
 \end{equation}
Since  $u$ has been arbitrarily taken in $\overline{\mathcal S}$, the prescription of the value on $[a,b] \times \{t_0\}$ being  not a real restriction,  we  derive  from \eqref{faber111},  \eqref{faber333} that
\[\widetilde v(s,t+h) \geq \widetilde v(s,t) -  M_0 \, h    \Txt{in $[a,b] \times [T_0,T_1-h)$.}\]
  Taking into account that $\widetilde v$ is nonincreasing in $t$, we finally get
  \[|\widetilde v(s,t+h) - \widetilde  v(s,t)| \leq M_0 \, h \Txt{for any $s$, $t$,}\]
which implies, $h$ being arbitrary, that $\widetilde v(s,\cdot)$ is Lipschitz continuous with Lipschitz constant $M_0$ for any $s$.
  Taking into account  the coercivity of $H$ and that $\widetilde v$ is subsolution to \eqref{HJloc}, we derive that $(\widetilde v', \widetilde v_t)$ is bounded in the viscosity sense in $R$ so that $\widetilde v$  is Lipschitz continuous  and the estimates \eqref{trump1bis}, \eqref{trump2bis} holds true with $\widetilde v$ in place of $v$. We define
\[ \mathcal S=  \{u(s,t) \mid u \;\hbox{Lip.  subsol. of \eqref{HJloc} in  $R$ satisfying \eqref{trump1bis}, \eqref{trump2bis}},\, \, u \leq w_0 \;\hbox{in $\partial_p^- R$}\}\]
Since $\widetilde u$ has been arbitrarily taken in the family of functions defining $v$, $\widetilde u \leq \widetilde v$ and $\widetilde v \in \mathcal S$, we have that
\[v(s,t) = \sup\{u(s,t) \mid u \in \mathcal S\}.\]
Arguing as in the first part of the proof, we see that $v$ is subsolution of \eqref{HJloc},   that it is Lipschitz continuous satisfying \eqref{trump1bis}, \eqref{trump2bis} and  coincides with $w_0$ on $[a,b] \times \{T_0\}$.
Finally, by exploiting its maximality, we show, via Perron--Ishii method, that $v$  is solution to \eqref{HJloc}.
\end{proof}

\medskip

\begin{proof}[\bf{Proof of Lemma \ref{hopf}}] \,
  We denote by $\om$ a continuity modulus for $u$ in $C$ that can be taken in the form
\[\om(r)= \inf_k \{a_k + \ell_k \, r\} \Txt{with  $a_k$, $\ell_k$ positive, $a_k \to 0$, for $r\geq 0$}.\]
We fix $k_0 \in \N$ and set $a=a_{k_0}$, $\ell= \ell_{k_0}$, so that
      \[|u(s_1,t_1)-u(s_2,t_2)| \leq a + \ell \, (|s_1-s_2| + |t_1-t_2|).\]
This implies that there  are maximizers/minimizers  in the formulae yielding $u^{[n]}$, $u_{[n]}$ for $n > \ell$. Let $(s_1,t_1)$, $(s_2,t_2)$ with $u^{[n]}(s_1,t_1) \geq u^{[n]}(s_2,t_2)$,  $n > \ell$, we denote by $(z_1,t_1)$ a maximizer for $u^{[n]}(s_1,t_1)$. We have
      \[u^{[n]}(s_1,t_1) -u^{[n]}(s_2,t_2) \leq n \, (|s_1-z_1| + |t_1-r_1|- |s_2-z_1|- |t_2-r_1| )\]
which shows item {\bf (i)} for $u^{[n]}$, the same argument applies to $u_{[n]}$.  Given $(s,t)$, if $(z_n,r_n)$ is a point realizing the maximum  for $u^{[n]}(s,t)$, $n > \ell$,  then
\[ |s-z_n| + |t-r_n| \leq \frac 1n \, (u(z_n,r_n) - u(s,t)) \leq \frac an + \frac \ell n \, (|s-z_n| + |t-r_n|)\]
which implies
\[|s-z_n| + |t-r_n| =  \OO(1/n).\]
We then have
\[u^{[n]}(s,t) - u(s,t) \leq u(z_n,r_n) - u(s,t) \leq \om(\OO(1/n))\]
and consequently $u^{[n]}$ uniformly converge to $u$ in $\ov Q$. The same property  holds for  $u_{[n]}$. We see, in addition, that $|u^{[n]}-u|_\infty$ ($|u_{[n]}-u|_\infty$) solely depends on the continuity modulus of $u$.
\end{proof}

\medskip
\begin{proof}[\bf{Proof of Lemma \ref{catania1}}] \, We denote by $u$, $v$ two Lipschitz continuous subsolutions and by $w$ their minimum. Let $(s_0,t_0)$ be a differentiability point of $w$ and assume that $w(s_0,t_0) = u(s_0,t_0)$, then
\[(w_t(s_0,t_0), Dw(s_0,t_0)) \in D^- u(s_0,t_0) \subset \partial u(s_0,t_0),\]
where $D^-$ denotes the viscosity subdifferential,  and by the convexity of $H$
\[w_t(s_0,t_0) + H(s_0,D w(s_0,t_0)) \leq 0.\]
This shows that $w$ is a.e. subsolution which is equivalent, thanks  again to the convexity of $H$, of being a viscosity subsolution.
\end{proof}

 \bigskip

\color{black}

\bigskip

\vspace{10 pt}

\end{appendix}
\end{document}